\newtheorem{Proposition}[theorem]{Proposition}
\numberwithin{definition}{section}
\numberwithin{theorem}{section}
\numberwithin{corollary}{section}
\numberwithin{proposition}{section}
\numberwithin{lemma}{section}
\numberwithin{equation}{section}
\numberwithin{remark}{section}
\numberwithin{example}{section}
\def\RR{\mathbb{R}}       
\def\Rd{\mathbb{R}^d}     
\def\vx{\boldsymbol{x}}   
\def\vy{\boldsymbol{y}}   
\def\vc{\boldsymbol{c}}
\def\NN{\mathbb{N}}       
\def\ud{\mathrm{d}}       
\def\Schwartz{\mathcal{S}} 
\newcommand{\norm}[1]{\lVert#1\rVert}  
\newcommand{\abs}[1]{\lvert#1\rvert}   
\newcommand{\Matlab}{{\sc Matlab}}
\def\vP{\mathbf{P}} 
\def\vB{\mathbf{B}} 
\def\veta{\boldsymbol{\eta}}
\def\vGamma{\boldsymbol{\Gamma}}
\def\vGamma{\mathbf{\Gamma}}
\def\v0{\boldsymbol{0}}
\def\DOmega{\mathscr{D}(\Omega)}           
\def\LlocOmega{\mathrm{L}_1^{loc}(\Omega)} 
\def\Cont{\mathrm{C}}
\def\Leb{\mathrm{L}}
\def\Order{\mathcal{O}}  
\def\Null{\mathcal{N}ull}
\def\Hil{\mathcal{H}}
\def\Hilbert{\mathrm{H}}
\def\HpOmega{\mathrm{H}_{\mathbf{P}}^0(\Omega)}
\def\HbAOmega{\mathrm{H}_{\mathbf{B}}^{\mathscr{A}}(\Omega)}
\def\HpbAOmega{\mathrm{H}^{\mathscr{A}}_{\mathbf{P}\mathbf{B}}(\Omega)}
\def\Pm{\mathscr{P}^m_{\Omega}}
\def\Bm{\mathscr{B}^m_{\Omega}}
\def\np{n_p}
\def\nb{n_b}
\def\na{n_a}
\def\Integral{\mathcal{I}}
\def\Span{\mathrm{span}}
\def\Range{\mathcal{R}ange}
\def\Aset{\mathscr{A}}
\def\Rset{\mathscr{R}}
\def\Eset{\mathscr{E}}
\def\dualDOmega{\mathscr{D}'(\Omega)}
\begin{document}


\title{Reproducing Kernels of Sobolev Spaces via a Green Kernel Approach with Differential Operators \& Boundary Operators}

\titlerunning{Green Kernels and Reproducing Kernels with Differential and Boundary Operators}


\author{
Gregory E. Fasshauer
\and
Qi Ye
}

\institute{Gregory E. Fasshauer \at
              Department of Applied Mathematics, Illinois Institute of Technology, Chicago IL 60616 USA\\
              \email{fasshauer@iit.edu}
           \and
           Qi Ye \at
              Department of Applied Mathematics, Illinois Institute of Technology, Chicago IL 60616 USA\\
              Tel.: +1-312-451-7342\\
              Fax: +1-312-567-3135\\
              \email{qye3@iit.edu}
}

\date{}

\maketitle


\begin{abstract}

We introduce a vector differential operator $\vP$ and a vector boundary operator $\vB$ to derive a reproducing kernel along with its associated Hilbert space which is shown to be embedded in a classical Sobolev space. This reproducing kernel is a Green kernel of differential operator $L:=\vP^{\ast T}\vP$ with homogeneous or nonhomogeneous boundary conditions given by $\vB$, where we ensure that the distributional adjoint operator $\vP^{\ast}$ of $\vP$ is well-defined in the distributional sense. We represent the inner product of the reproducing-kernel Hilbert space in terms of the operators $\vP$ and $\vB$. In addition, we find relationships for the eigenfunctions and eigenvalues of the reproducing kernel and the operators with homogeneous or nonhomogeneous boundary conditions. These eigenfunctions and eigenvalues are used to compute a series expansion of the reproducing kernel and an orthonormal basis of the reproducing-kernel Hilbert space. Our theoretical results provide perhaps a more intuitive way of understanding what kind of functions are well approximated by the reproducing kernel-based interpolant to a given multivariate data sample.

\keywords{
Green kernel \and  reproducing kernel \and differential operator \and boundary operator \and eigenfunction \and eigenvalue
}
\subclass{MSC 41A30 \and MSC 65D05}
\end{abstract}



\section{Introduction}

The reproducing-kernel Hilbert space construction associates a positive definite kernel with a Hilbert space of functions often referred to as the native space of the kernel. This construction can be used to deal with the problem of reconstructing an unknown function which lies in the reproducing-kernel Hilbert space from a given multivariate data sample (see~\cite{Fas07,Wen05}) in an ``optimal'' way. Here this optimality can be quantified in terms of the norm induced by the Hilbert space inner product. It is therefore of importance to understand these spaces (and their inner products) as well as possible since such an understanding will provide us with insight into the ``correct'' choice of kernel for any given application. Potential applications of kernel approximation methods can be found in an increasingly wider array of topics of which we mention only scattered data approximation~\cite{Buh03,DevRon10,Fas07,Sch08,Wen05}, numerical solution of partial differential equations \cite{Fas07,FlyWri09,ForPir08,Kan90,LarFor03,Sch07,Wen05}, statistical learning~\cite{BerThAg04,SteChr08,Wah92} and engineering design~\cite{ForSobKea08}. Future applications may see the combination of meshfree approximation methods and stochastic Kriging methods used within a common reproducing kernel framework to approximate the numerical solution of stochastic partial differential equations (see, e.g., \cite{FasYe10SPDE}).

However, kernel approximation methods still face quite a few difficulties and challenges. Two important questions in need of a satisfactory answer are: \emph{What kind of functions belong to
a given reproducing-kernel Hilbert space?} and \emph{Which kernel
function should we utilize for a particular application?} Our recent paper~\cite{FasYe10} establishes what kind of (full-space) Green function is a (conditionally) positive definite function and then shows how to embed its related reproducing kernel Hilbert space (or native space) into a generalized Sobolev space defined by a vector distributional operator $\vP=(P_1,\cdots,P_n,\cdots)^T$. This construction results in an arguably more intuitive interpretation of the reproducing kernel Hilbert space associated with any given kernel. In some cases these two spaces are even shown to be equivalent. Our theoretical results produce a rule that allows us to determine which Green function can be used to approximate (well) an unknown smooth function. Conversely, we can use a Green function to formulate an interpolant for a corresponding class of smooth functions. The framework discussed in our earlier paper was restricted to full-space Green functions defined on the whole space $\Rd$, i.e., without taking into consideration the effect of boundary conditions. In the present paper we will show that the Green kernel derived using boundary conditions in a regular bounded open domain $\Omega\subset\Rd$ is a reproducing kernel and that its reproducing kernel Hilbert space is embedded in a classical Sobolev space.
We begin by precisely defining what we mean in this paper by a function space being embedded in or being isomorphic to another space.
\begin{definition}[{\cite[Definition~1.25]{AdaFou03}}]\label{d:Imbedding}
We say the normed space $\Hilbert$ is \emph{embedded in} the normed
space $\Hil$ if $\Hilbert$ is a subspace of $\Hil$ and the identity
operator $I:\Hilbert\rightarrow\Hil$ is a bounded (continuous)
operator, i.e., there is a positive constant $C$ such that
$\norm{f}_{\Hil}\leq C\norm{f}_{\Hilbert}$ for each
$f\in\Hilbert\subseteq\Hil$. In particular, if $\Hil$ is also
embedded in $\Hilbert$ then we say that $\Hilbert$ and $\Hil$ are
\emph{isomorphic}, i.e., $\Hilbert\cong\Hil$.
\end{definition}
%
\begin{remark}
Here \emph{equality} of two function spaces, $\Hilbert=\Hil$, means that
$\Hilbert\subseteq\Hil$ and $\Hil\subseteq\Hilbert$ only, i.e., we do not compare their norms.
Unless specifically indicated otherwise, all functions discussed
in this article are real-valued.
\end{remark}

We now present a standard Green kernel example from the theory of partial differential equations (see \cite[Chapter~2.2]{Eva98}) to set the stage for our discussions later on.
In order to solve Poisson's equation in the $d$-dimensional ($d \geq 2$) open unit ball $\Omega=B(0,1)=\{\vx\in\Rd:\norm{\vx}_2<1\}$ with (homogeneous) Dirichlet boundary condition, one constructs the Green kernel
\[
G(\vx,\vy)=\phi(\vx-\vy)-\phi(\norm{\vx}_2\vy-\vx),\quad{}\vx,\vy\in\Omega,
\]
of the Laplace operator $L=-\Delta=-\sum_{j=1}^{d}\frac{\partial^2}{\partial x_j^2}$ subject to the given boundary condition, i.e., for each fixed $\vy\in \Omega$, we have $G(\cdot,\vy)\in\Hil^1(\Omega)$ (see Section~\ref{s:DiffAdjoint} below for the definition of the classical $\Leb_2$-based Sobolev spaces $\Hil^m(\Omega)$) and
\[
\begin{cases}
L G(\cdot,\vy)=\delta_{\vy},&\text{in }\Omega,\\
~~G(\cdot,\vy)=0,&\text{on }\partial\Omega,
\end{cases}
\]
where $\phi$ is the fundamental solution of $-\Delta$ given by
\[
\phi(\vx)=
\begin{cases}
-\frac{1}{2\pi}\log\norm{\vx}_2,&d=2,\\
\frac{\Gamma(d/2+1)}{d(d-2)\pi^{d/2}}\norm{\vx}_2^{2-d},&d\geq 3.
\end{cases}
\]
Just as in our discussion below, the Laplace operator $L=-\Delta=\vP^{\ast T}\vP=-\nabla^T\nabla$ can be computed using the gradient $\vP=(P_1,\cdots,P_d)^T=\nabla=(\frac{\partial}{\partial x_1},\cdots,\frac{\partial}{\partial x_d})^T$ and its adjoint $\vP^{\ast}=(P^{\ast}_1,\cdots,P^{\ast}_d)^T=-\nabla$. With the help of Green's formulas~\cite{Eva98} we can further check that the kernel $G$ satisfies a reproducing property with respect to the gradient-semi-inner product, i.e., for all $f\in\Cont^1_0(\overline{\Omega})$ and $\vy\in \Omega$, we have
\[
(G(\cdot,\vy),f)_{\nabla,\Omega} =\int_{\Omega}\vP G(\vx,\vy)^T \vP f(\vx)\ud\vx = \sum_{j=1}^{d}\int_{\Omega}\frac{\partial}{\partial x_j}G(\vx,\vy)\frac{\partial}{\partial x_j}f(\vx)\ud\vx
=f(\vy).
\]
However, this Green kernel $G$ is not a reproducing kernel (cf.~Definition~\ref{d:RKHS}) because $G$ is singular along its diagonal, i.e., $G(\vx,\vx)=\infty$ for each $\vx\in\Omega$.

Therefore, it is our goal to show what kind of Green kernel is a reproducing kernel while maintaining a similar concept for the reproducing property.
Our Green kernel will be associated with a differential operator $L$ with homogeneous or nonhomogeneous boundary conditions (see Definition~\ref{d:Green-Nonhomo}), and the inner product of its reproducing-kernel Hilbert space will be represented through a vector differential operator $\vP=(P_1,\cdots,P_{\np})^T$ and a vector boundary operator $\vB=(B_1,\cdots,B_{\nb})^T$, where the differential operators $P_j:\Hil^m(\Omega)\rightarrow\Leb_2(\Omega)$ and the boundary operators $B_j:\Hil^m(\Omega)\rightarrow\Leb_2(\partial\Omega)$ are bounded linear operators which are defined and discussed
in Section~\ref{s:DiffBound}.

Because the Dirac delta function $\delta_{\vy}$ is a tempered distribution in the dual space $\dualDOmega$ of the test function space $\DOmega$ (see Section~\ref{s:DiffAdjoint}) we shall extend the differential operators and their adjoint operators to distributional operators from $\dualDOmega$ into $\dualDOmega$. Thus the differential operator $L$ can be represented by the vector differential operator $\vP$ and its distributional adjoint operator $\vP^{\ast}$ via the formula $L=\vP^{\ast T}\vP=\sum_{j=1}^{\nb}P^{\ast}_jP_j$. In this article, a differential operator $P$, its distributional adjoint operator $P^{\ast}$ and a boundary operator $B$ are assumed to be linear with non-constant coefficients, i.e.,
\[
P=\sum_{\abs{\alpha}\leq m}\rho_{\alpha}\circ
D^{\alpha},\quad
P^{\ast}=\sum_{\abs{\alpha}\leq
m}(-1)^{\abs{\alpha}}D^{\alpha}\circ\rho_{\alpha},\quad
B=\sum_{\abs{\beta}\leq
m-1}b_{\beta}\circ D^{\beta}|_{\partial\Omega},
\]
where $\rho_{\alpha}\in\Cont^{\infty}(\overline{\Omega})$, $b_{\beta}\in\Cont(\partial\Omega)$ and $\alpha,\beta\in\NN_0^d$ (see Definition~\ref{d:diff} and~\ref{d:bound}).

Based on this construction we can establish a direct connection between Green kernels and reproducing kernels. We are also able to show how to use the differential operator $\vP$ and boundary operator $\vB$ to set up reproducing kernel Hilbert spaces which are embedded in classical Sobolev spaces (see Section~\ref{s:RKHS_Sob}).
For example, Theorems~\ref{t:HpbA}, Corollary~\ref{c:HpbAOmega-Null} and Theorem~\ref{t:RKHS-HpbAOmega} allow us to arrive at a theorem such as
\begin{theorem}\label{t:finite-dim-RKHS}
Let $\Omega\subset\Rd$ be a regular bounded open domain and introduce the vector differential operator $\vP=(P_1,\cdots,P_{\np})^T\in\Pm$ and vector boundary operator $\vB=(B_1,\cdots,B_{\nb})^T\in\Bm$, where $m>d/2$ and $m\in\NN$. Suppose that there is a Green kernel $G$ of $L=\vP^{\ast T}\vP=\sum_{j=1}^{\np}P^{\ast}_jP_j$ with homogeneous boundary conditions given by $\vB$, i.e., for each fixed $\vy\in\Omega$, we have $G(\cdot,\vy)\in\Hil^m(\Omega)$ and
\[
\begin{cases}
LG(\cdot,\vy)=\delta_{\vy},&\text{in }\Omega,\\
\vB G(\cdot,\vy)=\v0,&\text{on }\partial\Omega.
\end{cases}
\]
If the null space $\Null(\vP):=\{f\in\Hil^m(\Omega):\vP f=\v0\}$ is a finite-dimensional space, then the direct sum space
\[
\HpbAOmega=\HpOmega\oplus\HbAOmega
=\left\{f=f_P+f_B:\vB f_P=\v0,~\vP f_B=\v0,\text{ where }f_P,f_B\in\Hil^m(\Omega)\right\}
\]
equipped with the inner product
\[
(f,g)_{\HpbAOmega}=\sum_{j=1}^{\np}\int_{\Omega}P_jf(\vx)P_jg(\vx)\ud\vx
+\sum_{j=1}^{\nb}\int_{\partial\Omega}B_jf(\vx)B_jg(\vx)\ud S(\vx),\quad f,g\in\HpbAOmega,
\]
is a reproducing-kernel Hilbert space whose reproducing kernel is a Green kernel $K$ of $L$ with boundary conditions given by $\vB$ and
$\{\vGamma(\cdot,\vy):\vy\in\Omega\}\subseteq\otimes_{j=1}^{\nb}\Leb_2(\partial\Omega)$, i.e., for each fixed $\vy\in\Omega$, we have $K(\cdot,\vy)\in\Hil^m(\Omega)$ and
\[
\begin{cases}
LK(\cdot,\vy)=\delta_{\vy},&\text{in }\Omega,\\
\vB K(\cdot,\vy)=\vGamma(\cdot,\vy),&\text{on }\partial\Omega,
\end{cases}
\]
where the boundary conditions also satisfy $\{\vGamma(\vx,\cdot):\vx\in\partial\Omega\}\subseteq\otimes_{j=1}^{\nb}\Null(\vP)$.
Moreover, the reproducing-kernel Hilbert space $\HpbAOmega$ is embedded in the Sobolev space $\Hil^m(\Omega)$ and the reproducing kernel $K$ can be written in the explicit form
\[
K(\vx,\vy)=G(\vx,\vy)+\sum_{k=1}^{\na}\psi_k(\vx)\psi_k(\vy),\quad{}\vx,\vy\in\Omega,
\]
where $\{\psi_k\}_{k=1}^{\na}$ is an orthonormal basis of $\Null(\vP)$ with respect to the $\vB$-semi-inner product.
(Here the classes $\Pm$ and $\Bm$ are defined in Section~\ref{s:DiffBound}.)
\end{theorem}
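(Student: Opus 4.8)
\emph{Proof plan.} The plan is to assemble Theorem~\ref{t:HpbA}, Corollary~\ref{c:HpbAOmega-Null} and Theorem~\ref{t:RKHS-HpbAOmega}; beyond these, the only work is to exploit the orthogonal direct-sum structure together with two elementary reproducing-kernel facts. First, since $m>d/2$, the Sobolev embedding theorem gives $\Hil^m(\Omega)\hookrightarrow\Cont(\overline{\Omega})$, so the point evaluations $\delta_{\vy}$, $\vy\in\Omega$, are bounded linear functionals on $\Hil^m(\Omega)$, hence on any space that embeds continuously into $\Hil^m(\Omega)$. By Theorem~\ref{t:HpbA} together with Corollary~\ref{c:HpbAOmega-Null} (which applies because $\Null(\vP)$ is finite-dimensional), $\HpbAOmega=\HpOmega\oplus\HbAOmega$ with the stated inner product is a Hilbert space embedded in $\Hil^m(\Omega)$; combined with the previous sentence, every $\delta_{\vy}$ is continuous on $\HpbAOmega$, so by the Riesz representation theorem $\HpbAOmega$ is a reproducing-kernel Hilbert space, and the asserted embedding in $\Hil^m(\Omega)$ is part of Theorem~\ref{t:HpbA}.

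Next I would compute the reproducing kernel using the fact that the decomposition $\HpbAOmega=\HpOmega\oplus\HbAOmega$ is \emph{orthogonal}: if $f_P\in\HpOmega$ then $\vB f_P=\v0$, and if $f_B\in\HbAOmega\subseteq\Null(\vP)$ then $\vP f_B=\v0$, so in $(f_P,f_B)_{\HpbAOmega}$ the $\vP$-term vanishes (by $\vP f_B=\v0$) and the $\vB$-term vanishes (by $\vB f_P=\v0$). Consequently the reproducing kernel of $\HpbAOmega$ is the sum of the reproducing kernels of the two summands (for an orthogonal direct sum one checks directly that $K_1(\cdot,\vy)+K_2(\cdot,\vy)$ reproduces, the cross terms dropping out). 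For the finite-dimensional summand $\HbAOmega=\Null(\vP)$, on which the $\vB$-semi-inner product is a genuine inner product (by the definition of $\Bm$ and Corollary~\ref{c:HpbAOmega-Null}) and for any $\vB$-orthonormal basis $\{\psi_k\}_{k=1}^{\na}$, the expansion $f=\sum_{k=1}^{\na}(f,\psi_k)_{\vB,\Omega}\,\psi_k$ shows that $\sum_{k=1}^{\na}\psi_k(\vx)\psi_k(\vy)$ is its reproducing kernel. For the summand $\HpOmega=\{f_P\in\Hil^m(\Omega):\vB f_P=\v0\}$ with the $\vP$-semi-inner product I would show $G$ is the reproducing kernel: $G(\cdot,\vy)\in\HpOmega$ since $G(\cdot,\vy)\in\Hil^m(\Omega)$ and $\vB G(\cdot,\vy)=\v0$ by hypothesis, and for every $f_P\in\HpOmega$ the generalized Green's formula for $L=\vP^{\ast T}\vP=\sum_{j=1}^{\np}P^{\ast}_jP_j$ turns $\sum_{j=1}^{\np}\int_{\Omega}P_jG(\cdot,\vy)\,P_jf_P\,\ud\vx$ into $\langle LG(\cdot,\vy),f_P\rangle$ plus boundary terms that are bilinear in traces of $G(\cdot,\vy)$ and of $f_P$; since $\vB G(\cdot,\vy)=\v0$ and $\vB f_P=\v0$ these boundary terms vanish, leaving $\langle\delta_{\vy},f_P\rangle=f_P(\vy)$ (well-defined because $f_P\in\Cont(\overline{\Omega})$). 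This step is exactly what Theorem~\ref{t:RKHS-HpbAOmega} supplies, so in practice I would simply invoke it. Adding the two kernels yields $K(\vx,\vy)=G(\vx,\vy)+\sum_{k=1}^{\na}\psi_k(\vx)\psi_k(\vy)$, and uniqueness of the reproducing kernel identifies this with the kernel of $\HpbAOmega$.

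It then remains to verify that this $K$ is a Green kernel of $L$ with the stated data, which is immediate from the explicit form: $K(\cdot,\vy)\in\Hil^m(\Omega)$ because $G(\cdot,\vy),\psi_k\in\Hil^m(\Omega)$; since $\vP\psi_k=\v0$ we get $L\psi_k=\vP^{\ast T}\vP\psi_k=0$, so $LK(\cdot,\vy)=LG(\cdot,\vy)=\delta_{\vy}$; and $\vB K(\cdot,\vy)=\vB G(\cdot,\vy)+\sum_{k=1}^{\na}\psi_k(\vy)\,\vB\psi_k=\sum_{k=1}^{\na}\psi_k(\vy)\,\vB\psi_k=:\vGamma(\cdot,\vy)$, which lies in $\otimes_{j=1}^{\nb}\Leb_2(\partial\Omega)$ since $B_j:\Hil^m(\Omega)\to\Leb_2(\partial\Omega)$ is bounded, while for each fixed $\vx\in\partial\Omega$ we have $\vGamma(\vx,\cdot)=\sum_{k=1}^{\na}(\vB\psi_k)(\vx)\,\psi_k\in\otimes_{j=1}^{\nb}\Null(\vP)$ because each $\psi_k\in\Null(\vP)$. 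The only genuinely non-bookkeeping ingredient is the Green's-formula step in the middle paragraph — checking that the boundary terms produced by integrating $\vP^{\ast T}\vP$ by parts are annihilated by the homogeneous $\vB$-conditions and that $\langle\delta_{\vy},f_P\rangle=f_P(\vy)$ makes sense — but this is already established in Theorem~\ref{t:RKHS-HpbAOmega} and secured by $\Hil^m(\Omega)\hookrightarrow\Cont(\overline{\Omega})$, so the remaining argument is just the collection of terms carried out above.
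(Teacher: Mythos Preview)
Your proposal is correct and mirrors the paper's own strategy: the paper presents Theorem~\ref{t:finite-dim-RKHS} precisely as a corollary of Theorem~\ref{t:HpbA}, Corollary~\ref{c:HpbAOmega-Null}, and Theorem~\ref{t:RKHS-HpbAOmega}, and your additional bookkeeping (verifying $LK(\cdot,\vy)=\delta_{\vy}$, computing $\vGamma$, and checking $\vGamma(\vx,\cdot)\in\otimes_{j=1}^{\nb}\Null(\vP)$) is exactly the routine unpacking of the explicit formula $K=G+\sum_k\psi_k\otimes\psi_k$. One small caution: your informal Green's-formula sketch for the reproducing property of $G$ on $\HpOmega$ would not go through as written, since $G(\cdot,\vy)\in\Hil^m(\Omega)$ only (not $\Hil^{2m}(\Omega)$), so one cannot integrate by parts all the way to a classical pairing $\langle LG(\cdot,\vy),f_P\rangle$; the paper's Theorem~\ref{t:RKHS-HpOmega} (on which Theorem~\ref{t:RKHS-HpbAOmega} rests) instead approximates $f_P\in\HpOmega=\Hil^m_0(\Omega)$ by test functions $\gamma_k\in\DOmega$ and uses the distributional adjoint identity---but since you explicitly defer this step to the cited theorem, your argument is complete.
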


Theorem~\ref{t:finite-dim-RKHS} shows that the vector differential operator $\vP$ and vector boundary operator $\vB$ enable us to verify the reproducing property of the reproducing-kernel Hilbert space. This allows us to show that the Green kernel $K$ becomes a reproducing kernel even with nonhomogeneous boundary conditions, not just for the case of homogeneous boundary conditions.
If $\Null(\vP)\equiv\{0\}$ then $K=G$ has homogeneous boundary conditions
which implies that the reproducing property depends on $\vP$ without having to resort to $\vB$ -- just as we had above for the case of the Poisson Green kernel. We can now reconsider the question of why the Poisson Green kernel above is \emph{not} a reproducing kernel. Essentially this happens because $m=1\leq d/2$ so that the Sobolev embedding theory does not apply. On the other hand, Remark~\ref{r:Green-non} gives us a counter example demonstrating that the Green kernel may not be a reproducing kernel even if it is uniformly continuous in the whole domain.

In Section~\ref{s:RKHS_Sob} we also consider the solution of eigenvalue problems via the method presented in~\cite{AtkHan09},
where the authors discuss how to find the eigenfunctions and eigenvalues
of elliptic partial differential equations of order $2$ with
Dirichlet or Neumann boundary conditions. This will enable us to see the relationships between the eigenfunctions and eigenvalues of Green kernels and those of differential operators $L$
with homogeneous or nonhomogeneous boundary
conditions given by $\vB$. Propositions~\ref{p:W-eig-L-B-eig} and \ref{p:W-eig-L-B-eig-non} allow us to transfer eigenfunctions and eigenvalues from Green kernels to $L$ and vice versa. We also use these eigenfunctions and eigenvalues to obtain the orthonormal basis of the reproducing-kernel
Hilbert space and the explicit expansion of the Green kernel as, e.g., stated in Proposition~\ref{p:L-B-eig-W-eig} and \ref{p:L-B-eig-W-eig-non}.

In Section~\ref{s:exa}, we demonstrate that many well-known reproducing
kernels are also Green kernels. Examples include the min kernel and the univariate Sobolev spline kernel.
We also construct other reproducing kernels that can be used in scattered data interpolation such as a modification of the thin-plate spline.

In this article we limit our discussion of nonhomogeneous boundary conditions to those that are determined by a finite bases. However, all the theoretical results presented here can be extended to much more general nonhomogeneous boundary conditions constructed using a countable basis (see the Ph.D. thesis~\cite{Ye12} of the second author). Such Green kernels $K$ can be seen as a reproducing kernel for the interpolation of multivariate scattered data obtained from an unknown function $f\in\Hil^m(\Omega)$ at data sites $X=\{\vx_j\}_{j=1}^N\subset\Omega$. In a similar fashion as described in \cite{Fas07,SteChr08,Wen05}, we further obtain error bounds and optimal recovery properties for the interpolant $s_{f,X}=\sum_{j=1}^{N}c_jK(\cdot,\vx_j)$ which satisfies the interpolation conditions $s_{f,X}(\vx_j)=f(\vx_j)$ for each $j=1,\cdots,N$.


\section{Positive Definite Kernels and Reproducing-Kernel Hilbert
Space}\label{s:PDK-RKHS}

We now provide a very brief summary of reproducing kernel Hilbert spaces. Much more background information can be found in, e.g., \cite{Wen05}.

\begin{definition}[{\cite[Definition~6.24]{Wen05}}]\label{d:PDK}
Let $\Omega\subseteq\Rd$. A symmetric kernel
$K:\Omega\times\Omega\rightarrow\RR$ is called \emph{positive
definite} if, for all $N\in\NN$, pairwise distinct points
$X:=\{\vx_1,\ldots,\vx_N\}\subset\Omega$, and
$\vc:=(c_1,\ldots,c_N)^T\in\RR^N\setminus\{0\}$ the quadratic form
\[
\sum_{j=1}^{N}\sum_{k=1}^Nc_jc_kK(\vx_j,\vx_k)>0.
\]
If the quadratic form is only nonnegative, then the kernel $K$ is said to be positive semi-definite.
\end{definition}

\begin{definition}[{\cite[Definition~10.1]{Wen05}}]\label{d:RKHS}
Let $\Omega\subseteq\Rd$ and $\Hilbert(\Omega)$ be a real Hilbert
space of functions $f:\Omega\rightarrow\RR$. $\mathrm{H}(\Omega)$ is
called a \emph{reproducing-kernel Hilbert space} with a
\emph{reproducing kernel} $K:\Omega\times\Omega\rightarrow\RR$ if
\[
\begin{split}
&(i)~K(\cdot,\vy)\in\Hilbert(\Omega)\text{ and }
(ii)~f(\vy)=(K(\cdot,\vy),f)_{\Hilbert(\Omega)},\quad \text{for all
}f\in\Hilbert(\Omega)\text{ and each }\vy\in\Omega.
\end{split}
\]
\end{definition}

In order to formulate the following proposition which we will later use to verify some of our results on eigenfunctions and eigenvalues of a Green kernel we first consider a kernel $K\in\Leb_2(\Omega\times\Omega)$ and define an \emph{integral operator}
$\Integral_{K,\Omega}:\Leb_2(\Omega)\rightarrow\Leb_2(\Omega)$
via
\begin{equation}\label{e:IntOp}
(\Integral_{K,\Omega}f)(\vy):=\int_{\Omega}K(\vx,\vy)f(\vx)\ud\vx,\quad{}f\in\Leb_2(\Omega)\text{
and }\vy\in\Omega.
\end{equation}

\begin{Proposition}[{\cite[Proposition~10.28]{Wen05}}]\label{p:RKHS-Ltwo}
Suppose that the reproducing kernel $K\in\Leb_2(\Omega\times\Omega)$ is a symmetric positive definite kernel on the compact set $\Omega\subseteq\Rd$.
Then the integral operator $\Integral_{K,\Omega}$ maps $\Leb_2(\Omega)$ continuously into the reproducing-kernel Hilbert space $\Hilbert(\Omega)$ whose reproducing kernel is $K$. The operator $\Integral_{K,\Omega}$ is the adjoint of the embedding operator of the reproducing-kernel Hilbert space $\Hilbert(\Omega)$ into $\Leb_2(\Omega)$, i.e., it satisfies
\[
\int_{\Omega}f(\vx)g(\vx)\ud\vx=(f,\Integral_{K,\Omega}g)_{\Hilbert(\Omega)},\quad{}
f\in\Hilbert(\Omega)\text{ and }g\in\Leb_2(\Omega).
\]
Moreover, $\Range(\Integral_{K,\Omega})=\{\Integral_{K,\Omega}g:g\in\Leb_2(\Omega)\}$ is dense in $\Hilbert(\Omega)$ with respect to the $\Hilbert(\Omega)$-norm.
\end{Proposition}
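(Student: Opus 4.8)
The plan is to identify the integral operator $\Integral_{K,\Omega}$ with the Hilbert-space adjoint $E^{\ast}$ of the embedding operator $E:\Hilbert(\Omega)\hookrightarrow\Leb_2(\Omega)$, and then to read every assertion of the proposition off this single identification. First I would check that $E$ is \emph{bounded}. Combining the reproducing property in Definition~\ref{d:RKHS}(ii) with the Cauchy--Schwarz inequality in $\Hilbert(\Omega)$ gives the pointwise bound $\abs{f(\vx)}=\abs{(K(\cdot,\vx),f)_{\Hilbert(\Omega)}}\leq\norm{K(\cdot,\vx)}_{\Hilbert(\Omega)}\norm{f}_{\Hilbert(\Omega)}=\sqrt{K(\vx,\vx)}\,\norm{f}_{\Hilbert(\Omega)}$; squaring and integrating over the compact set $\Omega$ then yields $\norm{f}_{\Leb_2(\Omega)}\leq C\norm{f}_{\Hilbert(\Omega)}$ with $C^{2}=\int_{\Omega}K(\vx,\vx)\,\ud\vx<\infty$. (The finiteness here is the one place where good behaviour of $K$ on the compact $\Omega$ is used; for the Green kernels with $m>d/2$ considered later it is automatic, since the Sobolev embedding forces $\Hilbert(\Omega)\subseteq\Cont(\overline{\Omega})$.) Hence $E$ is bounded and its adjoint $E^{\ast}:\Leb_2(\Omega)\to\Hilbert(\Omega)$ is defined.

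Next I would compute $E^{\ast}$ explicitly by means of the Riesz representation theorem. For fixed $g\in\Leb_2(\Omega)$ the functional $f\mapsto\int_{\Omega}f(\vx)g(\vx)\,\ud\vx$ is bounded on $\Hilbert(\Omega)$, by the preceding step together with Cauchy--Schwarz in $\Leb_2(\Omega)$, so there is a unique $F_{g}\in\Hilbert(\Omega)$ with $\int_{\Omega}f(\vx)g(\vx)\,\ud\vx=(f,F_{g})_{\Hilbert(\Omega)}$ for every $f\in\Hilbert(\Omega)$. Evaluating $F_{g}$ pointwise, first by the reproducing property and then by this defining relation applied with $f=K(\cdot,\vy)$, gives
\[
F_{g}(\vy)=(K(\cdot,\vy),F_{g})_{\Hilbert(\Omega)}=\int_{\Omega}K(\vx,\vy)g(\vx)\,\ud\vx=(\Integral_{K,\Omega}g)(\vy),\quad \vy\in\Omega.
\]
This at once shows that $\Integral_{K,\Omega}g\in\Hilbert(\Omega)$, that $\Integral_{K,\Omega}:\Leb_2(\Omega)\to\Hilbert(\Omega)$ is continuous (indeed $\norm{\Integral_{K,\Omega}g}_{\Hilbert(\Omega)}\leq C\norm{g}_{\Leb_2(\Omega)}$), that the stated identity $\int_{\Omega}f(\vx)g(\vx)\,\ud\vx=(f,\Integral_{K,\Omega}g)_{\Hilbert(\Omega)}$ holds for $f\in\Hilbert(\Omega)$ and $g\in\Leb_2(\Omega)$, and therefore that $\Integral_{K,\Omega}$ coincides with $E^{\ast}$.

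The remaining density statement is the step I expect to carry the only real subtlety. For a bounded operator between Hilbert spaces the closure of the range of the adjoint equals the orthogonal complement of the kernel, so inside $\Hilbert(\Omega)$ one has $\overline{\Range(\Integral_{K,\Omega})}=\overline{\Range(E^{\ast})}=\Null(E)^{\perp}$. Thus $\Range(\Integral_{K,\Omega})$ is dense in $\Hilbert(\Omega)$ exactly when $E$ is injective, that is, when every $f\in\Hilbert(\Omega)$ that vanishes almost everywhere on $\Omega$ vanishes identically. Upgrading ``almost everywhere'' to ``everywhere'' for elements of the abstract space $\Hilbert(\Omega)$ is where the nature of $\Omega$ must be invoked: on the sets relevant here (a compact set that is the closure of a bounded open domain, on which nonempty relatively open subsets carry positive measure) together with the continuity of the kernels in question one has $\Hilbert(\Omega)\subseteq\Cont(\Omega)$, so a function in $\Hilbert(\Omega)$ that is zero a.e.\ is zero on a dense subset and hence, by continuity, zero throughout $\Omega$. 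Granting this, $\Null(E)=\{0\}$ and consequently $\overline{\Range(\Integral_{K,\Omega})}=\Hilbert(\Omega)$, which finishes the proof; the residual items --- linearity of $g\mapsto F_{g}$, and the observation that the displayed identity is precisely the defining equation of $E^{\ast}$ --- are routine.
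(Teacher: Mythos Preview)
The paper does not supply its own proof of this proposition; it is quoted from Wendland's monograph (Proposition~10.28 there) and used as a black box in Section~\ref{s:RKHS_Sob}. Your argument is the standard one and is in line with Wendland's: identify $\Integral_{K,\Omega}$ with the Hilbert-space adjoint of the embedding $E:\Hilbert(\Omega)\hookrightarrow\Leb_2(\Omega)$ via the reproducing property, then obtain density from $\overline{\Range(E^{\ast})}=\Null(E)^{\perp}$ together with injectivity of $E$.

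You have correctly isolated the two places where the bare hypothesis $K\in\Leb_2(\Omega\times\Omega)$ is not by itself sufficient --- the finiteness of $\int_{\Omega}K(\vx,\vx)\,\ud\vx$ needed for boundedness of $E$, and the passage from ``$f=0$ a.e.'' to ``$f\equiv 0$'' needed for injectivity of $E$ --- and you rightly observe that both are automatic once $K$ is continuous on the compact set $\overline{\Omega}$, which is the situation in all of the paper's applications (indeed, the paper notes just after Corollary~\ref{c:PDK-HpOmega} that its Green kernels are uniformly continuous). Strictly speaking, then, your proof establishes the proposition under the mild extra standing assumption of a continuous kernel; this matches how the result is stated and used in Wendland, and it covers every instance in the present paper.
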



\section{Differential Operators and Boundary Operators}\label{s:DiffBound}


\subsection{Differential Operators and Distributional Adjoint
Operators}\label{s:DiffAdjoint}

Our following proofs will rely on a number of basic concepts and techniques from the Schwartz theory of distributions (see \cite[Chapter~1.5]{AdaFou03} and \cite[Chapter~1~and~2]{Hor0405}). Of special importance is the notion of a distributional derivative of an integrable function. Distributional derivatives are extensions of the standard partial derivatives
\[
D^{\alpha}:=\prod_{k=1}^{d}\frac{\partial^{\alpha_k}}{\partial
x_k^{\alpha_k}}, \quad{}\abs{\alpha}:=\sum_{k=1}^d\alpha_k,\quad{}
\alpha:=\left(\alpha_1,\cdots,\alpha_d\right)\in\NN_0^d.
\]

Let $\Omega\subset\Rd$ be an open bounded domain (connected subset). We first introduce a test function space $\Cont_0^{\infty}(\Omega)$ which consists of all those functions in $\Cont^{\infty}(\Omega)$ having compact support in $\Omega$. \cite[Chapter~1.5]{AdaFou03} states that the test function space $\Cont_0^{\infty}(\Omega)$ can be given a locally convex topology and thereby becomes a topological vector space called $\DOmega$. Note, however, that $\DOmega$ is not a normable space.

Its dual space $\dualDOmega$ (the space of continuous functionals on $\DOmega$) is referred to as the space of tempered distributions. According to \cite[Chapter~2.1]{Hor0405}, a distribution $T\in\dualDOmega$ is a linear form on $\DOmega$ such that for every compact set $\Lambda\subset\Omega$ there exist a positive constant $C$ and a nonnegative integer $n\in\NN_0$ such that
\[
T(\gamma)\leq C\sum_{\abs{\alpha}\leq n}\sup_{\vx\in\Lambda}\abs{D^{\alpha}\gamma(\vx)},\quad
\text{for each }\gamma\in\Cont_0^{\infty}(\Lambda)\subset\DOmega.
\]
For example, the Dirac delta
function (Dirac delta distribution) $\delta_{\vy}$ concentrated at the
point $\vy\in\Omega$ is an element of $\dualDOmega$, i.e.,
$\langle \delta_{\vy},\gamma\rangle_{\Omega} =\gamma(\vy)$ for each
$\gamma\in\DOmega$.
Our later proofs will make frequent use of the following two bilinear forms. We define a \emph{dual bilinear form}
\[
\langle T,\gamma\rangle_{\Omega}:=T(\gamma),\quad\text{for each
}T\in\dualDOmega \text{ and }\gamma\in\DOmega,
\]
and the usual \emph{integral bilinear form}
\[
(f,g)_{\Omega}:=\int_{\Omega}f(\vx)g(\vx)\ud\vx,\quad
\text{where $fg$ is integrable on $\Omega$}.
\]
\cite[Chapter~1.5]{Hor0405} shows that for each locally integrable function $f\in\LlocOmega$ there exists a unique tempered distribution
$T_f\in\dualDOmega$ that links these two bilinear forms by the Riesz representation theorem, i.e.,
\begin{equation}\label{e:DistNotation}
\langle T_f,\gamma\rangle_{\Omega}
=(f,\gamma)_{\Omega},\quad\text{for each } \gamma\in\DOmega.
\end{equation}
Thus $f\in\LlocOmega$ can be viewed as an element of $\dualDOmega$ and $T_f$ is frequently identified with $f$. This means that $\LlocOmega\subset\dualDOmega$.

Next we extend the standard derivative $D^{\alpha}$ to the notion of a distributional derivative $P^{\alpha}:\dualDOmega\rightarrow\dualDOmega$. This distributional derivative is well defined by
\[
\langle P^{\alpha}T, \gamma \rangle_{\Omega}:=(-1)^{\alpha}\langle T, D^{\alpha}\gamma \rangle,\quad
\text{for each }T\in\dualDOmega\text{ and }\gamma\in\DOmega,
\]
because $D^{\alpha}$ is continuous from $\DOmega$ into $\DOmega$ (see \cite[Definition~3.1.1]{Hor0405}). For convenience $P^{\alpha}$ is also written as $D^{\alpha}$.

Using this notion of distributional derivatives the real \emph{classical $\Leb_2$-based Sobolev space} $\Hil^m(\Omega)$ is
defined by
\[
\Hil^m(\Omega):=\left\{f\in\LlocOmega:D^{\alpha}f\in\Leb_2(\Omega),~\abs{\alpha}\leq
m,~\alpha\in\NN_0^d\right\},\quad{}m\in\NN_0,
\]
equipped with the natural inner product
\[
(f,g)_{m,\Omega}:=\sum_{\abs{\alpha}\leq
m}\int_{\Omega}D^{\alpha}f(\vx)D^{\alpha}g(\vx)\ud\vx,\quad f,g\in\Hil^m(\Omega).
\]
Moreover, the completion of $\Cont_0^m(\Omega)$ with respect to
the $\Hil^m(\Omega)$-norm is denoted by $\Hil^m_0(\Omega)$, i.e.,
$\Hil^m_0(\Omega)$ is the closure of $\Cont_0^{\infty}(\Omega)$ in
$\Hil^m(\Omega)$ as in~\cite{AdaFou03}.

In the literature (see, e.g., \cite{Hor0405}) one also often finds differential operators written in the form $p(\cdot,D)\gamma=\sum_{\abs{\alpha}\leq m}\rho_{\alpha}D^{\alpha}\gamma$, where
$p(\vx,\vy):=\sum_{\abs{\alpha}\leq m}\rho_{\alpha}(\vx)\vy^{\alpha}$ is a polynomial in $\vy\in\Rd$ and $\rho_{\alpha}\in\Cont^{\infty}(\overline{\Omega})$ (uniformly smooth functions space). The formal adjoint operator can be represented as
$p^{\ast}(\cdot,D)\gamma=\sum_{\abs{\alpha}\leq m}(-1)^{\abs{\alpha}}D^{\alpha}(\rho_{\alpha}\gamma)$. If $\rho\in\Cont^{\infty}(\overline{\Omega})$ then it can be seen as a distributional operator $P_{\rho}:\dualDOmega\rightarrow\dualDOmega$, i.e.,
\[
\langle P_{\rho}T,\gamma \rangle:=\langle T,\rho\gamma\rangle,\quad
\text{for each }T\in\dualDOmega\text{ and }\gamma\in\DOmega,
\]
because $\gamma\mapsto\rho\gamma$ is continuous from $\DOmega$ into $\DOmega$ (see \cite[Definition~3.1.1]{Hor0405}). Here we identify $P_{\rho}$ with $\rho$. Then this differential operator $p(\cdot,D)$ and its adjoint operator $p^{\ast}(\cdot,D):\DOmega\rightarrow\DOmega$ can be extended to distributional operators $P,P^{\ast}:\dualDOmega\rightarrow\dualDOmega$ similar as the distributional derivatives. To avoid any confusion with the symbols we will write $P_1P_2=\rho\circ D^{\alpha}$ and $P_2P_1=D^{\alpha}\circ\rho$ where $P_1=\rho$ and $P_2=D^{\alpha}$. This means that
\[
\rho\circ
D^{\alpha}\gamma=\rho\left(D^{\alpha}\gamma\right),
\quad D^{\alpha}\circ\rho\gamma=(-1)^{\abs{\alpha}}D^{\alpha}\left(\rho\gamma\right),
\quad \gamma\in\DOmega.
\]

\begin{definition}\label{d:diff}
A \emph{differential operator} (with non-constant coefficients)
$P:\dualDOmega\rightarrow\dualDOmega$ is defined by
\[
P=\sum_{\abs{\alpha}\leq m}\rho_{\alpha}\circ
D^{\alpha},\quad{}\text{where
}\rho_{\alpha}\in\Cont^{\infty}(\overline{\Omega})\text{ and
}\alpha\in\NN_0^d,~m\in\NN_0.
\]
Its distributional adjoint operator
$P^{\ast}:\dualDOmega\rightarrow\dualDOmega$ is well-defined by
\[
P^{\ast}=\sum_{\abs{\alpha}\leq
m}(-1)^{\abs{\alpha}}D^{\alpha}\circ\rho_{\alpha}.
\]
We further denote its \emph{order} by
\[
\Order(P):=\max\left\{\abs{\alpha}:\rho_{\alpha}\not\equiv0,~\abs{\alpha}\leq
m,~\alpha\in\NN_0^d\right\}.
\]
A \emph{vector differential operator} $\vP:=(P_1,\cdots,P_{\np})^T$
is constructed using a finite number of differential operators
$P_1,\cdots,P_{\np}$ and its \emph{order}
$\Order(\vP):=\max\{\Order(P_1),\cdots,\Order(P_{\np})\}$.
\end{definition}

After replacing the test function space
$\Schwartz$ (metric space of rapidly decreasing functions in $\Cont^{\infty}(\Rd)$) and tempered distribution space $\Schwartz'$ (dual space of $\Schwartz$) in paper~\cite{FasYe10}, the differential operator $P$ and its distributional adjoint operator $P^{\ast}$ have the same properties as \cite[Definition~4.1]{FasYe10}, i.e.,
$P|_{\DOmega}$ and $P^{\ast}|_{\DOmega}$ are
continuous operators from $\DOmega$ into $\DOmega$ and
\[
\langle PT,\gamma\rangle_{\Omega}=\langle T,P^{\ast}\gamma\rangle_{\Omega}~\text{ and
}~\langle P^{\ast}T,\gamma\rangle_{\Omega}=\langle T,P\gamma\rangle_{\Omega},
\text{ for each }T\in\dualDOmega\text{ and }\gamma\in\DOmega.
\]
Since $\overline{\Omega}$ is compact and $\Cont^{\infty}(\overline{\Omega})\subset\Leb_2(\Omega)$,
the differential operator $P$ of order
$\Order(P)=m$ is a bounded linear operator from $\Hil^m(\Omega)$ into
$\Leb_2(\Omega)$. Its distributional adjoint operator $P^{\ast}:\Hil^m(\Omega)\rightarrow\Leb_2(\Omega)$ is also bounded.
So we can further use a vector differential operator
$\vP:=(P_1,\cdots,P_{\np})^T$ of order $m$ to define a
\emph{$\vP$-semi-inner product} on $\Hil^m(\Omega)$ via the form
\[
(f,g)_{\vP,\Omega}=\sum_{j=1}^{\np}(P_jf,P_jg)_{\Omega},\quad{}
f,g\in\Hil^m(\Omega).
\]
\begin{remark}
Our distributional adjoint operator differs from the classical adjoint
operator of a bounded operator defined in Hilbert space or Banach
space. Our operator is defined in the dual space of $\DOmega$ and it
may not be continuous if the dual of $\DOmega$ is defined by its
natural topology. But the differential operator and its distributional adjoint operator are continuous when $\dualDOmega$ is given the
weak-star topology as the dual of $\DOmega$, i.e., $T_k\rightarrow T$ in $\dualDOmega$ if and only if
$\langle T_k,\gamma \rangle_{\Omega}\rightarrow\langle T,\gamma \rangle_{\Omega}$ for every $\gamma\in\DOmega$ where $\{T_k,T\}_{k=1}^{\infty}\subset\dualDOmega$.
\end{remark}
%

When $\vP=\nabla=(\frac{\partial}{\partial
x_1},\cdots,\frac{\partial}{\partial x_d})^T$ the
$\vP$-semi-inner product is the same as the gradient-semi-inner product
on the Sobolev space $\Hil^1(\Omega)$. The Poincar\'{e}
inequality~\cite[Theorem~12.77]{HunNac05} states that the gradient-semi-norm is
equivalent to the $\Hil^1(\Omega)$-norm on the space
$\Hil^1_0(\Omega)$, i.e., there are two positive constants $C_1$ and
$C_2$ such that
\[
C_1\norm{f}_{1,\Omega}\leq\abs{f}_{\nabla,\Omega}\leq
C_2\norm{f}_{1,\Omega}, \quad{}f\in\Hil^1_0(\Omega).
\]
In order to prove a generalized Poincar\'e (Sobolev) inequality for the Sobolev spaces $\Hil^m(\Omega)$ we need to set up a special class of vector differential operators.

\begin{definition}\label{d:P-semi-inner-product}
$\Pm$ is defined to be a collection of vector differential operators $\vP=(P_1,\cdots,P_{\np})^T$
of order $m\in\NN$ which satisfy the requirements that for each fixed $\abs{\alpha}=m$ and
$\alpha\in\NN_0^d$, there is an element $P_{j(\alpha)}\in\{P_{j}\}_{j=1}^{\np}$ such
that
\[
P_{j(\alpha)}^{\ast}P_{j(\alpha)}=(-1)^{\abs{\alpha}}D^{\alpha}\circ\rho_{\alpha}^2\circ
D^{\alpha}+\sum_{i=1}^{n(\alpha)}Q_{\alpha,i}^{\ast}Q_{\alpha,i},\quad{}1\leq
j(\alpha)\leq\np,\quad{}n(\alpha)\in\NN_0,
\]
where $\rho_{\alpha}\in\Cont^{\infty}(\overline{\Omega})$ is
positive in the whole domain $\overline{\Omega}$ and $Q_{\alpha,i}$, $Q_{\alpha,i}^{\ast}$, $i=1,\cdots,n(\alpha)$, are differential
operators and their distributional adjoint operators.
\end{definition}

Let's consider an example. If $d=2$, then both vector differential operators $\vP_1:=(P_{11},P_{12},P_{13})^T=(\frac{\partial^2}{\partial
x_1^2},\sqrt{2}\frac{\partial^2}{\partial x_1\partial
x_2},\frac{\partial^2}{\partial x_2^2})^T$ and $\vP_2:=P_{21}=\Delta$ belong
to $\mathscr{P}_{\Omega}^2$ because
\[
\begin{cases}
P_{11}^{\ast}P_{11}=D^{\alpha}\circ1\circ D^{\alpha},
&\text{where }\alpha=(2,0),\\
P_{12}^{\ast}P_{12}=D^{\alpha}\circ2\circ D^{\alpha},
&\text{where }\alpha=(1,1),\\
P_{13}^{\ast}P_{13}=D^{\alpha}\circ1\circ D^{\alpha},
&\text{where }\alpha=(0,2),
\end{cases}
\]
and (using the definitions of $P_{1j}$ just made)
\[
\begin{cases}
P_{21}^{\ast}P_{21}=D^{(2,0)}\circ1\circ D^{(2,0)}+
P_{12}^{\ast}P_{12}+P_{13}^{\ast}P_{13},\\
P_{21}^{\ast}P_{21}=D^{(1,1)}\circ2\circ D^{(1,1)}+
P_{11}^{\ast}P_{11}+P_{13}^{\ast}P_{13},\\
P_{21}^{\ast}P_{21}=D^{(0,2)}\circ1\circ D^{(0,2)}+
P_{11}^{\ast}P_{11}+P_{12}^{\ast}P_{12}.
\end{cases}
\]
Therefore we can verify that $\vP_1^{\ast T}\vP_1=\sum_{j=1}^{3}P_{1j}^{\ast}P_{1j}=\vP_2^{\ast T}\vP_2=P_{21}^{\ast}P_{21}=\Delta^2$. However, the null spaces of $\vP_1$ and $\vP_2$ are different, in fact $\Null(\vP_1)\varsubsetneqq\Null(\vP_2)$.

The following lemma extends the Poincar\'e inequality from the usual gradient semi-norm to more general $\vP$-semi norms and higher-order Sobolev norms. Since we could not find it anywhere in the literature we provide a proof.

\begin{lemma}\label{l:P-semi-norm}
If $\vP\in\Pm$ then there exist two positive constants $C_1$ and $C_2$
such that
\begin{equation}\label{e:P-semi-norm}
C_1\norm{f}_{m,\Omega}\leq\abs{f}_{\vP,\Omega}\leq
C_2\norm{f}_{m,\Omega}, \quad{}f\in\Hil^m_0(\Omega).
\end{equation}
\end{lemma}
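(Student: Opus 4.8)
The right-hand inequality in \eqref{e:P-semi-norm} in fact holds on all of $\Hil^m(\Omega)$: since each $P_j\colon\Hil^m(\Omega)\to\Leb_2(\Omega)$ is bounded, there is a constant $C_2>0$ with $\abs{f}_{\vP,\Omega}^2=\sum_{j=1}^{\np}\norm{P_jf}_{\Omega}^2\leq C_2^2\norm{f}_{m,\Omega}^2$ for every $f\in\Hil^m(\Omega)$, so that side is free. The content of the lemma is the left-hand inequality, and the plan is to prove it first on the dense subspace $\Cont_0^{\infty}(\Omega)\subset\Hil^m_0(\Omega)$ and then extend by continuity; on $\Cont_0^{\infty}(\Omega)$ I split it into two steps: (i) $\abs{f}_{\vP,\Omega}$ bounds the top-order Sobolev seminorm $\abs{f}_{m,\Omega}:=\bigl(\sum_{\abs{\alpha}=m}\norm{D^{\alpha}f}_{\Omega}^2\bigr)^{1/2}$ from below, and (ii) $\abs{f}_{m,\Omega}$ bounds the full norm $\norm{f}_{m,\Omega}$ from below.

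For step (i), fix $f\in\Cont_0^{\infty}(\Omega)$; because $f$ has compact support in $\Omega$ every identity below is the classical one with no boundary terms, and every differential operator applied to $f$ again lies in $\Cont_0^{\infty}(\Omega)=\DOmega$. Given a multi-index $\alpha$ with $\abs{\alpha}=m$, let $P_{j(\alpha)}\in\{P_1,\cdots,P_{\np}\}$ be the operator furnished by Definition~\ref{d:P-semi-inner-product}, and set $B:=\rho_{\alpha}\circ D^{\alpha}$, so that $B^{\ast}=(-1)^{\abs{\alpha}}D^{\alpha}\circ\rho_{\alpha}$ and hence $B^{\ast}B=(-1)^{\abs{\alpha}}D^{\alpha}\circ\rho_{\alpha}^2\circ D^{\alpha}$. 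Applying the adjoint relation $\langle P^{\ast}T,\gamma\rangle_{\Omega}=\langle T,P\gamma\rangle_{\Omega}$ (valid for any differential operator $P$, $T\in\dualDOmega$, $\gamma\in\DOmega$) with $\gamma=f$ and, in turn, $P=P_{j(\alpha)}$, $P=B$, $P=Q_{\alpha,i}$, together with the decomposition $P_{j(\alpha)}^{\ast}P_{j(\alpha)}=B^{\ast}B+\sum_{i=1}^{n(\alpha)}Q_{\alpha,i}^{\ast}Q_{\alpha,i}$ of Definition~\ref{d:P-semi-inner-product}, yields
\[
\norm{P_{j(\alpha)}f}_{\Omega}^2=\langle P_{j(\alpha)}^{\ast}P_{j(\alpha)}f,f\rangle_{\Omega}=\int_{\Omega}\rho_{\alpha}^2\abs{D^{\alpha}f}^2\ud\vx+\sum_{i=1}^{n(\alpha)}\norm{Q_{\alpha,i}f}_{\Omega}^2\geq c_{\alpha}^2\norm{D^{\alpha}f}_{\Omega}^2,
\]
where $c_{\alpha}:=\min_{\overline{\Omega}}\rho_{\alpha}>0$, the minimum being attained and strictly positive since $\rho_{\alpha}$ is continuous and positive on the compact set $\overline{\Omega}$. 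Each $\norm{P_{j(\alpha)}f}_{\Omega}^2$ is at most the full sum $\abs{f}_{\vP,\Omega}^2$, and there are only finitely many, say $\ell$, multi-indices of length $m$; summing the displayed bound over them gives $\ell\,\abs{f}_{\vP,\Omega}^2\geq c^2\abs{f}_{m,\Omega}^2$ with $c:=\min_{\abs{\alpha}=m}c_{\alpha}>0$, i.e. $\abs{f}_{\vP,\Omega}\geq c_0\abs{f}_{m,\Omega}$ with $c_0:=c/\sqrt{\ell}$.

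For step (ii), I iterate the Poincar\'e inequality on $\Hil^1_0(\Omega)$ recalled above: there is a constant $c_{\Omega}>0$ with $\norm{g}_{\Omega}\leq c_{\Omega}\abs{g}_{\nabla,\Omega}$ for all $g\in\Hil^1_0(\Omega)$. For $f\in\Cont_0^{\infty}(\Omega)$ and $\abs{\beta}\leq m-1$ the function $D^{\beta}f$ lies in $\Cont_0^{\infty}(\Omega)\subset\Hil^1_0(\Omega)$, so $\norm{D^{\beta}f}_{\Omega}\leq c_{\Omega}\bigl(\sum_{k=1}^{d}\norm{D^{\beta+e_k}f}_{\Omega}^2\bigr)^{1/2}$, that is, the seminorm of order $\abs{\beta}$ is controlled by that of order $\abs{\beta}+1$; a finite downward induction on $\abs{\beta}$ from $m-1$ to $0$ then produces a constant $C$ with $\sum_{\abs{\beta}\leq m-1}\norm{D^{\beta}f}_{\Omega}^2\leq C\abs{f}_{m,\Omega}^2$, whence $\norm{f}_{m,\Omega}\leq(1+C)^{1/2}\abs{f}_{m,\Omega}$. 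Combining (i) and (ii), $\abs{f}_{\vP,\Omega}\geq C_1\norm{f}_{m,\Omega}$ on $\Cont_0^{\infty}(\Omega)$ with $C_1:=c_0(1+C)^{-1/2}>0$. Since $\Cont_0^{\infty}(\Omega)$ is dense in $\Hil^m_0(\Omega)$ by definition and both $f\mapsto\norm{f}_{m,\Omega}$ and $f\mapsto\abs{f}_{\vP,\Omega}$ are $\norm{\cdot}_{m,\Omega}$-continuous (the latter being exactly the right-hand inequality already shown), passing to the limit extends $C_1\norm{f}_{m,\Omega}\leq\abs{f}_{\vP,\Omega}$ to all of $\Hil^m_0(\Omega)$.

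I expect step (i) to be the main obstacle. The key realization is that the defining property of the class $\Pm$ is precisely what makes the sum-of-squares structure of $P_{j(\alpha)}^{\ast}P_{j(\alpha)}$ strip off a manifestly nonnegative remainder $\sum_i\norm{Q_{\alpha,i}f}_{\Omega}^2$ together with exactly one ``pure'' term $\int_{\Omega}\rho_{\alpha}^2\abs{D^{\alpha}f}^2\ud\vx$, after which compactness of $\overline{\Omega}$ upgrades the pointwise positivity of $\rho_{\alpha}$ to a uniform lower bound; one must also be careful that every operator in sight maps $\DOmega$ into $\DOmega$ so that the distributional adjoint identities apply termwise. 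Step (ii) is routine, the only mild care being the bookkeeping of the combinatorial constants in the induction, which I will not spell out here.
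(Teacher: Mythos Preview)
Your proof is correct and follows essentially the same route as the paper: both arguments use the structural decomposition in Definition~\ref{d:P-semi-inner-product} together with the positivity of $\rho_{\alpha}$ on $\overline{\Omega}$ to bound $\abs{f}_{\vP,\Omega}$ below by the top-order seminorm, and then pass from the top-order seminorm to the full $\Hil^m$-norm on $\Hil^m_0(\Omega)$. The only cosmetic differences are that the paper cites this last equivalence directly (Adams--Fournier, Theorem~4.31) while you reprove it by iterating the $\Hil^1_0$ Poincar\'e inequality, and the paper carries the approximating sequence $\gamma_k\to f$ through the computation for general $f\in\Hil^m_0(\Omega)$ whereas you work on $\Cont_0^{\infty}(\Omega)$ first and extend by density at the end.
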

\begin{proof}
By the method of induction, we can easily check that the second inequality in~(\ref{e:P-semi-norm}) is true. We now verify the first inequality in~(\ref{e:P-semi-norm}). Fixing any
$f\in\Hil_0^m(\Omega)$, there is a sequence
$\{\gamma_k\}_{k=1}^{\infty}\subset\DOmega$ so that
$\norm{\gamma_k-f}_{m,\Omega}\rightarrow0$ when
$k\rightarrow\infty$. Because of $\vP\in\Pm$, for each fixed
$\abs{\alpha}=m$ and $\alpha\in\NN_0^d$, there is an element
$P_{j(\alpha)}$ of $\vP$ such that
\[
\begin{split}
\norm{P_{j(\alpha)}f}_{\Omega}^2&=(P_{j(\alpha)}f,P_{j(\alpha)}f)_{\Omega}
=\lim_{k\rightarrow\infty}(P_{j(\alpha)}\gamma_k,P_{j(\alpha)}\gamma_k)_{\Omega}
=\lim_{k\rightarrow\infty}(P_{j(\alpha)}^{\ast}P_{j(\alpha)}\gamma_k,\gamma_k)_{\Omega}\\
&=\lim_{k\rightarrow\infty}((-1)^{\abs{\alpha}}D^{\alpha}\circ\rho_{\alpha}^2\circ
D^{\alpha}\gamma_k,\gamma_k)_{\Omega}
+\lim_{k\rightarrow\infty}\sum_{i=1}^{n(\alpha)}(Q_{\alpha,i}^{\ast}Q_{\alpha,i}\gamma_k,\gamma_k)_{\Omega}\\
&=\lim_{k\rightarrow\infty}(\rho_{\alpha}\circ
D^{\alpha}\gamma_k,\rho_{\alpha}\circ
D^{\alpha}\gamma_k)_{\Omega}
+\lim_{k\rightarrow\infty}\sum_{i=1}^{n(\alpha)}(Q_{\alpha,i}\gamma_k,Q_{\alpha,i}\gamma_k)_{\Omega}\\
&=(\rho_{\alpha}\circ D^{\alpha}f,\rho_{\alpha}\circ
D^{\alpha}f)_{\Omega} +\sum_{i=1}^{n(\alpha)}(Q_{\alpha,i}f,Q_{\alpha,i}f)_{\Omega}\geq\norm{\rho_{\alpha}D^{\alpha}f}_{\Omega}^2\\
&\geq
\min_{\vx\in\overline{\Omega}}\abs{\rho_{\alpha}(\vx)}^2\norm{D^{\alpha}f}_{\Omega}^2.
\end{split}
\]
Since the uniformly continuous function $\rho_{\alpha}$ is
positive in the compact subset $\overline{\Omega}$, we have
$\min_{\vx\in\overline{\Omega}}\abs{\rho_{\alpha}(\vx)}>0$. Therefore,
\[
C_P^2\sum_{\abs{\alpha}=m}\norm{D^{\alpha}f}_{\Omega}^2\leq\abs{f}^2_{\vP,\Omega},
\]
where $C_P^2:=\np^{-d}\min\left\{\abs{\rho_{\alpha}(\vx)}^2:\vx\in\overline{\Omega},~\abs{\alpha}=m,~\alpha\in\NN_0^d\right\}>0$.
According to the Sobolev inequality~\cite[Theorem~4.31]{AdaFou03}, there exists a
positive constant $C_D$ such that
\[
C_D^2\norm{f}_{m,\Omega}^2\leq\sum_{\abs{\alpha}=m}\norm{D^{\alpha}f}_{\Omega}^2,\quad{}f\in\Hil^m_0(\Omega).
\]
By choosing $C_1:=C_PC_D>0$ we complete the proof.

\qed
\end{proof}



\subsection{Boundary Operators}\label{s:Bound}

In this section we wish to define boundary operators on the Sobolev spaces
$\Hil^m(\Omega)$, $m\in\NN$. Since these boundary operators can not
be set up in an arbitrary bounded open domain, we will assume that $\Omega\subset\Rd$ is a
\emph{regular} bounded open domain (connected subset), e.g., it should satisfy a strong local
Lipschitz condition or a uniform cone condition (see~\cite[Chapter~4.1]{AdaFou03} and~\cite[Chapter~12.10]{HunNac05}).
This means that $\Omega$ has a regular boundary trace $\partial\Omega$. Moreover $\partial\Omega$ is closed and bounded
which implies that $\partial\Omega$ is compact because the domain $\Omega$ is open and bounded.

We begin by defining special $\Leb_2$ spaces restricted to the boundary trace $\partial\Omega$ as
\[
\Leb_2(\partial\Omega):=\{f:\partial\Omega\rightarrow\RR:\norm{f}_{\partial\Omega}<\infty\}
\]
together with an inner product given by
\[
(f,g)_{\partial\Omega}:=\int_{\partial\Omega}f(\vx)g(\vx)\ud
S(\vx),\quad{}f,g\in\Leb_2(\partial\Omega).
\]
Here $\int_{\partial\Omega}f(\vx)\ud S(\vx)$ implies that $f$ is integrable on the boundary trace $\partial\Omega$
and $\ud S$ is the surface area element whenever $d\geq2$.
In the special case $d=1$ we interpret the restricted space as
\[
\Leb_2(\partial\Omega):=\left\{f:\partial\Omega=\{a,b\}\rightarrow\RR\right\},
\]
and its inner product as
\[
(f,g)_{\partial\Omega}=f(a)g(a)+f(b)g(b),\quad{}f,g\in\Leb_2(\partial\Omega),
\]
because the measure at the endpoints is defined as $S(a)=S(b)=1$.

The crucial ingredient that allows us to deal with boundary conditions will be a boundary trace mapping which restricts the derivative of an $\Hil^m(\Omega)$ function to the boundary trace $\partial\Omega$.
More precisely, for any fixed $\abs{\beta}\leq m-1$, $\beta\in\NN_0^d$, we will define the \emph{boundary trace mapping of the $\beta^{\text{th}}$ derivative $D^{\beta}$} and denote it by $D^{\beta}|_{\partial\Omega}$.
We will now show that the operator $D^{\beta}|_{\partial\Omega}$ is a well-defined bounded linear operator from
$\Hil^m(\Omega)$ into $\Leb_2(\partial\Omega)$.

When $d=1$ we have $\Omega:=(a,b)$ and $\partial\Omega:=\left\{a,b\right\}$ with $-\infty<a<b<+\infty$.
According to the Sobolev embedding theorem (Rellich-Kondrachov theorem)~\cite[Theorem~6.3]{AdaFou03}, $\Hil^m(a,b)$ is embedded in $\Cont^{m-1}([a,b])$. In this special case the boundary trace mapping of the $\beta^{\text{th}}$ derivative $D^{\beta}$, $D^{\beta}|_{\partial\Omega}:\Hil^m(a,b)\rightarrow\Leb_2(\left\{a,b\right\})$, is well-defined on
$\Hil^m(a,b)$ via
\[
(D^{\beta}|_{\left\{a,b\right\}}f)(x)=D^{\beta}f(x),\quad{}f\in\Hil^m(a,b)\text{
and }x\in\{a,b\}.
\]

In the case $d\geq2$ a linear operator $D^{\beta}|_{\partial\Omega}:\Cont^m(\overline{\Omega})\rightarrow\Cont(\partial\Omega)$
is well-defined by
\[
D^{\beta}|_{\partial\Omega}f:=D^{\beta}f|_{\partial\Omega},\quad{}f\in\Cont^m(\overline{\Omega}).
\]
According to the boundary trace embedding theorem~(\cite[Theorem~5.36]{AdaFou03} and \cite[Theorem~12.76]{HunNac05}) there
is a constant $C_{\beta}>0$ such that
\[
\norm{D^{\beta}f}_{\partial\Omega}\leq
C_{\beta}\norm{D^{\beta}f}_{1,\Omega}\leq
C_{\beta}\norm{f}_{m,\Omega}, \quad{}f\in\Cont^m(\overline{\Omega}),
\]
which shows that $D^{\beta}|_{\partial\Omega}$ is also a bounded
operator from $\Cont^m(\overline{\Omega})\subset\Hil^m(\Omega)$ into
$\Cont(\partial\Omega)\subset\Leb_2(\partial\Omega)$. Since $\Omega$
is assumed to be regular, $\Cont^m(\overline{\Omega})$ is dense in
$\Hil^m(\Omega)$ with respect to the $\Hil^m(\Omega)$-norm by the density theorem for Sobolev
spaces~\cite[Theorem~12.69]{HunNac05}. Therefore, according to the bounded linear
transformation theorem~\cite[Theorem~5.19]{HunNac05},
$D^{\beta}|_{\partial\Omega}$ has a unique bounded linear
extension operator
$B^{\beta}$ on $\Hil^m(\Omega)$ such that
\[
B^{\beta}f=D^{\beta}|_{\partial\Omega}f=D^{\beta}f|_{\partial\Omega},
~f\in\Cont^{m}(\overline{\Omega})\quad{}
\textrm{and}\quad{}\norm{B^{\beta}f}_{\partial\Omega}\leq
C_{\beta}\norm{f}_{m,\Omega},~f\in\Hil^m(\Omega).
\]
We will call $B^{\beta}:\Hil^m(\Omega)\rightarrow\Leb_2(\partial\Omega)$ the \emph{boundary trace mapping of the $\beta^{\text{th}}$ derivative $D^{\beta}$}.
We use the convention for the notations $D^{\beta}|_{\partial\Omega}$ same as $B^{\beta}$ in this article.
%
\begin{remark}\label{r:Boundary}
The construction and definition of these boundary trace mappings are the same as in~\cite{AdaFou03,HunNac05}. In these references it is further shown that $D^{\beta}|_{\partial\Omega}$ is a surjective mapping from $\Hil^m(\Omega)$ onto $\Hil^{m-\abs{\beta}-1/2}(\partial\Omega)$ whenever $d\geq2$. However, we will not be concerned with the space $\Hil^{m-\abs{\beta}-1/2}(\partial\Omega)$ in this paper.
\end{remark}

When $d=1$ we also denote $\Cont(\partial\Omega):=\{f:\partial\Omega=\{a,b\}\rightarrow\RR\}$. So $\Cont(\partial\Omega)\subset\Leb_2(\partial\Omega)$
for every dimension $d\in\NN$ which implies that $b_{\beta}\circ D^{\beta}|_{\partial\Omega}f:=b_{\beta}(D^{\beta}|_{\partial\Omega}f)\in\Leb_2(\partial\Omega)$ when
$b_{\beta}\in\Cont(\partial\Omega)$ and $f\in\Hil^m(\Omega)$.
Furthermore $b_{\beta}\circ D^{\beta}|_{\partial\Omega}$ is continuous on $\Hil^m(\Omega)$.

\begin{definition}\label{d:bound}
A \emph{boundary operator} (with non-constant coefficients)
$B:\Hil^m(\Omega)\rightarrow\Leb_2(\partial\Omega)$ is well-defined
by
\[
B=\sum_{\abs{\beta}\leq
m-1}b_{\beta}\circ D^{\beta}|_{\partial\Omega},\quad{}\text{where
}b_{\beta}\in\Cont(\partial\Omega)\text{ and
}\beta\in\NN_0^d,~m\in\NN.
\]
The \emph{order} of $B$ is given by
\[
\Order(B):=\max\left\{\abs{\beta}:b_{\beta}\not\equiv0,~\abs{\beta}\leq
m-1,~\beta\in\NN_0^d\right\}.
\]
A \emph{vector boundary operator} $\vB=(B_1,\cdots,B_{\nb})^T$ is
formed using a finite number of boundary operators
$B_1,\cdots,B_{\nb}$ and its \emph{order} is
$\Order(\vB):=\max \{ \Order(B_1),\cdots,\Order(B_{\nb}) \}$.
\end{definition}

We can use the vector boundary operator $\vB=(B_1,\cdots,B_{\nb})^T$ of
order $m-1$ to define a \emph{$\vB$-semi-inner product} on
$\Hil^m(\Omega)$ via the form
\[
(f,g)_{\vB,\partial\Omega}=\sum_{j=1}^{\nb}(B_jf,B_jg)_{\partial\Omega},\quad{}
f,g\in\Hil^m(\Omega).
\]

Given a function $f\in\Hil^1(\Omega)$, it is well known that
$f\in\Hil^1_0(\Omega)$ if and only if $f$ vanishes on its boundary
trace. Therefore we need sufficiently many homogeneous boundary
conditions to determine whether a function $f\in\Hil^m(\Omega)$
belongs to $\Hil^m_0(\Omega)$.

\begin{definition}\label{d:B-semi-inner-product}
$\Bm$ is defined to be a collection of vector boundary operators $\vB=(B_1,\cdots,B_{\nb})^T$ of
order $m-1\in\NN_0$ which satisfy the requirement that for each fixed $f\in\Hil^m(\Omega)$
\[
\vB f=\v0\text{ if and only if
}D^{\beta}|_{\partial\Omega}f=0\text{ for each
}\abs{\beta}\leq m-1\text{ and }\beta\in\NN_0^d.
\]
\end{definition}
We illustrate Definition~\ref{d:B-semi-inner-product} with some examples for the set $\mathscr{B}^2_{\Omega}$ in the case $d=1$ with $\partial\Omega:=\{0,1\}$. Two possible members of $\mathscr{B}^2_{\Omega}$ are
\[
\vB_1=\begin{pmatrix}\frac{d}{dx}|_{\partial\Omega}\\I|_{\partial\Omega}\end{pmatrix}
\quad\text{or}\quad
\vB_2=\begin{pmatrix}\frac{d}{dx}|_{\partial\Omega}+I|_{\partial\Omega}\\
\frac{d}{dx}|_{\partial\Omega}-I|_{\partial\Omega}\end{pmatrix}.
\]
While these are both first-order vector boundary operators, their $\vB_1$ and $\vB_2$-semi-inner products defined in $\Hil^2(\Omega)$ are different.

Because of the trivial traces theorem~\cite[Theorem~5.37]{AdaFou03} we know that $f\in\Hil^m_0(\Omega)$ if and only if $D^{\beta}|_{\partial\Omega}f=0$
for each $\abs{\beta}\leq m-1$ and $\beta\in\NN_0^d$ whenever $f\in\Hil^m(\Omega)$. In analogy to this, we can verify the same trivial trace property for
the vector boundary operators $\vB\in\Bm$.
\begin{lemma}\label{l:B-semi-norm}
If $\vB\in\Bm$, then $f\in\Hil^m(\Omega)$ belongs to
$\Hil^m_0(\Omega)$ if and only if $\vB f=\v0$.
\end{lemma}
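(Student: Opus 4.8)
The plan is to reduce Lemma~\ref{l:B-semi-norm} to the trivial traces theorem \cite[Theorem~5.37]{AdaFou03}, which already states that for $f\in\Hil^m(\Omega)$ one has $f\in\Hil^m_0(\Omega)$ if and only if $D^{\beta}|_{\partial\Omega}f=0$ for every multi-index $\beta\in\NN_0^d$ with $\abs{\beta}\leq m-1$. Since $\Omega$ is assumed to be a regular bounded open domain, this theorem applies. Thus the entire content of the lemma is to identify the condition ``$\vB f=\v0$'' with the condition ``$D^{\beta}|_{\partial\Omega}f=0$ for all $\abs{\beta}\leq m-1$'', and this identification is exactly what the defining property of the class $\Bm$ in Definition~\ref{d:B-semi-inner-product} provides.

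\medskip

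\noindent\textbf{Step 1.} Suppose $\vB\in\Bm$ and fix $f\in\Hil^m(\Omega)$. By Definition~\ref{d:B-semi-inner-product}, $\vB f=\v0$ holds if and only if $D^{\beta}|_{\partial\Omega}f=0$ for each $\abs{\beta}\leq m-1$, $\beta\in\NN_0^d$.

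\medskip

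\noindent\textbf{Step 2.} By the trivial traces theorem \cite[Theorem~5.37]{AdaFou03}, applied to the regular bounded open domain $\Omega$ (when $d\geq 2$; the case $d=1$ follows from the Sobolev embedding $\Hil^m(a,b)\hookrightarrow\Cont^{m-1}([a,b])$ noted in Section~\ref{s:Bound}, which makes all boundary traces $D^{\beta}|_{\{a,b\}}$ honest point evaluations and reduces the claim to the one-dimensional trivial traces statement), we have $f\in\Hil^m_0(\Omega)$ if and only if $D^{\beta}|_{\partial\Omega}f=0$ for each $\abs{\beta}\leq m-1$, $\beta\in\NN_0^d$.

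\medskip

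\noindent\textbf{Step 3.} Chaining the two equivalences from Steps 1 and 2 yields $f\in\Hil^m_0(\Omega)$ if and only if $\vB f=\v0$, which is the assertion. \qed

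\medskip

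There is no real obstacle here; the only point requiring a modicum of care is making sure the trivial traces theorem is invoked under the correct hypotheses (regularity of $\Omega$, so that the boundary trace operators $D^{\beta}|_{\partial\Omega}$ are the well-defined bounded extensions constructed in Section~\ref{s:Bound}) and handling the degenerate one-dimensional case separately, where ``trace'' means evaluation at the two endpoints. Everything else is a direct concatenation of definitions with a cited theorem.
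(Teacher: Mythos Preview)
Your proof is correct and follows exactly the approach the paper intends: the lemma is stated immediately after the paper invokes the trivial traces theorem \cite[Theorem~5.37]{AdaFou03}, and no separate proof is given because the result is an immediate combination of that theorem with Definition~\ref{d:B-semi-inner-product}. Your write-up simply makes this chain of equivalences explicit.
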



\subsection{Constructing Hilbert Spaces by Differential and Boundary Operators}\label{s:HpbAOmega}

Let $\Omega$ be a regular bounded open domain of $\Rd$. We want to
observe the relationship between our differential and boundary operators. Given a vector differential operator and a vector boundary operator, i.e.,
\[
\vP=(P_1,\cdots,P_{\np})^T\in\Pm,\quad{}\vB=(B_1,\cdots,B_{\nb})^T\in\Bm,\quad{}
m>d/2\text{ and }m\in\NN,
\]
the differential operator $L$ of order $\Order(L)=2m$ is
well-defined by
\[
L=\vP^{\ast T}\vP=\sum_{j=1}^{\np}P_j^{\ast}P_j.
\]
Next we can construct homogeneous differential equations with respect to $L$ and $\vB$ in the Sobolev space $\Hil^m(\Omega)$,
i.e.,
\begin{equation}\label{e:L-B}
\begin{cases}
Lf=0,&\text{in }\Omega,\\
\vB f=\v0,&\text{on }\partial\Omega.
\end{cases}
\end{equation}
Combining Equation~(\ref{e:L-B}) and the following Lemma~\ref{l:P-B-Eqn}, we will be able to
verify that the inner product spaces $\HpOmega$ and $\HbAOmega$ defined below are
well-defined (see Definitions~\ref{d:HpOmega} and~\ref{d:HbAOmega}).

\begin{lemma}\label{l:P-B-Eqn}
Equation~(\ref{e:L-B})
has the unique trivial solution $f\equiv0$ in $\Hil^m(\Omega)$.
\end{lemma}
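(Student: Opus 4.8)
The plan is to exploit the factorization $L=\vP^{\ast T}\vP=\sum_{j=1}^{\np}P_j^{\ast}P_j$ together with the homogeneous boundary conditions $\vB f=\v0$ to show that the $\vP$-semi-norm of $f$ vanishes, and then to invoke the generalized Poincar\'e inequality of Lemma~\ref{l:P-semi-norm} to conclude $f\equiv0$. The starting point is an integration-by-parts (Green's formula) identity: for $f\in\Hil^m(\Omega)$ satisfying $Lf=0$ in $\Omega$, we want to pair $Lf$ against $f$ itself and push all derivatives onto one factor. Since each $P_j:\Hil^m(\Omega)\to\Leb_2(\Omega)$ and each $P_j^{\ast}:\Hil^m(\Omega)\to\Leb_2(\Omega)$ is bounded, and $\Cont^{\infty}(\overline\Omega)$ is dense in $\Hil^m(\Omega)$, the natural route is to first establish $(P_j^{\ast}g,h)_{\Omega}=(g,P_jh)_{\Omega}$ for $g,h\in\Cont^{\infty}(\overline\Omega)$ modulo boundary terms, then extend by density; the boundary terms that arise are exactly expressions of the form $\int_{\partial\Omega}(\text{derivatives of }g\text{ up to order }m-1)\,(\text{derivatives of }h\text{ up to order }m-1)\,\ud S$.

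First I would show that, because $\vB\in\Bm$, the hypothesis $\vB f=\v0$ forces $D^{\beta}|_{\partial\Omega}f=0$ for every $\abs{\beta}\le m-1$ (this is precisely the defining property in Definition~\ref{d:B-semi-inner-product}), and hence by Lemma~\ref{l:B-semi-norm} that $f\in\Hil^m_0(\Omega)$. This is the conceptual heart of the argument: membership in $\Hil^m_0(\Omega)$ means there is a sequence $\{\gamma_k\}\subset\Cont^{\infty}_0(\Omega)$ with $\norm{\gamma_k-f}_{m,\Omega}\to0$. For such compactly supported test functions all boundary terms in Green's formula vanish identically, so
\[
\abs{\gamma_k}^2_{\vP,\Omega}=\sum_{j=1}^{\np}(P_j\gamma_k,P_j\gamma_k)_{\Omega}
=\sum_{j=1}^{\np}(P_j^{\ast}P_j\gamma_k,\gamma_k)_{\Omega}
=(L\gamma_k,\gamma_k)_{\Omega}.
\]
Passing to the limit $k\to\infty$ using boundedness of each $P_j$ on $\Hil^m(\Omega)$ on the left, and using $Lf=0$ on the right (after checking that $(L\gamma_k,\gamma_k)_{\Omega}\to\langle Lf,f\rangle_{\Omega}=0$, which follows since $L\gamma_k\to Lf$ in $\Leb_2(\Omega)$-sense through the bounded operators $P_j^{\ast}P_j$ acting $\Hil^m\to\Hil^{-m}$, or more simply by writing $(L\gamma_k,\gamma_k)_{\Omega}=\abs{\gamma_k}^2_{\vP,\Omega}\to\abs{f}^2_{\vP,\Omega}$ directly), we obtain $\abs{f}_{\vP,\Omega}=0$.

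Finally, with $f\in\Hil^m_0(\Omega)$ and $\abs{f}_{\vP,\Omega}=0$, Lemma~\ref{l:P-semi-norm} gives $C_1\norm{f}_{m,\Omega}\le\abs{f}_{\vP,\Omega}=0$ with $C_1>0$, hence $\norm{f}_{m,\Omega}=0$, i.e.\ $f\equiv0$. Uniqueness of the trivial solution then follows because Equation~(\ref{e:L-B}) is linear: if $f_1,f_2$ both solve it, so does $f_1-f_2$, which must therefore be $0$. The main obstacle I anticipate is the careful bookkeeping needed to justify that the cross terms and boundary contributions in the Green's formula for the high-order operator $L=\vP^{\ast T}\vP$ genuinely vanish on $\Hil^m_0(\Omega)$ — but this is really subsumed by working with the approximating sequence $\gamma_k\in\Cont^{\infty}_0(\Omega)$ from the outset, where no boundary terms ever appear, so the only analytic input beyond Lemmas~\ref{l:P-semi-norm} and~\ref{l:B-semi-norm} is the continuity of the $P_j$ (hence of the quadratic form $\abs{\cdot}^2_{\vP,\Omega}$) with respect to the $\Hil^m(\Omega)$-norm.
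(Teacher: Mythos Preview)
Your proposal is correct and follows essentially the same route as the paper: use $\vB f=\v0$ together with Lemma~\ref{l:B-semi-norm} to place $f$ in $\Hil^m_0(\Omega)$, approximate by $\gamma_k\in\Cont^{\infty}_0(\Omega)$, integrate by parts to relate $\abs{f}^2_{\vP,\Omega}$ to $\langle Lf,\cdot\rangle_{\Omega}=0$, and finish with Lemma~\ref{l:P-semi-norm}.

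Two minor remarks. First, the paper approximates only \emph{one} slot, writing
\[
\sum_{j=1}^{\np}(P_jf,P_jf)_{\Omega}
=\lim_{k\to\infty}\sum_{j=1}^{\np}(P_jf,P_j\gamma_k)_{\Omega}
=\lim_{k\to\infty}\langle Lf,\gamma_k\rangle_{\Omega}=0,
\]
which avoids any appeal to an $\Hil^{-m}$ duality: since $P_j\gamma_k\in\DOmega$, the passage $(P_jf,P_j\gamma_k)_{\Omega}=\langle P_j^{\ast}P_jf,\gamma_k\rangle_{\Omega}$ is just the definition of the distributional adjoint, and $Lf=0$ kills the right-hand side directly. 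Your version, approximating both slots, is also valid but needs the extra continuity argument $L:\Hil^m_0(\Omega)\to\Hil^{-m}(\Omega)$ that you sketch. Second, your parenthetical ``simpler'' alternative---rewriting $(L\gamma_k,\gamma_k)_{\Omega}=\abs{\gamma_k}^2_{\vP,\Omega}\to\abs{f}^2_{\vP,\Omega}$---is vacuous: it just restates the identity you already have and does not by itself show the limit is zero. Drop that clause and keep the $\Hil^{-m}$ justification, or (cleaner) switch to the one-slot approximation as in the paper.
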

\begin{proof}
It is obvious that $f\equiv0$ is a solution of Equation~(\ref{e:L-B}).
Suppose that $f\in\Hil^m(\Omega)$ is a solution of
Equation~(\ref{e:L-B}). Since $\vB\in\Bm$ and $\vB f=\v0$,
Lemma~\ref{l:B-semi-norm} tells us that $f\in\Hil^m_0(\Omega)$. Thus
there is a sequence $\{\gamma_k\}_{k=1}^{\infty}\subset\DOmega$ such
that $\norm{\gamma_k-f}_{m,\Omega}\rightarrow0$ when
$k\rightarrow\infty$. And then, using the two bilinear forms introduced earlier,
\[
\sum_{j=1}^{\np}(P_jf,P_jf)_{\Omega}
=\lim_{k\rightarrow\infty}\sum_{j=1}^{\np}(P_jf,P_j\gamma_k)_{\Omega}
=\lim_{k\rightarrow\infty}\sum_{j=1}^{\np}\langle P_j^{\ast}P_jf,\gamma_k
\rangle_{\Omega} =\lim_{k\rightarrow\infty}\langle Lf,\gamma_k
\rangle_{\Omega}=0.
\]
Since $\vP\in\Pm$, the generalized Sobolev inequality of Lemma~\ref{l:P-semi-norm} provides the estimate
\[
\norm{f}_{\Omega}^2\leq\norm{f}_{m,\Omega}^2\leq
C_P\abs{f}_{\vP,\Omega}^2=C_P\sum_{j=1}^{\np}\norm{P_jf}_{\Omega}^2=0,\quad{}C_P>0.
\]
This, however, implies that $f\equiv0$ is the unique solution of Equation~(\ref{e:L-B}).

\qed
\end{proof}

Note that in the above proof we employed both the integral and dual bilinear forms. Since we can only ensure that $P_j^{\ast}P_jf\in\dualDOmega$, this quantity needs to be handled with the dual bilinear form.
On the other hand, $P_jf\in\Leb_2(\Omega)$ implies that we can apply the integral bilinear form in this case. Using the notation introduced in~(\ref{e:DistNotation}),
we therefore obtain that $(P_jf,P_j\gamma_k)_{\Omega}=\langle P_jf,P_j\gamma_k\rangle_{\Omega}=\langle P_j^{\ast}P_jf,\gamma_k
\rangle_{\Omega}$ because $P_j\gamma_k\in\DOmega$.

\begin{definition}\label{d:HpOmega}
\[
\HpOmega:=\left\{f\in\Hil^m(\Omega):\vB f=\v0\right\},
\]
and it is equipped with the inner product
\[
(f,g)_{\HpOmega}:=(f,g)_{\vP,\Omega}=\sum_{j=1}^{\np}(P_jf,P_jg)_{\Omega},\quad{}
f,g\in\HpOmega.
\]
\end{definition}
We now show that the $\HpOmega$-inner product is well-defined. If
$f\in\HpOmega$ such that $\norm{f}_{\HpOmega}=0$, then $\vB f=\v0$ and
$\norm{P_jf}_{\Omega}=0$, $j=1,\cdots,\np$, which implies that
\[
\langle Lf,\gamma\rangle_{\Omega}= \sum_{j=1}^{\np}\langle
P^{\ast}_jP_jf,\gamma \rangle_{\Omega}
=\sum_{j=1}^{\np}(P_jf,P_j\gamma)_{\Omega}=
\sum_{j=1}^{\np}(0,P_j\gamma)_{\Omega}=0,
\quad{}\gamma\in\DOmega.
\]
Thus $f$ solves Equation~(\ref{e:L-B}) and then
Lemma~\ref{l:P-B-Eqn} states that $f=0$.

\begin{theorem}\label{t:Hp}
$\HpOmega$ and $\Hil^m_0(\Omega)$ are isomorphic, and therefore $\HpOmega$ is a separable Hilbert space.
\end{theorem}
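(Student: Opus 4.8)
The plan is to exhibit an explicit isomorphism between $\HpOmega$ and $\Hil^m_0(\Omega)$. First I would observe that by definition $\HpOmega=\{f\in\Hil^m(\Omega):\vB f=\v0\}$, and since $\vB\in\Bm$, Lemma~\ref{l:B-semi-norm} tells us that $\vB f=\v0$ holds exactly when $f\in\Hil^m_0(\Omega)$. Hence $\HpOmega$ and $\Hil^m_0(\Omega)$ coincide as sets of functions; the only content of the theorem is that their norms are equivalent, so that the set-theoretic identity map $I:\HpOmega\to\Hil^m_0(\Omega)$ is a bounded bijection with bounded inverse, i.e.\ an isomorphism in the sense of Definition~\ref{d:Imbedding}.

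Next I would invoke Lemma~\ref{l:P-semi-norm}: since $\vP\in\Pm$, there are positive constants $C_1,C_2$ with
\[
C_1\norm{f}_{m,\Omega}\leq\abs{f}_{\vP,\Omega}\leq C_2\norm{f}_{m,\Omega},\quad f\in\Hil^m_0(\Omega).
\]
Because the $\HpOmega$-norm is precisely $\norm{f}_{\HpOmega}=\abs{f}_{\vP,\Omega}$ and, as just noted, $\HpOmega=\Hil^m_0(\Omega)$ as sets, this says that the $\HpOmega$-norm and the $\Hil^m(\Omega)$-norm (restricted to $\Hil^m_0(\Omega)$) are equivalent. Thus $I:\HpOmega\to\Hil^m_0(\Omega)$ and its inverse are both bounded, which is exactly the statement $\HpOmega\cong\Hil^m_0(\Omega)$.

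Finally, for the separability and completeness claim I would argue as follows. The space $\Hil^m(\Omega)$ is a separable Hilbert space, hence so is its closed subspace $\Hil^m_0(\Omega)$ (the closure of $\Cont_0^\infty(\Omega)$ in $\Hil^m(\Omega)$), and separability is inherited by subspaces of metric spaces. Since norm-equivalence preserves Cauchy sequences and convergence, completeness of $(\Hil^m_0(\Omega),\norm{\cdot}_{m,\Omega})$ transfers to $(\HpOmega,\norm{\cdot}_{\HpOmega})$, and likewise separability transfers. It remains only to remark that $\norm{\cdot}_{\HpOmega}$ is genuinely a norm and is induced by the inner product $(\cdot,\cdot)_{\HpOmega}$ — the positive-definiteness of this inner product on $\HpOmega$ having already been established in the paragraph immediately preceding the theorem via Lemma~\ref{l:P-B-Eqn}. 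I do not anticipate a serious obstacle here: the theorem is essentially a bookkeeping consequence of Lemmas~\ref{l:B-semi-norm} and~\ref{l:P-semi-norm}, the only point requiring a little care being the clean separation of the set-level identification ($\HpOmega=\Hil^m_0(\Omega)$ as function spaces) from the norm-equivalence that upgrades this identification to an isomorphism of Hilbert spaces.
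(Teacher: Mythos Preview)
Your proposal is correct and follows essentially the same approach as the paper: use Lemma~\ref{l:B-semi-norm} to identify $\HpOmega$ and $\Hil^m_0(\Omega)$ as sets, then invoke Lemma~\ref{l:P-semi-norm} for the norm equivalence. The paper's proof is just these two sentences; your additional remarks on separability and completeness are accurate elaborations that the paper leaves implicit.
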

\begin{proof}
Because of Lemma~\ref{l:B-semi-norm}, $\HpOmega=\Hil^m_0(\Omega)$.
The generalized Poincar\'e (Sobolev) inequality of Lemma~\ref{l:P-semi-norm} further shows that the $\HpOmega$-norm and
the $\Hil^m(\Omega)$-norm are equivalent on the space
$\Hil^m_0(\Omega)$.

\qed
\end{proof}

In Section~\ref{s:RKHS_Sob} we will establish relationships between $\HpOmega$ and Green kernels with homogeneous boundary conditions.
Furthermore, we will consider Green kernels with nonhomogeneous boundary conditions. To this end we need to define the inner product spaces $\HpbAOmega$ defined below.

\begin{definition}\label{d:HbAOmega}
Let the pair $\Aset:=\{\psi_k;a_k\}_{k=1}^{\na}$ for some $\na\in\NN_0$ where $\{a_k\}_{k=1}^{\na}\subset\RR^+$ and
$\{\psi_k\}_{k=1}^{\na}\subset\Null(L):=\{f\in\Hil^m(\Omega):Lf=0\}$ is an orthonormal subset with respect to the $\vB$-semi-inner product, i.e.,
$(\psi_k,\psi_l)_{\vB,\Omega}=\delta_{kl}$, a Kronecker delta function, $k,l=1,\cdots,\na$. Denote that
\[
\HbAOmega:=\Span\{\psi_1,\cdots,\psi_{\na}\}
\]
and it is equipped with the inner-product
\[
(f,g)_{\HbAOmega}:=\sum_{k=1}^{\na}\frac{\hat{f}_k\hat{g}_k}{a_k},\quad{}f,g\in\HbAOmega,
\]
where $\hat{f}_k$ and $\hat{g}_k$ are the Fourier coefficients of
$f$ and $g$ for the given orthonormal subset, i.e.,
\[
f=\sum_{k=1}^{\na}\hat{f}_k\psi_k,~g=\sum_{k=1}^{\na}\hat{g}_k\psi_k\quad{}
\text{and}\quad{}\{\hat{f}_k\}_{k=1}^{\na},\{\hat{g}_k\}_{k=1}^{\na}\subset\RR.
\]
In particular, if $\na=0$ or $\Aset:=\{0;0\}$ then $\HbAOmega:=\{0\}$ and $(0,0)_{\HbAOmega}:=0$.
\end{definition}
According to Lemma~\ref{l:P-B-Eqn}, the $\vB$-semi-inner product becomes an inner product on $\Null(L)$ which implies that the $\HbAOmega$-inner product is well-defined. It is obvious that $\HbAOmega$ is a separable Hilbert space which is embedded in the Sobolev space $\Hil^m(\Omega)$ because it is finite-dimensional.

We have now finally arrived at the definition we will use in our construction of reproducing kernel Hilbert spaces connected to Green kernels with nonhomogeneous boundary conditions.

\begin{definition}\label{d:HpbAOmega}
The direct sum space $\HpbAOmega$ is defined as
\[
\HpbAOmega:=\HpOmega\oplus\HbAOmega,
\]
and it is equipped with the inner product
\[
(f,g)_{\HpbAOmega}:=(f_P,g_P)_{\HpOmega}+(f_B,g_B)_{\HbAOmega},\quad{}
f,g\in\HpbAOmega,
\]
where $f_P,g_P\in\HpOmega$ and $f_B,g_B\in\HbAOmega$ are the unique
decompositions of $f,g$, i.e.,
\[
f=f_P+f_B,\quad{}g=g_P+g_B,\quad{}\text{where
}f_P,g_P\in\HpOmega\text{ and }f_B,g_B\in\HbAOmega.
\]
\end{definition}
The direct sum space $\HpbAOmega$ is well-defined because $\HpOmega\cap\Null(L)=\{0\}$.

\begin{theorem}\label{t:HpbA}
$\HpbAOmega$ is a separable Hilbert space and it is embedded in
$\Hil^m(\Omega)$. Moreover,
\[
(f,g)_{\HpbAOmega}=(f,g)_{\vP,\Omega}
+\sum_{k=1}^{\na}\frac{\hat{f}_k\hat{g}_k}{a_k}
-\sum_{k=1}^{\na}\sum_{l=1}^{\na}\hat{f}_k\hat{g}_l(\psi_k,\psi_l)_{\vP,\Omega},\quad{}f,g\in\HpbAOmega,
\]
where
\[
\hat{f}_k:=(f,\psi_k)_{\vB,\partial\Omega},\quad{}
\hat{g}_k:=(g,\psi_k)_{\vB,\partial\Omega},\quad{}k=1,\cdots,\na.
\]
In particular, if $\Aset=\left\{\psi_k;a_k\right\}_{k=1}^{\na}$
further satisfies $\{\psi_k\}_{k=1}^{\na}\subseteq\Null(\vP)$ then
\[
\norm{f}_{\HpbAOmega}^2=\abs{f}_{\vP,\Omega}^2+\sum_{k=1}^{\na}\frac{\abs{\hat{f}_k}^2}{a_k},\quad{}f\in\HpbAOmega.
\]
\end{theorem}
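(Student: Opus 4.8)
The plan is to establish the three assertions of Theorem~\ref{t:HpbA} in order: separability and the Hilbert-space structure of $\HpbAOmega$, the embedding into $\Hil^m(\Omega)$, and finally the explicit formula for the inner product (and its simplification when $\{\psi_k\}_{k=1}^{\na}\subseteq\Null(\vP)$). First I would note that $\HpOmega$ is a separable Hilbert space by Theorem~\ref{t:Hp}, and $\HbAOmega$ is a finite-dimensional (hence separable) Hilbert space by the discussion following Definition~\ref{d:HbAOmega}. The direct sum is well-defined because $\HpOmega\cap\Null(L)=\{0\}$ (already observed after Definition~\ref{d:HpbAOmega}), and since $\HbAOmega\subseteq\Null(L)$ the intersection $\HpOmega\cap\HbAOmega=\{0\}$; therefore the orthogonal direct sum of two separable Hilbert spaces is again a separable Hilbert space with the stated inner product. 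For the embedding, I would use the triangle inequality $\norm{f}_{m,\Omega}\leq\norm{f_P}_{m,\Omega}+\norm{f_B}_{m,\Omega}$ together with: (i) the generalized Poincaré inequality of Lemma~\ref{l:P-semi-norm} applied to $f_P\in\Hil^m_0(\Omega)$, giving $\norm{f_P}_{m,\Omega}\leq C_1^{-1}\abs{f_P}_{\vP,\Omega}=C_1^{-1}\norm{f_P}_{\HpOmega}$; and (ii) finite-dimensionality of $\HbAOmega$, on which all norms are equivalent, so $\norm{f_B}_{m,\Omega}\leq C\,\norm{f_B}_{\HbAOmega}$. Combining and using $\norm{f_P}_{\HpOmega},\norm{f_B}_{\HbAOmega}\leq\norm{f}_{\HpbAOmega}$ yields $\norm{f}_{m,\Omega}\leq C'\norm{f}_{\HpbAOmega}$, which is the embedding by Definition~\ref{d:Imbedding}.

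For the inner-product formula, the key is to identify the decomposition $f=f_P+f_B$ explicitly in terms of the Fourier-type coefficients $\hat f_k=(f,\psi_k)_{\vB,\partial\Omega}$. Since $f_P\in\HpOmega$ means $\vB f_P=\v0$, we have $(f,\psi_k)_{\vB,\partial\Omega}=(f_B,\psi_k)_{\vB,\partial\Omega}$; because the $\{\psi_k\}$ are $\vB$-orthonormal and $f_B=\sum_l\hat f_{B,l}\psi_l$ with $\hat f_{B,l}$ the $\HbAOmega$-Fourier coefficients, orthonormality forces $\hat f_{B,k}=\hat f_k$, i.e. $f_B=\sum_{k=1}^{\na}\hat f_k\psi_k$ and $f_P=f-\sum_{k=1}^{\na}\hat f_k\psi_k$. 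I would then expand
\[
(f,g)_{\HpbAOmega}=(f_P,g_P)_{\vP,\Omega}+\sum_{k=1}^{\na}\frac{\hat f_k\hat g_k}{a_k},
\]
substitute $f_P=f-\sum_k\hat f_k\psi_k$, $g_P=g-\sum_l\hat g_l\psi_l$, and use bilinearity of $(\cdot,\cdot)_{\vP,\Omega}$ to get
\[
(f_P,g_P)_{\vP,\Omega}=(f,g)_{\vP,\Omega}-\sum_k\hat f_k(\psi_k,g)_{\vP,\Omega}-\sum_l\hat g_l(f,\psi_l)_{\vP,\Omega}+\sum_{k,l}\hat f_k\hat g_l(\psi_k,\psi_l)_{\vP,\Omega}.
\]
Here the cross term handling is the one place that needs care: I would argue that $(\psi_k,f)_{\vP,\Omega}=\sum_{l}\hat f_l(\psi_k,\psi_l)_{\vP,\Omega}$, which holds because $\psi_k\in\Null(L)$ and, for $h=f-f_B=f_P\in\Hil^m_0(\Omega)$, the pairing $(\psi_k,h)_{\vP,\Omega}=\langle L\psi_k,\tilde h\rangle$-type computation (as in the proof of Lemma~\ref{l:P-B-Eqn}, approximating $h$ by test functions $\gamma_j\to h$ in $\Hil^m$ and using $\langle P_i^\ast P_i\psi_k,\gamma_j\rangle=(P_i\psi_k,P_i\gamma_j)$) gives $\sum_i(P_i\psi_k,P_ih)_\Omega=\langle L\psi_k,h\rangle=0$ since $L\psi_k=0$. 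Thus $(\psi_k,f_P)_{\vP,\Omega}=0$, hence $(\psi_k,f)_{\vP,\Omega}=(\psi_k,f_B)_{\vP,\Omega}=\sum_l\hat f_l(\psi_k,\psi_l)_{\vP,\Omega}$, and similarly for $g$. Plugging these in, the two single sums become $-2\sum_{k,l}\hat f_k\hat g_l(\psi_k,\psi_l)_{\vP,\Omega}$ while the double sum contributes $+\sum_{k,l}$, netting $-\sum_{k,l}\hat f_k\hat g_l(\psi_k,\psi_l)_{\vP,\Omega}$, which is exactly the claimed formula.

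Finally, when $\{\psi_k\}_{k=1}^{\na}\subseteq\Null(\vP)$, every $(\psi_k,\psi_l)_{\vP,\Omega}=0$ and the correction term vanishes; moreover $(f,g)_{\vP,\Omega}$ already equals $\abs{f}_{\vP,\Omega}^2$ on the diagonal, and $a_k^{-1}\hat f_k\hat g_k$ specializes to $a_k^{-1}\abs{\hat f_k}^2$, giving $\norm{f}_{\HpbAOmega}^2=\abs{f}_{\vP,\Omega}^2+\sum_{k=1}^{\na}a_k^{-1}\abs{\hat f_k}^2$. I expect the main obstacle to be the justification of the orthogonality relation $(\psi_k,f_P)_{\vP,\Omega}=0$ for $f_P\in\Hil^m_0(\Omega)$ and $\psi_k\in\Null(L)$: this requires the density/approximation argument with test functions and careful bookkeeping of which quantities live in $\Leb_2(\Omega)$ versus $\dualDOmega$ (exactly the distinction flagged in the remark after Lemma~\ref{l:P-B-Eqn}), and is the analytic heart of the computation; the rest is bilinear-algebra bookkeeping.
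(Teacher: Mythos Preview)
Your proposal is correct and follows essentially the same approach as the paper: the analytic heart in both is the orthogonality $(\psi_k,f_P)_{\vP,\Omega}=0$ for $\psi_k\in\Null(L)$ and $f_P\in\Hil^m_0(\Omega)$, established by approximating $f_P$ with test functions and using $L\psi_k=0$, together with the identification $\hat f_k=(f,\psi_k)_{\vB,\partial\Omega}=(f_B,\psi_k)_{\vB,\partial\Omega}$ from $\vB f_P=\v0$. The only cosmetic difference is that the paper expands $(f,f)_{\vP,\Omega}=(f_P+f_B,f_P+f_B)_{\vP,\Omega}$ and solves for $(f_P,f_P)_{\vP,\Omega}$, whereas you substitute $f_P=f-\sum_k\hat f_k\psi_k$ directly into $(f_P,g_P)_{\vP,\Omega}$; the algebra is equivalent.
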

\begin{proof}
Since $\HpOmega$ and $\HbAOmega$ are separable Hilbert spaces which are embedded in $\Hil^m(\Omega)$, we can immediately
verify that $\HpbAOmega$ is a separable Hilbert space and that it is
embedded in $\Hil^m(\Omega)$.

Fix any $f=f_P+f_B\in\HpbAOmega$, where $f_P\in\HpOmega$ and
$f_B\in\HbAOmega$. We immediately have $\vB f_P=\v0$ and $Lf_B=0$. Since
$f_P\in\HpOmega\cong\Hil_0^m(\Omega)$, there is a sequence
$\{\gamma_k\}_{k=1}^{\infty}\subset\DOmega$ such that
$\norm{\gamma_k-f_P}_{m,\Omega}\rightarrow0$ when
$k\rightarrow\infty$. Thus we have
\[
\begin{split}
(f_B,f_P)_{\vP,\Omega}
&=\lim_{k\rightarrow\infty}\sum_{j=1}^{\np}(P_jf_B,P_j\gamma_k)_{\Omega}
=\lim_{k\rightarrow\infty}\sum_{j=1}^{\np}\langle
P_jf_B,P_j\gamma_k\rangle_{\Omega}\\
&=\lim_{k\rightarrow\infty}\sum_{j=1}^{\np}\langle
P_j^{\ast}P_jf_B,\gamma_k\rangle_{\Omega}=\lim_{k\rightarrow\infty}\langle
Lf_B,\gamma_k\rangle_{\Omega}=0.
\end{split}
\]
Because of $\vB f=\vB f_P+\vB f_B=\vB f_B$, we can compute the Fourier coefficients of $f$ as
$\hat{f}_k=(f,\psi_k)_{\vB,\partial\Omega}=(f_B,\psi_k)_{\vB,\partial\Omega}$
which implies that $f_B=\sum_{k=1}^{\na}\hat{f}_k\psi_k$ and
$\norm{f_B}_{\HbAOmega}^2=\sum_{k=1}^{\na}a_k^{-1}\abs{\hat{f}_k}^2$.
Since
\[
(f_B,f_B)_{\vP,\Omega}=\sum_{j=1}^{\np}(P_jf_B,P_jf_B)_{\Omega}
=\sum_{k=1}^{\na}\sum_{l=1}^{\na}\hat{f}_k\hat{f}_l\sum_{j=1}^{\np}(P_j\psi_k,P_j\psi_l)_{\Omega},
\]
we have
\[
(f,f)_{\vP,\Omega}=(f_P,f_P)_{\vP,\Omega}+2(f_P,f_B)_{\vP,\Omega}+(f_B,f_B)_{\vP,\Omega}=
(f_P,f_P)_{\vP,\Omega}+\sum_{k=1}^{\na}\sum_{l=1}^{\na}\hat{f}_k\hat{f}_l(\psi_k,\psi_l)_{\vP,\Omega}.
\]

Summarizing the above discussion, we obtain that
\[
\norm{f}_{\HpbAOmega}^2=\norm{f_P}_{\HpOmega}^2+\norm{f_B}_{\HbAOmega}^2
=\abs{f}_{\vP,\Omega}^2+\sum_{k=1}^{\na}\frac{\abs{\hat{f}_k}^2}{a_k}
-\sum_{k=1}^{\na}\sum_{l=1}^{\na}\hat{f}_k\hat{f}_l(\psi_k,\psi_l)_{\vP,\Omega}.
\]

$\qed$
\end{proof}

We can also check that $\HpbAOmega\cong\Hil^m_0(\Omega)\oplus\Span\{\psi_k\}_{k=1}^{\na}$, where the direct sum space is defined by the $\Hil^m(\Omega)$-norm.

\begin{corollary}\label{c:HpbAOmega-Null}
If $\Null(\vP)$ is finite-dimensional, then there is a
pair $\Aset$ as in Definition~\ref{d:HbAOmega} such that
$\HpbAOmega\cong\Hil^m_0(\Omega)\oplus\Null(\vP)$
with its inner product equal to
\[
(f,g)_{\HpbAOmega}=(f,g)_{\vP,\Omega}+(f,g)_{\vB,\partial\Omega},\quad{}f,g\in\HpbAOmega.
\]
(Here the direct sum space $\Hil^m_0(\Omega)\oplus\Null(\vP)$ is given the $\Hil^m(\Omega)$-norm.)
\end{corollary}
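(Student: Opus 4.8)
The plan is to build on Theorem~\ref{t:HpbA} by choosing an appropriate pair $\Aset$ when $\Null(\vP)$ is finite-dimensional, and then verifying that the two direct sum spaces coincide as sets and that the displayed inner product formula reduces correctly. First I would observe that since $\vP\in\Pm$ implies $\Null(\vP)\subseteq\Null(L)$ (because $Lf=\vP^{\ast T}\vP f$), and since by Lemma~\ref{l:P-B-Eqn} the $\vB$-semi-inner product is a genuine inner product on $\Null(L)$, I can apply Gram--Schmidt to the finite-dimensional space $\Null(\vP)$ to obtain an orthonormal basis $\{\psi_k\}_{k=1}^{\na}$ with respect to $(\cdot,\cdot)_{\vB,\partial\Omega}$, where $\na=\dim\Null(\vP)$. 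Then I would set all $a_k=1$, so $\Aset:=\{\psi_k;1\}_{k=1}^{\na}$ satisfies the hypotheses of Definition~\ref{d:HbAOmega} and additionally satisfies $\{\psi_k\}_{k=1}^{\na}\subseteq\Null(\vP)$, putting us in the ``in particular'' case of Theorem~\ref{t:HpbA}.

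Next I would identify the underlying set. By Definition~\ref{d:HpbAOmega} and Definition~\ref{d:HbAOmega}, $\HpbAOmega=\HpOmega\oplus\Span\{\psi_1,\dots,\psi_{\na}\}=\HpOmega\oplus\Null(\vP)$ as a vector space; since $\HpOmega\cong\Hil^m_0(\Omega)$ by Theorem~\ref{t:Hp} (indeed $\HpOmega=\Hil^m_0(\Omega)$ as a set by Lemma~\ref{l:B-semi-norm}), the set underlying $\HpbAOmega$ is exactly $\Hil^m_0(\Omega)\oplus\Null(\vP)$. The directness of the sum is guaranteed by $\HpOmega\cap\Null(L)=\{0\}$ from Lemma~\ref{l:P-B-Eqn}, which in particular gives $\HpOmega\cap\Null(\vP)=\{0\}$.

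Then I would simplify the inner product. With $a_k=1$, the ``in particular'' conclusion of Theorem~\ref{t:HpbA} gives $\norm{f}_{\HpbAOmega}^2=\abs{f}_{\vP,\Omega}^2+\sum_{k=1}^{\na}\abs{\hat{f}_k}^2$ where $\hat{f}_k=(f,\psi_k)_{\vB,\partial\Omega}$. Because $\{\psi_k\}_{k=1}^{\na}$ is an orthonormal basis of $\Null(\vP)$ for the $\vB$-semi-inner product and, writing $f=f_P+f_B$ with $\vB f=\vB f_B$, the vector $\vB f_B$ lies in the span of $\{\vB\psi_k\}$ in $\otimes_{j}\Leb_2(\partial\Omega)$, Parseval's identity yields $\sum_{k=1}^{\na}\abs{\hat{f}_k}^2=(f_B,f_B)_{\vB,\partial\Omega}=(f,f)_{\vB,\partial\Omega}$; the last equality uses $\vB f_P=\v0$. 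Polarizing gives the bilinear formula $(f,g)_{\HpbAOmega}=(f,g)_{\vP,\Omega}+(f,g)_{\vB,\partial\Omega}$. Finally, the embedding $\HpbAOmega\hookrightarrow\Hil^m(\Omega)$ is inherited from Theorem~\ref{t:HpbA}, and the norm-equivalence on $\Hil^m_0(\Omega)\oplus\Null(\vP)$ with the $\Hil^m(\Omega)$-norm follows from Lemma~\ref{l:P-semi-norm} on the $\HpOmega$ summand together with finite-dimensionality of $\Null(\vP)$ (all norms equivalent there), plus the fact that the decomposition $f\mapsto(f_P,f_B)$ is a bounded projection.

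The main obstacle I anticipate is justifying that $\sum_{k=1}^{\na}\abs{\hat{f}_k}^2$ genuinely equals $(f,f)_{\vB,\partial\Omega}$ rather than merely being bounded by it; this requires knowing that $\vB f_B=\vB f$ lies in $\Span\{\vB\psi_1,\dots,\vB\psi_{\na}\}$, which holds since $f_B\in\Null(\vP)=\Span\{\psi_k\}$ and $\vB$ is linear, so that no ``tail'' is lost in the Figure-Bessel step. A secondary technical point is checking that the orthonormalization stays inside $\Null(\vP)$ and that $a_k=1>0$ meets the $\RR^+$ requirement in Definition~\ref{d:HbAOmega} — both are routine. Everything else is a direct specialization of the already-proven Theorem~\ref{t:HpbA}.
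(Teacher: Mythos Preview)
Your proposal is correct and follows precisely the intended route: the paper states this result as an immediate corollary of Theorem~\ref{t:HpbA} without giving a separate proof, and the natural specialization is exactly what you describe---take $\{\psi_k\}_{k=1}^{\na}$ to be a $\vB$-orthonormal basis of the finite-dimensional space $\Null(\vP)\subseteq\Null(L)$, set $a_k=1$, and invoke the ``in particular'' clause of Theorem~\ref{t:HpbA}. Your handling of the Parseval step (that $\sum_k|\hat f_k|^2=(f,f)_{\vB,\partial\Omega}$ because $\vB f=\vB f_B$ with $f_B\in\Span\{\psi_k\}$) is the right justification and resolves the obstacle you flagged.
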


\begin{remark}\label{r:countableAset}
In~\cite{Ye12} the finite pair $\Aset=\{\psi_k;a_k\}_{k=1}^{\na}$ is generalized to a countable pair
$\Aset=\{\psi_k;a_k\}_{k=1}^{\infty}\subset\Null(L)\otimes\RR^+$
such that the $\HpbAOmega\cong\Hil^m(\Omega)$.
\end{remark}

\begin{corollary}\label{c:HpbOmega}
$\Hil^m_0(\Omega)\oplus\Null(L)=\Hil^m(\Omega)$.
\end{corollary}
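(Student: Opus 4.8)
\textbf{Proof proposal for Corollary~\ref{c:HpbOmega}.}

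The plan is to derive $\Hil^m_0(\Omega)\oplus\Null(L)=\Hil^m(\Omega)$ as a direct consequence of Corollary~\ref{c:HpbAOmega-Null} together with Lemma~\ref{l:P-B-Eqn} and the trivial traces characterization. First I would observe that the direct sum on the left is well-defined: by Lemma~\ref{l:P-B-Eqn}, any $f\in\Hil^m_0(\Omega)\cap\Null(L)$ satisfies both $Lf=0$ and $\vB f=\v0$ (the latter because $f\in\Hil^m_0(\Omega)$ forces all boundary traces $D^\beta|_{\partial\Omega}f=0$ by the trivial traces theorem, hence $\vB f=\v0$ for any $\vB\in\Bm$), so $f\equiv0$. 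Thus $\Hil^m_0(\Omega)\cap\Null(L)=\{0\}$ and the sum is genuinely direct. The inclusion $\Hil^m_0(\Omega)\oplus\Null(L)\subseteq\Hil^m(\Omega)$ is immediate since both summands lie in $\Hil^m(\Omega)$.

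The substance is the reverse inclusion $\Hil^m(\Omega)\subseteq\Hil^m_0(\Omega)\oplus\Null(L)$, i.e., every $f\in\Hil^m(\Omega)$ decomposes as $f=f_0+f_L$ with $f_0\in\Hil^m_0(\Omega)$ and $f_L\in\Null(L)$. The natural route is to find $f_L\in\Null(L)$ with the same boundary traces as $f$, so that $f-f_L\in\Hil^m_0(\Omega)$ by the trivial traces theorem. Concretely, given $f\in\Hil^m(\Omega)$, we want to solve the boundary value problem $Lu=0$ in $\Omega$, $D^\beta|_{\partial\Omega}u=D^\beta|_{\partial\Omega}f$ on $\partial\Omega$ for all $|\beta|\le m-1$; existence and uniqueness of such $u\in\Hil^m(\Omega)$ is exactly the statement that the trace map restricted to $\Null(L)$ is a bijection onto the relevant product of boundary trace spaces. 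Since $L=\vP^{\ast T}\vP$ with $\vP\in\Pm$, this is a coercive problem on $\Hil^m_0(\Omega)$: by Lemma~\ref{l:P-semi-norm} the form $(u,v)\mapsto(\vP u,\vP v)_\Omega$ is equivalent to the $\Hil^m(\Omega)$-inner product on $\Hil^m_0(\Omega)$, so Lax--Milgram (or orthogonal projection in the Hilbert space $\Hil^m_0(\Omega)$) produces, for the fixed $f$, a unique $f_0\in\Hil^m_0(\Omega)$ with $(\vP f_0,\vP\gamma)_\Omega=(\vP f,\vP\gamma)_\Omega$ for all $\gamma\in\DOmega$; equivalently $L(f-f_0)=0$ in $\dualDOmega$, i.e., $f_L:=f-f_0\in\Null(L)$, and $f=f_0+f_L$ is the desired decomposition. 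Uniqueness of the decomposition follows again from $\Hil^m_0(\Omega)\cap\Null(L)=\{0\}$.

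Alternatively — and this is the cleaner citation-based argument — one invokes Corollary~\ref{c:HpbAOmega-Null}: whenever $\Null(\vP)$ is finite-dimensional, $\HpbAOmega\cong\Hil^m_0(\Omega)\oplus\Null(\vP)$, and by the construction of $\HpbAOmega$ via a maximal pair $\Aset$ (together with Remark~\ref{r:countableAset} and $\cite{Ye12}$ for the general countable case, $\HpbAOmega\cong\Hil^m(\Omega)$) one reads off $\Hil^m(\Omega)=\Hil^m_0(\Omega)\oplus\Null(L)$ directly, since $\Null(L)$ is precisely the span of all admissible $\psi_k$. I expect the main obstacle to be the well-posedness of the auxiliary boundary value problem in the case $d\ge 2$ — specifically, verifying that for \emph{every} $f\in\Hil^m(\Omega)$ there is some $f_L\in\Null(L)$ matching all traces $D^\beta|_{\partial\Omega}f$, $|\beta|\le m-1$. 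This hinges on the surjectivity of the combined trace map onto $\prod_{|\beta|\le m-1}\Hil^{m-|\beta|-1/2}(\partial\Omega)$ (Remark~\ref{r:Boundary}) and on the coercivity from Lemma~\ref{l:P-semi-norm}; once those are in hand, the decomposition is a routine projection argument, and the rest is bookkeeping with the trivial traces theorem.
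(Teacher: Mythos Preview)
Your primary argument is correct and matches the paper's own sketch almost exactly: the paper also projects $f\in\Hil^m(\Omega)$ onto $\Hil^m_0(\Omega)$ with respect to the $\vP$-semi-inner product (using the coercivity from Lemma~\ref{l:P-semi-norm}) and then checks that the remainder $f-f_P$ lies in $\Null(L)$. Your alternative citation-based route via Corollary~\ref{c:HpbAOmega-Null} and Remark~\ref{r:countableAset} is unnecessary and somewhat circular (the countable-$\Aset$ result in \cite{Ye12} presumably relies on this very decomposition), so you should drop it and keep only the projection/Lax--Milgram argument.
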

To achieve the proof, we first show that $\Null(L)$ is complete with respect to the $\Hil^m(\Omega)$-norm.
For each $f\in\Hil^m(\Omega)$ we can find its orthogonal projection $f_P$ in $\Hil^m_0(\Omega)$ with respect to the $\vP$-semi-inner product.
Finally, we can check that $f_B:=f-f_P\in\Null(L)$. The complete proof is worked out in the thesis~\cite{Ye12}.


\section{Constructing Reproducing Kernels via Green
Kernels}\label{s:RKHS_Sob}


Let $\Omega$ be a regular bounded open domain of $\Rd$. Given a
vector differential operator $\vP=(P_1,\cdots,P_{\np})^T\in\Pm$ and a
vector boundary operator $\vB=(B_1,\cdots,B_{\nb})^T\in\Bm$, where $m>d/2$ and $m\in\NN$, we want to find a Green
kernel of the differential operator $L=\vP^{\ast
T}\vP=\sum_{j=1}^{\np}P_j^{\ast}P_j$ with either homogeneous or nonhomogeneous boundary conditions
given by $\vB$ so that it is also the reproducing kernel of a reproducing-kernel Hilbert space.
Furthermore, we assume that the pair
$\Aset:=\left\{\psi_k;a_k\right\}_{k=1}^{\na}\subset\Null(L)\otimes\RR^+$ satisfies the
conditions of Definition~\ref{d:HbAOmega} such that $\{\psi_k\}_{k=1}^{\na}$ is an orthonormal subset with respect to the $\vB$-semi-inner product.

In this section, we will show that the Green kernels with either homogeneous or nonhomogeneous boundary conditions
are reproducing kernels and that their reproducing-kernel Hilbert spaces can be represented by $\vP$, $\vB$ and $\Aset$.

\begin{definition}\label{d:Green-Nonhomo}
Suppose that the set
$\Rset:=\{\vGamma(\cdot,\vy):\vy\in\Omega\}\subseteq\otimes_{j=1}^{\nb}\Leb_2(\partial\Omega)$.
A kernel
$\Phi:\Omega\times\Omega\rightarrow\RR$ is called a \emph{Green kernel
of $L$ with boundary conditions given by $\vB$ and $\Rset$} if for each fixed
$\vy\in\Omega$, $\Phi(\cdot,\vy)\in\Hil^m(\Omega)$ is a solution of
\[
\begin{cases}
L\Phi(\cdot,\vy)=\delta_{\vy},&\text{in }\Omega,\\
\vB \Phi(\cdot,\vy)=\vGamma(\cdot,\vy),&\text{on }\partial\Omega.
\end{cases}
\]
If $\Rset\equiv\{0\}$, then the kernel $G:\Omega\times\Omega\rightarrow\RR$ is called a \emph{Green
kernel of $L$ with homogeneous boundary conditions given by $\vB$}, i.e.,
for each fixed
$\vy\in\Omega$, $G(\cdot,\vy)\in\Hil^m(\Omega)$ is a solution of
\[
\begin{cases}
LG(\cdot,\vy)=\delta_{\vy},&\text{in }\Omega,\\
\vB G(\cdot,\vy)=\v0,&\text{on }\partial\Omega.
\end{cases}
\]
(We can also use Lemma~\ref{l:P-B-Eqn} to show that the Green kernel is a unique solution.)
\end{definition}

Next we will view the relationship between the eigenvalues and eigenfunctions
of the Green kernels (reproducing kernels) and those of
the differential operators with either homogeneous or nonhomogeneous boundary conditions.

\begin{definition}\label{d:W-eig}
Let $\Phi\in\Leb_2(\Omega\times\Omega)$.
$\{\lambda_p\}_{p=1}^{\infty}\subset\RR$ and $\{e_p\}_{p=1}^{\infty}\subset\Leb_2(\Omega)\backslash\{0\}$
are called \emph{eigenvalues and eigenfunctions of $\Phi$} if for each fixed
$p\in\NN$,
\[
(\Integral_{\Phi,\Omega}e_p)(\vy)=(\Phi(\cdot,\vy),e_p)_{\Omega}=\lambda_pe_p(\vy),\quad{}\vy\in\Omega,
\]
where $\Integral_{\Phi,\Omega}$ is the integral operator defined in (\ref{e:IntOp}).
\end{definition}

\begin{definition}\label{d:L-eig-non}
Let the set
$\Eset:=\{\veta_p\}_{p=1}^{\infty}\subseteq\otimes_{j=1}^{\nb}\Leb_2(\partial\Omega)$.
$\{\mu_p\}_{p=1}^{\infty}\subset\RR$ and $\{e_p\}_{p=1}^{\infty}\subset\Hil^m(\Omega)\backslash\{0\}$
are called \emph{eigenvalues and eigenfunctions
of $L$ with boundary conditions given by $\vB$ and $\Eset$} if
for each fixed $p\in\NN$ we have
\[
\begin{cases}
Le_p=\mu_pe_p,&\text{in }\Omega,\\
\vB e_p=\veta_p,&\text{on }\partial\Omega.
\end{cases}
\]
If $\Eset\equiv\{0\}$, then $\{\mu_p\}_{p=1}^{\infty}\subset\RR$ and $\{e_p\}_{p=1}^{\infty}\subset\Hil^m(\Omega)\backslash\{0\}$
are called \emph{eigenvalues and eigenfunctions
of $L$ with homogeneous boundary conditions given by
$\vB$}, i.e., for each $p\in\NN$
\[
\begin{cases}
Le_p=\mu_pe_p,&\text{in }\Omega,\\
\vB e_p=\v0,&\text{on }\partial\Omega.
\end{cases}
\]
\end{definition}

The reader may be wondering about our use of different names for Green kernels. In the following we will use these different names to distinguish between a various types of Green kernels. The kernels $G$ and $K$ are defined in Theorems~\ref{t:RKHS-HpOmega} and~\ref{t:RKHS-HpbAOmega}, and they are Green kernels with homogeneous and nonhomogeneous boundary conditions respectively. Moreover, a kernel $R$ determined by the set $\Aset$ is introduced in Theorem~\ref{t:RKHS-HbAOmega}. We will verify below that $K$, $G$ and $R$ are reproducing kernels. Finally, we use the symbol $\Phi$ to denote the Green kernel corresponding to the general boundary conditions stated in Definition~\ref{d:Green-Nonhomo}. The Green kernel $\Phi$ may not be a reproducing kernel. An example of such a typical case is given in Remark~\ref{r:Green-non}.


\subsection{Green Kernels with Homogeneous Boundary
Conditions}\label{s:Green-Homo}

\begin{theorem}\label{t:RKHS-HpOmega}
Suppose that there is a Green kernel $G$ of $L$ with homogeneous boundary
conditions given by $\vB$ as in Definition~\ref{d:Green-Nonhomo}. Then $G$ is the reproducing kernel of the reproducing-kernel Hilbert space $\HpOmega$ (see Definition~\ref{d:HpOmega}) and $\HpOmega\cong\Hil^m_0(\Omega)$.
\end{theorem}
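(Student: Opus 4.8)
The plan is to verify directly the two defining properties of a reproducing kernel from Definition~\ref{d:RKHS}, using the Green kernel equations and the isomorphism $\HpOmega\cong\Hil^m_0(\Omega)$ established in Theorem~\ref{t:Hp}. The first property, $G(\cdot,\vy)\in\HpOmega$ for each fixed $\vy\in\Omega$, is immediate: by definition of a Green kernel with homogeneous boundary conditions we have $G(\cdot,\vy)\in\Hil^m(\Omega)$ and $\vB G(\cdot,\vy)=\v0$, which is exactly membership in $\HpOmega$. The real content is the reproducing identity $f(\vy)=(G(\cdot,\vy),f)_{\HpOmega}=(G(\cdot,\vy),f)_{\vP,\Omega}$ for all $f\in\HpOmega$ and all $\vy\in\Omega$.

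To prove the reproducing identity, first I would establish it for test functions $\gamma\in\DOmega$ (or, more precisely, for $f\in\Hil^m_0(\Omega)$ via a density argument). For $\gamma\in\DOmega$, using that $P_j|_{\DOmega}$ maps into $\DOmega$ and the adjoint relation $\langle P_j T,\gamma\rangle_\Omega=\langle T,P_j^\ast\gamma\rangle_\Omega$ together with the identification $(P_jG(\cdot,\vy),P_j\gamma)_\Omega=\langle P_jG(\cdot,\vy),P_j\gamma\rangle_\Omega=\langle P_j^\ast P_jG(\cdot,\vy),\gamma\rangle_\Omega$, I would compute
\[
(G(\cdot,\vy),\gamma)_{\vP,\Omega}=\sum_{j=1}^{\np}\langle P_j^\ast P_j G(\cdot,\vy),\gamma\rangle_\Omega=\langle LG(\cdot,\vy),\gamma\rangle_\Omega=\langle\delta_{\vy},\gamma\rangle_\Omega=\gamma(\vy).
\]
This is the same manipulation already used in the proofs of Lemma~\ref{l:P-B-Eqn} and Theorem~\ref{t:HpbA}, and it is legitimate because $P_jG(\cdot,\vy)\in\Leb_2(\Omega)$ while $P_j^\ast P_j G(\cdot,\vy)$ lives only in $\dualDOmega$, so each bilinear form is applied where it makes sense. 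Next, for general $f\in\HpOmega=\Hil^m_0(\Omega)$, pick $\{\gamma_k\}\subset\DOmega$ with $\norm{\gamma_k-f}_{m,\Omega}\to0$. Since $G(\cdot,\vy)\in\Hil^m(\Omega)$, the operators $P_j:\Hil^m(\Omega)\to\Leb_2(\Omega)$ are bounded, so $(G(\cdot,\vy),\gamma_k)_{\vP,\Omega}\to(G(\cdot,\vy),f)_{\vP,\Omega}$; on the other hand $\gamma_k(\vy)\to f(\vy)$ because the embedding $\Hil^m(\Omega)\hookrightarrow\Cont(\overline\Omega)$ (valid since $m>d/2$, Sobolev embedding) makes point evaluation at $\vy$ continuous in the $\Hil^m(\Omega)$-norm. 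Passing to the limit gives $(G(\cdot,\vy),f)_{\vP,\Omega}=f(\vy)$, which is the reproducing property. Finally, $\HpOmega$ is a genuine Hilbert space by Theorem~\ref{t:Hp}, so it is a reproducing-kernel Hilbert space with reproducing kernel $G$, and $\HpOmega\cong\Hil^m_0(\Omega)$ is already recorded in Theorem~\ref{t:Hp}.

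The step I expect to require the most care is the interchange of limit and inner product combined with the point-evaluation continuity: one must be sure that the convergence $\gamma_k\to f$ in $\Hil^m(\Omega)$ simultaneously controls both $(G(\cdot,\vy),\gamma_k)_{\vP,\Omega}$ (handled by boundedness of the $P_j$ on $\Hil^m(\Omega)$, uniformly in $k$) and $\gamma_k(\vy)$ (handled by $m>d/2$). A secondary subtlety worth a sentence is justifying the identification $(P_jG(\cdot,\vy),P_j\gamma)_\Omega=\langle P_j^\ast P_jG(\cdot,\vy),\gamma\rangle_\Omega$ at the distributional level, exactly as the paper does in the remark following Lemma~\ref{l:P-B-Eqn}; everything else is routine.
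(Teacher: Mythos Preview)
Your proposal is correct and follows essentially the same route as the paper's own proof: verify $G(\cdot,\vy)\in\HpOmega$ from the definition, compute $(G(\cdot,\vy),\gamma)_{\vP,\Omega}=\langle LG(\cdot,\vy),\gamma\rangle_\Omega=\gamma(\vy)$ for $\gamma\in\DOmega$ via the distributional adjoint, and then pass to general $f\in\HpOmega=\Hil^m_0(\Omega)$ by density, using boundedness of the $P_j$ on $\Hil^m(\Omega)$ together with the Sobolev embedding $\Hil^m(\Omega)\hookrightarrow\Cont(\overline\Omega)$ (valid since $m>d/2$) for the pointwise limit. The two subtleties you flag are exactly the ones the paper handles, so nothing further is needed.
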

\begin{proof}
According to Theorem~\ref{t:Hp}, $\HpOmega\cong\Hil^m_0(\Omega)$.
Fix any $\vy\in\Omega$. Since $G(\cdot,\vy)\in\Hil^m(\Omega)$ and $\vB
G(\cdot,\vy)=\v0$, we have $G(\cdot,\vy)\in\HpOmega$ by Lemma~\ref{l:B-semi-norm}.

We now verify the reproducing property of $G$. According to the Sobolev
embedding theorem~\cite{AdaFou03}, $\Hil^m(\Omega)$ is embedded into
$\Cont(\overline{\Omega})$ when $m>d/2$, i.e., there is a positive
constant $C_m$ such that
\[
\norm{f}_{\Cont(\overline{\Omega})}:=\sup\left\{\abs{f(\vx)}:\vx\in\Omega\right\}
\leq C_m\norm{f}_{m,\Omega},\quad{}f\in\Hil^m(\Omega)\subseteq\Cont(\overline{\Omega}).
\]
For any fixed $f\in\HpOmega$ there is a sequence
$\left\{\gamma_k\right\}_{k=1}^{\infty}\subset\DOmega$ such that
\begin{equation}\label{e:RKHS-HpOmega-1}
\abs{f(\vy)-\gamma_k(\vy)}\leq\norm{f-\gamma_k}_{\Cont(\overline{\Omega})}
\leq C_m\norm{f-\gamma_k}_{m,\Omega}\rightarrow0,\quad{}\text{when
}k\rightarrow\infty.
\end{equation}
Since
\[
\begin{split}
&(G(\cdot,\vy),\gamma_k)_{\HpOmega}=\sum_{j=1}^{\np}(P_jG(\cdot,\vy),P_j\gamma_k)_{\Omega}
=\sum_{j=1}^{\np}\langle P_jG(\cdot,\vy),P_j\gamma_k\rangle_{\Omega}\\
=&\sum_{j=1}^{\np}\langle
P^{\ast}_jP_jG(\cdot,\vy),\gamma_k\rangle_{\Omega}= \langle
LG(\cdot,\vy),\gamma_k\rangle_{\Omega}=\langle\delta_{\vy},\gamma_k\rangle_{\Omega}=\gamma_k(\vy),\quad{}k\in\NN,
\end{split}
\]
we can determine that
\begin{equation}\label{e:RKHS-HpOmega-2}
\begin{split}
&\abs{(G(\cdot,\vy),f)_{\HpOmega}-\gamma_k(\vy)}=
\abs{(G(\cdot,\vy),f)_{\HpOmega}-(G(\cdot,\vy),\gamma_k)_{\HpOmega}}\\
\leq&\norm{f-\gamma_k}_{\HpOmega}\norm{G(\cdot,\vy)}_{\HpOmega} \leq
C_P\norm{f-\gamma_k}_{m,\Omega}\norm{G(\cdot,\vy)}_{m,\Omega}\rightarrow0,
\text{ when }k\rightarrow\infty,
\end{split}
\end{equation}
where the positive constant $C_P$ is independent of the function $f$. Here -- as before -- the two notations $(\cdot,\cdot)_{\Omega}$ and $\langle\cdot,\cdot\rangle_{\Omega}$ denote the integral bilinear form and the dual bilinear form, respectively (see Section~\ref{s:DiffAdjoint}).
Combining Equations~(\ref{e:RKHS-HpOmega-1})
and~(\ref{e:RKHS-HpOmega-2}), we will get
\[
(G(\cdot,\vy),f)_{\HpOmega}=f(\vy).
\]

$\qed$
\end{proof}

\begin{corollary}\label{c:PDK-HpOmega}
$G$ is a symmetric positive definite kernel on $\Omega$.
\end{corollary}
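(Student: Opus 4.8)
The plan is to deduce Corollary~\ref{c:PDK-HpOmega} directly from Theorem~\ref{t:RKHS-HpOmega} together with the basic fact that the reproducing kernel of any reproducing-kernel Hilbert space is symmetric and positive definite. First I would recall why symmetry holds: since $G$ is the reproducing kernel of $\HpOmega$, for any $\vx,\vy\in\Omega$ we have $G(\vx,\vy)=(G(\cdot,\vy),G(\cdot,\vx))_{\HpOmega}$ by the reproducing property applied to $f=G(\cdot,\vx)\in\HpOmega$; exchanging the roles of $\vx$ and $\vy$ and using the symmetry of the inner product gives $G(\vx,\vy)=G(\vy,\vx)$.

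Next I would establish positive definiteness in the sense of Definition~\ref{d:PDK}. Fix $N\in\NN$, pairwise distinct $\vx_1,\dots,\vx_N\in\Omega$, and $\vc=(c_1,\dots,c_N)^T\in\RR^N\setminus\{0\}$. Using the reproducing property twice,
\[
\sum_{j=1}^N\sum_{k=1}^N c_jc_k G(\vx_j,\vx_k)
=\sum_{j=1}^N\sum_{k=1}^N c_jc_k (G(\cdot,\vx_j),G(\cdot,\vx_k))_{\HpOmega}
=\Bigl\| \sum_{j=1}^N c_j G(\cdot,\vx_j)\Bigr\|_{\HpOmega}^2\geq 0.
\]
To get strict positivity I must rule out that $g:=\sum_{j=1}^N c_j G(\cdot,\vx_j)$ is the zero element of $\HpOmega$. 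If $\norm{g}_{\HpOmega}=0$ then for every $f\in\HpOmega$ the reproducing property gives $\sum_{j=1}^N c_j f(\vx_j)=(g,f)_{\HpOmega}=0$; in particular, since $\HpOmega\cong\Hil^m_0(\Omega)$ and $m>d/2$, the space $\HpOmega$ contains $\Cont_0^\infty(\Omega)$, which certainly separates the finitely many distinct points $\vx_1,\dots,\vx_N$ (one can pick a bump function supported near $\vx_1$ and vanishing at the others, etc.). Hence $c_j=0$ for all $j$, contradicting $\vc\neq 0$. Therefore the quadratic form is strictly positive and $G$ is positive definite on $\Omega$.

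The main (and only mild) obstacle is the strict-positivity step: one needs to know that the point evaluations $\{f\mapsto f(\vx_j)\}_{j=1}^N$ are linearly independent functionals on $\HpOmega$, equivalently that $\HpOmega$ contains enough functions to separate points. This is exactly where the hypothesis $m>d/2$ is used (via the Sobolev embedding $\Hil^m(\Omega)\hookrightarrow\Cont(\overline{\Omega})$ already invoked in the proof of Theorem~\ref{t:RKHS-HpOmega}) and where $\HpOmega\supseteq\Cont_0^\infty(\Omega)$ matters; the rest is the standard Gram-matrix argument. I would also remark that this shows $G$ is in fact \emph{strictly} positive definite in the sense that the kernel matrix $(G(\vx_j,\vx_k))_{j,k}$ is nonsingular for any distinct nodes, which is precisely what is needed for the interpolation problem $s_{f,X}=\sum_j c_j G(\cdot,\vx_j)$ to be uniquely solvable.
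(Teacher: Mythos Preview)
Your proof is correct and follows essentially the same route as the paper: write the quadratic form as $\bigl\|\sum_j c_j G(\cdot,\vx_j)\bigr\|_{\HpOmega}^2$, and for strictness test against $\gamma\in\Cont_0^\infty(\Omega)\subseteq\HpOmega$ to force all $c_j=0$. The only cosmetic difference is that you invoke the reproducing property of Theorem~\ref{t:RKHS-HpOmega} directly to get $\sum_j c_j\gamma(\vx_j)=0$, whereas the paper re-derives this identity via $\langle LG(\cdot,\vx_j),\gamma\rangle_\Omega=\langle\delta_{\vx_j},\gamma\rangle_\Omega$; both amount to the same computation.
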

\begin{proof}
Fix any set of distinct points $X=\left\{\vx_1,\cdots,\vx_N\right\}\subset\Omega$ and coefficients
$\vc=\left(c_1,\cdots,c_N\right)^T\in\RR^N$, $N\in\NN$. Since $G$ is
the reproducing kernel of the reproducing kernel Hilbert space
$\HpOmega$, $G$ is symmetric and positive semi-definite, i.e.,
\[
\sum_{j=1}^{N}\sum_{k=1}^{N}c_jc_kG(\vx_j,\vx_k)=
(\sum_{j=1}^Nc_jG(\cdot,\vx_j),\sum_{k=1}^Nc_kG(\cdot,\vx_k))_{\HpOmega}
=\norm{\sum_{j=1}^Nc_jG(\cdot,\vx_j)}_{\HpOmega}^2\geq0.
\]
To get strict positive definiteness we assume $\sum_{j=1}^Nc_jG(\cdot,\vx_j)=0$. For any $\gamma\in\DOmega$,
\[
\sum_{j=1}^Nc_j\gamma(\vx_j)=
\sum_{j=1}^Nc_j\langle\delta_{\vx_j},\gamma\rangle_{\Omega}
=\sum_{j=1}^Nc_j\langle LG(\cdot,\vx_j),\gamma\rangle_{\Omega}
=(\sum_{j=1}^Nc_jG(\cdot,\vx_j),\gamma)_{\vP,\Omega}=0.
\]
To show that $c_j=0$, $j=1,\cdots,N$, we pick an arbitrary $\vx_j \in X$ and construct $\gamma_j \in \DOmega$ such that
$\gamma_j$ vanishes on $X\backslash\{\vx_j\}$, but
$\gamma_j(\vx_j)\neq0$. Therefore
\[
\sum_{j=1}^N\sum_{k=1}^{N}c_jc_kG(\vx_j,\vx_k)>0,\quad{}\text{when
}\vc\neq0.
\]

$\qed$
\end{proof}

Since
$G(\cdot,\vy)\in\Cont(\overline{\Omega})$ for each $\vy\in\Omega$, $G$ is
uniformly continuous on $\Omega$ which implies that $G\in\Leb_2(\Omega\times\Omega)$. According to Mercer's
theorem~\cite[Theorem~13.5]{Fas07}, there is an orthonormal basis
$\{e_p\}_{p=1}^{\infty}$ of $\Leb_2(\Omega)$ and a positive sequence
$\{\lambda_p\}_{p=1}^{\infty}$ such that
$G(\vx,\vy)=\sum_{p=1}^{\infty}\lambda_pe_p(\vx)e_p(\vy)$ and
$(G(\cdot,\vy),e_p)_{\Omega}=\lambda_pe_p(\vy)$, $\vx,\vy\in\Omega$,
$p\in\NN$. 
According to Proposition~\ref{p:RKHS-Ltwo}, we can use the technology of the proof of \cite[Proposition~10.29]{Wen05} 
to verify $\{\sqrt{\lambda_{p}}e_p\}_{p=1}^{\infty}$ is an orthonormal basis
of $\HpOmega$. (We firstly show that $\{\sqrt{\lambda_{p}}e_p\}_{p=1}^{\infty}$ is an orthonormal subset
of $\HpOmega$. Next we can verify that it is complete.)

\begin{Proposition}\label{p:W-eig-L-B-eig}
If $\{\lambda_p\}_{p=1}^{\infty}\subset\RR^{+}$ and $\{e_p\}_{p=1}^{\infty}$ are
the eigenvalues and eigenfunctions of $G$, then
$\{\lambda_p^{-1}\}_{p=1}^{\infty}$ and $\{e_p\}_{p=1}^{\infty}$ are
the eigenvalues and eigenfunctions of $L$ with homogeneous boundary
conditions given by $\vB$. Moreover,
$\{\sqrt{\lambda_{p}}e_p\}_{p=1}^{\infty}$ is an orthonormal basis
of $\HpOmega$ whenever $\{e_p\}_{p=1}^{\infty}$
is an orthonormal basis of $\Leb_2(\Omega)$.
\end{Proposition}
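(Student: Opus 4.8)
The plan is to prove the eigenvalue/eigenfunction correspondence first and then deduce the orthonormal basis statement. Fix $p\in\NN$. By Definition~\ref{d:W-eig} we have $(G(\cdot,\vy),e_p)_{\Omega}=\lambda_p e_p(\vy)$ for all $\vy\in\Omega$; since $\lambda_p>0$ this lets us write $e_p(\vy)=\lambda_p^{-1}(G(\cdot,\vy),e_p)_{\Omega}$. Because $G(\cdot,\vy)\in\HpOmega$ for each $\vy$ (Theorem~\ref{t:RKHS-HpOmega}), the right-hand side is $\lambda_p^{-1}$ times a function in $\HpOmega$; thus $e_p\in\HpOmega$, which by Lemma~\ref{l:B-semi-norm} (via $\HpOmega\cong\Hil^m_0(\Omega)$) immediately gives $\vB e_p=\v0$, the homogeneous boundary condition. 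It remains to verify $Le_p=\lambda_p^{-1}e_p$ in $\Omega$. First I would test against an arbitrary $\gamma\in\DOmega$: using $e_p=\lambda_p^{-1}\Integral_{G,\Omega}e_p$ and the reproducing/adjoint relationship, $\langle Le_p,\gamma\rangle_\Omega=(e_p,\gamma)_{\vP,\Omega}$ (this uses the by-now-standard manipulation $\langle P_j^\ast P_j e_p,\gamma\rangle_\Omega=(P_j e_p,P_j\gamma)_\Omega$ valid because $P_je_p\in\Leb_2(\Omega)$ and $P_j\gamma\in\DOmega$). Then, writing $e_p=\lambda_p^{-1}\Integral_{G,\Omega}e_p$ and invoking Proposition~\ref{p:RKHS-Ltwo} with $\Hilbert(\Omega)=\HpOmega$ (whose reproducing kernel is $G$, by Theorem~\ref{t:RKHS-HpOmega}), we get $(e_p,\Integral_{G,\Omega}g)_{\HpOmega}=(e_p,g)_\Omega$ for $g\in\Leb_2(\Omega)$; taking $g=e_p$ and combining yields $(e_p,\gamma)_{\vP,\Omega}=\lambda_p^{-1}(e_p,\gamma)_\Omega=\lambda_p^{-1}\langle e_p,\gamma\rangle_\Omega$ for all $\gamma\in\DOmega$, i.e.\ $Le_p=\lambda_p^{-1}e_p$ as distributions. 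Alternatively, apply $L$ directly to $G(\vx,\vy)=\sum_p\lambda_p e_p(\vx)e_p(\vy)$ using $LG(\cdot,\vy)=\delta_\vy$ and the Mercer expansion — but justifying termwise differentiation is delicate, so the weak formulation above is cleaner.

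For the basis statement, suppose $\{e_p\}_{p=1}^\infty$ is an orthonormal basis of $\Leb_2(\Omega)$. As already remarked in the text just before the proposition, the combination of Proposition~\ref{p:RKHS-Ltwo} and the argument of \cite[Proposition~10.29]{Wen05} shows $\{\sqrt{\lambda_p}e_p\}_{p=1}^\infty$ is an orthonormal basis of $\HpOmega$; I would reproduce that argument here. For orthonormality, using Proposition~\ref{p:RKHS-Ltwo} with $g=e_q\in\Leb_2(\Omega)$ and $\Integral_{G,\Omega}e_q=\lambda_q e_q$, we have $(\sqrt{\lambda_p}e_p,\sqrt{\lambda_q}e_q)_{\HpOmega}=\sqrt{\lambda_p}\sqrt{\lambda_q}\,\lambda_q^{-1}(e_p,\Integral_{G,\Omega}e_q)_{\HpOmega}=\sqrt{\lambda_p/\lambda_q}\,(e_p,e_q)_\Omega=\delta_{pq}$. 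For completeness, if $f\in\HpOmega$ is orthogonal to every $\sqrt{\lambda_p}e_p$ in the $\HpOmega$-inner product, then by the same adjoint identity $(f,e_p)_\Omega=\lambda_p^{-1}(f,\Integral_{G,\Omega}e_p)_{\HpOmega}=\lambda_p^{-1/2}(f,\sqrt{\lambda_p}e_p)_{\HpOmega}=0$ for all $p$; since $\{e_p\}$ is complete in $\Leb_2(\Omega)$ and $f\in\HpOmega\subseteq\Leb_2(\Omega)$, this forces $f=0$ in $\Leb_2(\Omega)$, hence $f=0$ in $\HpOmega$ (the embedding $\HpOmega\hookrightarrow\Leb_2(\Omega)$ is injective).

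\textbf{Main obstacle.} The routine parts are the adjoint bookkeeping and the Hilbert-space orthonormality/completeness computations. The one genuinely delicate point is establishing $Le_p=\lambda_p^{-1}e_p$ \emph{in the distributional sense in $\Omega$} rather than merely in $\Leb_2$: one must be careful that the identity $\langle P_j^\ast P_j e_p,\gamma\rangle_\Omega=(P_je_p,P_j\gamma)_\Omega$ is legitimate (it is, since $P_je_p\in\Leb_2(\Omega)$ for $e_p\in\HpOmega\subseteq\Hil^m(\Omega)$ and $\gamma\in\DOmega$), and that one is entitled to invoke Proposition~\ref{p:RKHS-Ltwo} with the specific identification $\Hilbert(\Omega)=\HpOmega$, which in turn requires knowing $G\in\Leb_2(\Omega\times\Omega)$ is symmetric positive definite on the compact set $\overline{\Omega}$ — all of which is supplied by Corollary~\ref{c:PDK-HpOmega} and the Mercer discussion preceding the proposition. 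Once those hypotheses are lined up, the weak formulation closes cleanly without any termwise-differentiation of the Mercer series.
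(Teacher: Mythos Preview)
Your overall strategy is sound and reaches the same conclusion, but it differs from the paper's route and has two small execution slips worth flagging.

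\textbf{Different route for $Le_p=\lambda_p^{-1}e_p$.} The paper does not pass through the $\HpOmega$-inner product at all. It computes $\langle Le_p,\gamma\rangle_\Omega=(e_p,L^\ast\gamma)_\Omega$, substitutes $e_p(\vy)=\lambda_p^{-1}(G(\cdot,\vy),e_p)_\Omega$, applies Fubini to swap the $\vx$- and $\vy$-integrals, and then uses $LG(\cdot,\vx)=\delta_{\vx}$ directly. This is a one-line integral computation that avoids invoking Proposition~\ref{p:RKHS-Ltwo} for the eigenvalue identity. Your approach---showing $\langle Le_p,\gamma\rangle_\Omega=(e_p,\gamma)_{\HpOmega}$ and then reducing $(e_p,\gamma)_{\HpOmega}$ to $\lambda_p^{-1}(e_p,\gamma)_\Omega$ via the adjoint identity---is a legitimate alternative that leans more on the abstract RKHS machinery, but note the correction below.

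\textbf{Two slips.} First, your claim that ``$G(\cdot,\vy)\in\HpOmega$ for each $\vy$'' implies $e_p=\lambda_p^{-1}(G(\cdot,\vy),e_p)_\Omega\in\HpOmega$ is not a valid inference: the membership $G(\cdot,\vy)\in\HpOmega$ concerns the first variable, whereas you need the integral (a function of $\vy$) to lie in $\HpOmega$. The correct justification is exactly Proposition~\ref{p:RKHS-Ltwo} (which you do cite later), and this is how the paper argues $e_p\in\HpOmega$ as well. Second, in applying the adjoint identity $(f,\Integral_{G,\Omega}g)_{\HpOmega}=(f,g)_\Omega$, taking $g=e_p$ with $f=e_p$ only yields $\|e_p\|_{\HpOmega}^2=\lambda_p^{-1}\|e_p\|_\Omega^2$, which says nothing about $\gamma$. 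You must instead take $f=\gamma\in\DOmega\subset\HpOmega$ and $g=e_p$, so that $(\gamma,\Integral_{G,\Omega}e_p)_{\HpOmega}=(\gamma,e_p)_\Omega$ together with $\Integral_{G,\Omega}e_p=\lambda_p e_p$ gives the needed $(e_p,\gamma)_{\HpOmega}=\lambda_p^{-1}(e_p,\gamma)_\Omega$.

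Your treatment of the orthonormal-basis statement is exactly what the paper intends (it simply refers to the argument of \cite[Proposition~10.29]{Wen05}).
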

%
\begin{proof}
According to Fubini's theorem~\cite[Theorem~12.41]{HunNac05}, for each fixed
$p\in\NN$ and any $\gamma\in\DOmega$,
\[
\begin{split}
&\langle Le_p,\gamma \rangle_{\Omega}=(e_p,L^{\ast}\gamma)_{\Omega}
=\int_{\Omega}e_p(\vy)(L^{\ast}\gamma)(\vy)\ud\vy\\
=&\int_{\Omega}\lambda_p^{-1}(G(\cdot,\vy),e_p)_{\Omega}(L^{\ast}\gamma)(\vy)\ud\vy
=\int_{\Omega}\int_{\Omega}\lambda_p^{-1}G(\vx,\vy)e_p(\vx)(L^{\ast}\gamma)(\vy)\ud\vx\ud\vy\\
=&\int_{\Omega}\lambda_p^{-1}e_p(\vx)\left(G(\vx,\cdot),L^{\ast}\gamma\right)_{\Omega}\ud\vx
=\int_{\Omega}\lambda_p^{-1}e_p(\vx)\langle G(\cdot,\vx),L^{\ast}\gamma\rangle_{\Omega}\ud\vx\\
=&\int_{\Omega}\lambda_p^{-1}e_p(\vx)\langle
LG(\cdot,\vx),\gamma\rangle_{\Omega}\ud\vx
=\int_{\Omega}\lambda_p^{-1}e_p(\vx)\langle\delta_{\vx},\gamma\rangle_{\Omega}\ud\vx\\
=&\int_{\Omega}\lambda_p^{-1}e_p(\vx)\gamma(\vx)\ud\vx
=\langle\lambda_p^{-1}e_p,\gamma\rangle_{\Omega}.
\end{split}
\]
This shows that $Le_p=\lambda_{p}^{-1}e_p$.

According to Proposition~\ref{p:RKHS-Ltwo}, the integral operator $\Integral_{G,\Omega}$
is a continuous map from $\Leb_2(\Omega)$ to $\HpOmega$. Since
$\lambda_pe_p(\vy)=(G(\cdot,\vy),e_p)_{\Omega}=(\Integral_{G,\Omega}e_p)(\vy)$,
$\vy\in\Omega$, we can conclude that $e_p\in\HpOmega$. This implies that $\vB
e_p=\v0$, $p\in\NN$. Therefore $\{\lambda_p^{-1}\}_{p=1}^{\infty}$ and $\{e_p\}_{p=1}^{\infty}$
are the eigenvalues and eigenfunctions
of $L$ with homogeneous boundary conditions given by $\vB$.

\qed
\end{proof}

\begin{Proposition}\label{p:L-B-eig-W-eig}
If $\{\mu_p\}_{p=1}^{\infty}\subset\RR^{+}$ and $\{e_p\}_{p=1}^{\infty}$ are the eigenvalues and eigenfunctions
of $L$ with homogeneous boundary conditions given by
$\vB$, then $\{\mu_p^{-1}\}_{p=1}^{\infty}$ and $\{e_p\}_{p=1}^{\infty}$
are the eigenvalues and eigenfunctions
of $G$. Moreover, if $\{e_p\}_{p=1}^{\infty}$ is
an orthonormal basis of $\Leb_2(\Omega)$, then
\[
G(\vx,\vy)=\sum_{p=1}^{\infty}\mu_p^{-1}e_p(\vx)e_p(\vy),\quad{}
\vx,\vy\in\Omega.
\]
\end{Proposition}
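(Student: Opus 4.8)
The plan is to run the argument of Proposition~\ref{p:W-eig-L-B-eig} in reverse, using the already-established fact that $G$ is the reproducing kernel of $\HpOmega$ together with Proposition~\ref{p:RKHS-Ltwo}. First I would observe that, since $\{\mu_p\}\subset\RR^+$ and $Le_p=\mu_pe_p$ with $\vB e_p=\v0$, each $e_p$ lies in $\Null(L)^\perp$-type position only in the trivial sense; more usefully, $e_p\in\Hil^m(\Omega)$ with $\vB e_p=\v0$ forces $e_p\in\HpOmega$ by Lemma~\ref{l:B-semi-norm}. Then for any $\gamma\in\DOmega$ I would compute $\langle Le_p,\gamma\rangle_\Omega$ via the $\vP$-semi-inner product exactly as in the proof of Theorem~\ref{t:RKHS-HpOmega}: $\mu_p(e_p,\gamma)_\Omega=\langle Le_p,\gamma\rangle_\Omega=(e_p,\gamma)_{\vP,\Omega}=(G(\cdot,\cdot)\text{-reproducing})\ldots$, i.e. I would use $(e_p,\gamma)_{\vP,\Omega}=(e_p,G(\cdot,\vy))_{\vP,\Omega}$-type manipulations. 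The cleanest route: for fixed $\vy$, apply the reproducing property $(G(\cdot,\vy),e_p)_{\HpOmega}=e_p(\vy)$, and separately compute $(G(\cdot,\vy),e_p)_{\HpOmega}=(G(\cdot,\vy),e_p)_{\vP,\Omega}$. Approximating $e_p$ by $\{\gamma_k\}\subset\DOmega$ in the $\Hil^m$-norm (possible since $e_p\in\HpOmega\cong\Hil^m_0(\Omega)$) and integrating by parts, $(G(\cdot,\vy),e_p)_{\vP,\Omega}=\lim_k\langle P_j^\ast P_j G(\cdot,\vy),\gamma_k\rangle$ summed over $j$, which equals $\lim_k\langle LG(\cdot,\vy),\gamma_k\rangle_\Omega=\lim_k\gamma_k(\vy)$; but it also equals $\lim_k\langle L\gamma_k,G(\cdot,\vy)\rangle$-free manipulation giving $(Le_p,G(\cdot,\vy)\text{-test})$. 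The upshot is $(\Integral_{G,\Omega}(Le_p))(\vy)=e_p(\vy)$, hence $\mu_p(\Integral_{G,\Omega}e_p)(\vy)=e_p(\vy)$, i.e. $(\Integral_{G,\Omega}e_p)(\vy)=(G(\cdot,\vy),e_p)_\Omega=\mu_p^{-1}e_p(\vy)$. This is exactly the statement that $\{\mu_p^{-1}\}$ and $\{e_p\}$ are eigenvalues and eigenfunctions of $G$ in the sense of Definition~\ref{d:W-eig}.

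For the series representation, I would invoke Mercer's theorem as already done in the paragraph preceding Proposition~\ref{p:W-eig-L-B-eig}: $G$ is symmetric positive definite (Corollary~\ref{c:PDK-HpOmega}) and lies in $\Leb_2(\Omega\times\Omega)$ since $G(\cdot,\vy)\in\Cont(\overline\Omega)$, so Mercer gives an $\Leb_2(\Omega)$-orthonormal basis of eigenfunctions with positive eigenvalues and the uniformly convergent expansion $G(\vx,\vy)=\sum_p\lambda_p\tilde e_p(\vx)\tilde e_p(\vy)$. If $\{e_p\}$ is assumed to already be an $\Leb_2$-orthonormal basis, I would argue that the eigenspaces of $\Integral_{G,\Omega}$ are exactly spanned by the $e_p$ with eigenvalues $\mu_p^{-1}$ — using that $\Integral_{G,\Omega}$ is compact, self-adjoint, injective on $\Range$, and that by the first half of the proof each $e_p$ is already an eigenfunction with eigenvalue $\mu_p^{-1}>0$ — so that the Mercer expansion may be rewritten in terms of $\{e_p\}$ and $\{\mu_p^{-1}\}$, yielding $G(\vx,\vy)=\sum_{p=1}^{\infty}\mu_p^{-1}e_p(\vx)e_p(\vy)$.

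The main obstacle I anticipate is the justification that the given family $\{e_p\}$, assumed to be an $\Leb_2$-orthonormal basis, genuinely \emph{exhausts} the spectral decomposition of $\Integral_{G,\Omega}$ — i.e. that there are no ``extra'' eigenfunctions of $G$ outside the list and that the pairing $e_p\leftrightarrow\mu_p^{-1}$ is the complete one. This requires knowing that $\Integral_{G,\Omega}$ is injective (equivalently, that $\Range(\Integral_{G,\Omega})$ is dense in $\HpOmega$, which is part of Proposition~\ref{p:RKHS-Ltwo}) so that $0$ is not an eigenvalue, and that a self-adjoint compact operator whose eigenfunctions include a complete orthonormal system is determined by that system. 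A secondary technical point is the interchange of integration and the distributional action of $L^\ast$ in computing $\langle Le_p,\gamma\rangle_\Omega$, handled by Fubini's theorem exactly as in Proposition~\ref{p:W-eig-L-B-eig}; I would simply cite that computation rather than repeat it. Convergence of the Mercer series is absolute and uniform on $\overline\Omega\times\overline\Omega$, again by the cited Mercer theorem, so no further estimates are needed there.
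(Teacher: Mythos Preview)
Your approach is essentially the same as the paper's: use the reproducing property $(G(\cdot,\vy),e_p)_{\HpOmega}=e_p(\vy)$, then compute $(G(\cdot,\vy),e_p)_{\vP,\Omega}$ a second way by moving $L$ onto $e_p$ (where $Le_p=\mu_p e_p\in\Leb_2$), and equate. The paper's proof is two lines; for the expansion it just says ``follows immediately'' from Mercer, without the exhaustion discussion you add.

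One technical point worth cleaning up: you approximate $e_p$ by $\gamma_k\in\DOmega$ and then want $\lim_k(G(\cdot,\vy),L\gamma_k)_\Omega=(G(\cdot,\vy),Le_p)_\Omega$. But $\gamma_k\to e_p$ only in $\Hil^m(\Omega)$, and $L$ has order $2m$, so $L\gamma_k$ need not converge to $Le_p$ in any topology that lets you pass the limit through the $\Leb_2$ pairing with $G(\cdot,\vy)$. The paper (implicitly) approximates $G(\cdot,\vy)\in\HpOmega\cong\Hil^m_0(\Omega)$ by test functions $\tilde\gamma_k$ instead: then
\[
(G(\cdot,\vy),e_p)_{\vP,\Omega}=\lim_k(\tilde\gamma_k,e_p)_{\vP,\Omega}
=\lim_k\langle Le_p,\tilde\gamma_k\rangle_\Omega
=\lim_k(\mu_pe_p,\tilde\gamma_k)_\Omega
=\mu_p(G(\cdot,\vy),e_p)_\Omega,
\]
where the last limit is just $\Leb_2$ convergence of $\tilde\gamma_k\to G(\cdot,\vy)$. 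This sidesteps the convergence issue entirely. Your ``upshot'' $(\Integral_{G,\Omega}(Le_p))(\vy)=e_p(\vy)$ is correct, but the route to it should go through approximating $G(\cdot,\vy)$ rather than $e_p$.

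Your exhaustion worry for the Mercer expansion is easily dispatched: once you know each $e_p$ is an eigenfunction of the compact self-adjoint operator $\Integral_{G,\Omega}$ with eigenvalue $\mu_p^{-1}$, and $\{e_p\}$ is already an orthonormal basis of $\Leb_2(\Omega)$, the spectral theorem gives the expansion directly; there is nothing left to exhaust.
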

\begin{proof}
According to Theorem~\ref{t:RKHS-HpOmega} $G$ is a reproducing kernel, i.e., we have
\[
(G(\cdot,\vy),e_p)_{\HpOmega}=e_p(\vy),\quad{}\vy\in\Omega,\quad{}p\in\NN.
\]
Applying the same method as in Equation~(\ref{e:W-eig-L-B-eig}), we obtain
\[
(G(\cdot,\vy),e_p)_{\HpOmega}=\sum_{j=1}^{\np}(P_jG(\cdot,\vy),P_je_p)_{\Omega}
=(G(\cdot,\vy),\mu_pe_p)_{\Omega}.
\]
Combining the above equations, we can easily verify that
$(G(\cdot,\vy),e_p)_{\Omega}=\mu_p^{-1}e_p(\vy)$. The second claim follows immediately.

\qed
\end{proof}


\subsection{Green Kernels with Nonhomogeneous Boundary Conditions}\label{s:Green-Nonhomo}

\begin{theorem}\label{t:RKHS-HbAOmega}
The space $\HbAOmega$ of Definition~\ref{d:HbAOmega} is a reproducing-kernel Hilbert space with
reproducing kernel
\[
R(\vx,\vy):=\sum_{k=1}^{\na}a_k\psi_k(\vx)\psi_k(\vy),
\quad{}\vx,\vy\in\Omega.
\]
In particular, when $\na=0$ or $\Aset=\{0;0\}$ then $R:=0$.
\end{theorem}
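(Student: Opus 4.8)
The plan is to verify directly that $R(\vx,\vy)=\sum_{k=1}^{\na}a_k\psi_k(\vx)\psi_k(\vy)$ satisfies the two defining properties of a reproducing kernel for $\HbAOmega$ as given in Definition~\ref{d:RKHS}. Since $\HbAOmega=\Span\{\psi_1,\cdots,\psi_{\na}\}$ is finite-dimensional, it is automatically a (separable) Hilbert space, so no completeness issues arise and the degenerate case $\na=0$ is trivial with $R\equiv0$.

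First I would check property (i): for each fixed $\vy\in\Omega$, the function $R(\cdot,\vy)=\sum_{k=1}^{\na}\bigl(a_k\psi_k(\vy)\bigr)\psi_k$ is a finite linear combination of the basis functions $\psi_1,\cdots,\psi_{\na}$, hence lies in $\HbAOmega$. Second, I would verify the reproducing property (ii). Take any $f\in\HbAOmega$ and write $f=\sum_{l=1}^{\na}\hat{f}_l\psi_l$ in terms of its Fourier coefficients with respect to the orthonormal (in the $\vB$-semi-inner product) set $\{\psi_l\}$. From property (i) we know $R(\cdot,\vy)\in\HbAOmega$, and its Fourier coefficients are $a_k\psi_k(\vy)$ for $k=1,\cdots,\na$. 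By the definition of the $\HbAOmega$-inner product,
\[
(R(\cdot,\vy),f)_{\HbAOmega}=\sum_{k=1}^{\na}\frac{\bigl(a_k\psi_k(\vy)\bigr)\hat{f}_k}{a_k}=\sum_{k=1}^{\na}\psi_k(\vy)\hat{f}_k=f(\vy),
\]
which is exactly the reproducing identity. The symmetry $R(\vx,\vy)=R(\vy,\vx)$ is immediate from the symmetric form of the sum.

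This argument is essentially routine once one correctly identifies the Fourier coefficients of $R(\cdot,\vy)$ with respect to the given orthonormal basis; there is no real obstacle. The only point requiring a small amount of care is the bookkeeping with the $a_k$ weights: the inner product on $\HbAOmega$ divides by $a_k$, which is precisely what cancels the factor $a_k$ built into $R$, so the weights must be threaded through consistently. One should also note at the outset that the $\HbAOmega$-inner product is well-defined (as already observed after Definition~\ref{d:HbAOmega}, using that the $\vB$-semi-inner product is genuinely an inner product on $\Null(L)$ by Lemma~\ref{l:P-B-Eqn}), so that the Fourier coefficient representation $f=\sum_k\hat{f}_k\psi_k$ with $\hat{f}_k=(f,\psi_k)_{\vB,\partial\Omega}$ is unambiguous.
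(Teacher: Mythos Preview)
Your proof is correct and essentially identical to the paper's own argument: both verify property~(i) by writing $R(\cdot,\vy)=\sum_{k=1}^{\na}(a_k\psi_k(\vy))\psi_k\in\HbAOmega$ and then establish the reproducing property~(ii) by the same one-line computation with the Fourier coefficients, in which the $a_k$ factors cancel. The additional remarks you include (symmetry, the degenerate case, and the well-definedness of the inner product) are accurate but not needed beyond what the paper already states around Definition~\ref{d:HbAOmega}.
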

\begin{proof}
We fix any $\vy\in\Omega$. It is obvious that $R(\cdot,\vy)=\sum_{k=1}^{\na}(a_k\psi_k(\vy))\psi_k\in\HbAOmega$.

We now turn to the reproducing property. Let any $f=\sum_{k=1}^{\na}\hat{f}_k\psi_k\in\HbAOmega$. Then
\[
(R(\cdot,\vy),f)_{\HbAOmega}=\sum_{k=1}^{\na}\frac{a_k\psi_k(\vy)\hat{f}_k}{a_k}
=\sum_{k=1}^{\na}\hat{f}_k\psi_k(\vy)=f(\vy),\quad{}\vy\in\Omega.
\]

$\qed$
\end{proof}

Our main theorem now follows directly from Theorems~\ref{t:HpbA},~\ref{t:RKHS-HpOmega}
and~\ref{t:RKHS-HbAOmega}.
\begin{theorem}\label{t:RKHS-HpbAOmega}
Suppose that there is a Green kernel $G$ of $L$ with homogeneous boundary
conditions given by $\vB$. Then the direct sum space $\HpbAOmega$ (see Definition~\ref{d:HpbAOmega}) is a reproducing-kernel Hilbert space with
reproducing kernel
\[
K(\vx,\vy):=G(\vx,\vy)+R(\vx,\vy),\quad{}\vx,\vy\in\Omega.
\]
Moreover, $\HpbAOmega$ can be embedded into $\Hil^m(\Omega)$.
\end{theorem}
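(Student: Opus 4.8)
The plan is to assemble Theorem~\ref{t:RKHS-HpbAOmega} from the three building blocks cited just before it, using the direct-sum structure of $\HpbAOmega=\HpOmega\oplus\HbAOmega$ established in Definition~\ref{d:HpbAOmega} and Theorem~\ref{t:HpbA}. First I would observe that Theorem~\ref{t:HpbA} already gives us that $\HpbAOmega$ is a separable Hilbert space embedded in $\Hil^m(\Omega)$, so the final ``moreover'' sentence requires no further work. It remains only to verify the two defining properties of a reproducing kernel (Definition~\ref{d:RKHS}) for the candidate $K=G+R$ with respect to the inner product $(\cdot,\cdot)_{\HpbAOmega}=(\cdot,\cdot)_{\HpOmega}+(\cdot,\cdot)_{\HbAOmega}$ applied to the unique decomposition $f=f_P+f_B$.

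The key steps: (i) Membership. Fix $\vy\in\Omega$. By Theorem~\ref{t:RKHS-HpOmega}, $G(\cdot,\vy)\in\HpOmega$; by Theorem~\ref{t:RKHS-HbAOmega}, $R(\cdot,\vy)\in\HbAOmega$. Hence $K(\cdot,\vy)=G(\cdot,\vy)+R(\cdot,\vy)\in\HpOmega\oplus\HbAOmega=\HpbAOmega$, and moreover this is precisely the direct-sum decomposition of $K(\cdot,\vy)$, i.e.\ $K(\cdot,\vy)_P=G(\cdot,\vy)$ and $K(\cdot,\vy)_B=R(\cdot,\vy)$. (ii) Reproduction. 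Take any $f\in\HpbAOmega$ with decomposition $f=f_P+f_B$, $f_P\in\HpOmega$, $f_B\in\HbAOmega$. Then by definition of the direct-sum inner product,
\[
(K(\cdot,\vy),f)_{\HpbAOmega}=(G(\cdot,\vy),f_P)_{\HpOmega}+(R(\cdot,\vy),f_B)_{\HbAOmega}=f_P(\vy)+f_B(\vy)=f(\vy),
\]
where the first reproducing identity is Theorem~\ref{t:RKHS-HpOmega} and the second is Theorem~\ref{t:RKHS-HpbAOmega}'s companion Theorem~\ref{t:RKHS-HbAOmega}. This establishes the reproducing property and completes the proof.

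The one point that needs a little care — and is the closest thing to an obstacle — is making sure the cross-terms genuinely vanish, i.e.\ that applying the $\HpbAOmega$-inner product to $K(\cdot,\vy)=G(\cdot,\vy)+R(\cdot,\vy)$ against $f=f_P+f_B$ really does split as $(G(\cdot,\vy),f_P)_{\HpOmega}+(R(\cdot,\vy),f_B)_{\HbAOmega}$ with no leftover mixed contributions. This is not an extra analytic estimate but simply an appeal to the fact, recorded in Definition~\ref{d:HpbAOmega}, that the $\HpbAOmega$-inner product is \emph{defined} as the sum of the $\HpOmega$- and $\HbAOmega$-inner products applied to the respective components of the unique decomposition; once we know $G(\cdot,\vy)$ sits in the $\HpOmega$ summand and $R(\cdot,\vy)$ in the $\HbAOmega$ summand, the splitting is automatic. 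Thus the whole argument is a short bookkeeping exercise built entirely on Theorems~\ref{t:HpbA},~\ref{t:RKHS-HpOmega}, and~\ref{t:RKHS-HbAOmega}, with no new inequalities required.
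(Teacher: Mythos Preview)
Your proposal is correct and matches the paper's approach exactly: the paper's own proof consists of a single sentence stating that the result follows directly from Theorems~\ref{t:HpbA},~\ref{t:RKHS-HpOmega} and~\ref{t:RKHS-HbAOmega}, and your argument is precisely the natural unpacking of that citation via the direct-sum structure of Definition~\ref{d:HpbAOmega}. No gaps, and nothing to add.
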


By Corollary~\ref{c:PDK-HpOmega} we know that $G$ is a symmetric positive definite kernel, and using similar arguments we can check that $R$ is symmetric positive semi-definite. Together, this allows us to formulate the following corollary.
\begin{corollary}\label{c:PDK-HpbAOmega}
$K$ is a symmetric positive definite kernel on $\Omega$.
\end{corollary}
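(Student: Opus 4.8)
The plan is to establish Corollary~\ref{c:PDK-HpbAOmega} by combining the positive definiteness of $G$ (already proved in Corollary~\ref{c:PDK-HpOmega}) with the positive semi-definiteness of $R$, using the fact that $K=G+R$ is the reproducing kernel of the Hilbert space $\HpbAOmega$ established in Theorem~\ref{t:RKHS-HpbAOmega}. First I would fix an arbitrary $N\in\NN$, pairwise distinct points $X=\{\vx_1,\dots,\vx_N\}\subset\Omega$ and a nonzero coefficient vector $\vc=(c_1,\dots,c_N)^T\in\RR^N$. Since $K$ is a reproducing kernel, symmetry is automatic, and the quadratic form can be rewritten as a squared norm:
\[
\sum_{j=1}^{N}\sum_{k=1}^{N}c_jc_kK(\vx_j,\vx_k)
=\Bigl(\sum_{j=1}^{N}c_jK(\cdot,\vx_j),\sum_{k=1}^{N}c_kK(\cdot,\vx_k)\Bigr)_{\HpbAOmega}
=\Bigl\|\sum_{j=1}^{N}c_jK(\cdot,\vx_j)\Bigr\|_{\HpbAOmega}^2\geq 0,
\]
which gives positive semi-definiteness for free, exactly as in the proof of Corollary~\ref{c:PDK-HpOmega}.

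The substantive part is to rule out the degenerate case, i.e. to show the quadratic form is strictly positive when $\vc\neq\v0$. For this I would argue by contradiction: suppose $s:=\sum_{j=1}^{N}c_jK(\cdot,\vx_j)=0$ in $\HpbAOmega$. Splitting $K=G+R$ and writing $s=s_P+s_B$ according to the direct sum decomposition $\HpbAOmega=\HpOmega\oplus\HbAOmega$, I would use that $G(\cdot,\vx_j)\in\HpOmega$ (since $\vB G(\cdot,\vx_j)=\v0$, by Lemma~\ref{l:B-semi-norm}) while $R(\cdot,\vx_j)=\sum_{k=1}^{\na}(a_k\psi_k(\vx_j))\psi_k\in\HbAOmega$; hence $s_P=\sum_j c_j G(\cdot,\vx_j)$ and $s_B=\sum_j c_j R(\cdot,\vx_j)$ must each vanish. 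From $s_P=0$, the argument in Corollary~\ref{c:PDK-HpOmega} applies verbatim: for any $\gamma\in\DOmega$,
\[
\sum_{j=1}^{N}c_j\gamma(\vx_j)
=\sum_{j=1}^{N}c_j\langle LG(\cdot,\vx_j),\gamma\rangle_{\Omega}
=\Bigl(\sum_{j=1}^{N}c_jG(\cdot,\vx_j),\gamma\Bigr)_{\vP,\Omega}=0,
\]
and choosing, for each fixed index $j$, a test function $\gamma_j\in\DOmega$ vanishing on $X\setminus\{\vx_j\}$ with $\gamma_j(\vx_j)\neq 0$ forces $c_j=0$ for all $j$, contradicting $\vc\neq\v0$. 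Therefore the quadratic form is strictly positive, and $K$ is a symmetric positive definite kernel on $\Omega$.

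The main obstacle — though it is a mild one — is the bookkeeping of the direct-sum decomposition: one must be careful that $G(\cdot,\vx_j)$ really lands in $\HpOmega$ and $R(\cdot,\vx_j)$ in $\HbAOmega$, so that the vanishing of $s$ in $\HpbAOmega$ actually yields the vanishing of the $\HpOmega$-component $s_P=\sum_j c_j G(\cdot,\vx_j)$ separately. Once that is in place, strict positive definiteness of $K$ follows purely from strict positive definiteness of $G$; the semi-definite part $R$ only contributes a nonnegative term and can never destroy it. An alternative, slightly slicker route avoids the decomposition entirely: simply note $\sum_{j,k}c_jc_kK(\vx_j,\vx_k)=\sum_{j,k}c_jc_kG(\vx_j,\vx_k)+\sum_{j,k}c_jc_kR(\vx_j,\vx_k)$, where the first sum is $>0$ by Corollary~\ref{c:PDK-HpOmega} and the second is $\geq 0$ since $R(\vx,\vy)=\sum_{k=1}^{\na}a_k\psi_k(\vx)\psi_k(\vy)$ with $a_k>0$ makes $\sum_{j,k}c_jc_kR(\vx_j,\vx_k)=\sum_{k=1}^{\na}a_k\bigl(\sum_j c_j\psi_k(\vx_j)\bigr)^2\geq 0$. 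I would present this second argument as the clean proof and relegate the reproducing-kernel viewpoint to a remark.
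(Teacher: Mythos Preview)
Your proposal is correct, and the ``slicker route'' you describe at the end --- splitting the quadratic form as $\sum c_jc_kG(\vx_j,\vx_k)+\sum c_jc_kR(\vx_j,\vx_k)$ with the first sum strictly positive by Corollary~\ref{c:PDK-HpOmega} and the second $\geq 0$ since $R(\vx,\vy)=\sum_k a_k\psi_k(\vx)\psi_k(\vy)$ with $a_k>0$ --- is exactly the argument the paper gives (stated in the sentence immediately preceding the corollary). Your first, more elaborate argument via the direct-sum decomposition is also fine but unnecessary here.
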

On the other hand, $K$ may not be positive definite on $\partial\Omega$ (see the min kernel in Example~\ref{exa_Min}).
According to Definition~\ref{d:Green-Nonhomo} we also have
\begin{corollary}\label{c:Green-HpbAOmega}
Let $\Rset:=\left\{\vB R(\cdot,\vy):\vy\in\Omega\right\}$.
Then $K$ is a Green kernel of $L$ with boundary conditions
given by $\vB$ and $\Rset$.
\end{corollary}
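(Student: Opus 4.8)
The plan is to verify directly that $K = G + R$ satisfies the two defining conditions of a Green kernel of $L$ with boundary conditions given by $\vB$ and $\Rset$, as laid out in Definition~\ref{d:Green-Nonhomo}, for the specific choice $\Rset = \{\vB R(\cdot,\vy) : \vy\in\Omega\}$. Fix $\vy\in\Omega$. First I would record that $K(\cdot,\vy)\in\Hil^m(\Omega)$: indeed $G(\cdot,\vy)\in\Hil^m(\Omega)$ by hypothesis (it is a Green kernel with homogeneous boundary conditions), and $R(\cdot,\vy) = \sum_{k=1}^{\na}(a_k\psi_k(\vy))\psi_k$ is a finite linear combination of the functions $\psi_k\in\Null(L)\subseteq\Hil^m(\Omega)$, hence lies in $\Hil^m(\Omega)$ as well; the sum is therefore in $\Hil^m(\Omega)$.

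Next I would check the interior equation $LK(\cdot,\vy) = \delta_{\vy}$ in $\Omega$. By linearity of the distributional operator $L = \vP^{\ast T}\vP$ we have $LK(\cdot,\vy) = LG(\cdot,\vy) + LR(\cdot,\vy)$. The first term equals $\delta_{\vy}$ since $G$ is a Green kernel of $L$ with homogeneous boundary conditions. For the second term, $R(\cdot,\vy)$ is a finite linear combination of the $\psi_k$, each of which lies in $\Null(L) = \{f\in\Hil^m(\Omega): Lf = 0\}$, so $LR(\cdot,\vy) = \sum_{k=1}^{\na}a_k\psi_k(\vy)\,L\psi_k = 0$. Hence $LK(\cdot,\vy) = \delta_{\vy}$ in $\Omega$.

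Finally I would treat the boundary condition. Since the boundary operator $\vB$ is linear, $\vB K(\cdot,\vy) = \vB G(\cdot,\vy) + \vB R(\cdot,\vy) = \v0 + \vB R(\cdot,\vy) = \vB R(\cdot,\vy)$, using that $G$ has homogeneous boundary conditions given by $\vB$. By definition $\vGamma(\cdot,\vy) := \vB R(\cdot,\vy)$ is precisely the element of $\Rset$ indexed by $\vy$, and one checks $\Rset\subseteq\otimes_{j=1}^{\nb}\Leb_2(\partial\Omega)$ because each $B_j: \Hil^m(\Omega)\to\Leb_2(\partial\Omega)$ is a bounded linear operator (Section~\ref{s:Bound}) and $R(\cdot,\vy)\in\Hil^m(\Omega)$. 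Thus $K$ satisfies $\vB K(\cdot,\vy) = \vGamma(\cdot,\vy)$ on $\partial\Omega$ with $\{\vGamma(\cdot,\vy):\vy\in\Omega\} = \Rset$, which is exactly the assertion. This corollary is essentially a bookkeeping consequence of Theorem~\ref{t:RKHS-HpbAOmega} and the definitions; I do not anticipate any genuine obstacle, the only mild point to state carefully being that the $\psi_k$ lie in $\Null(L)$ (not merely $\Null(\vP)$) so that the interior equation for $R$ holds.
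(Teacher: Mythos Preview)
Your proposal is correct and is exactly the direct verification the paper has in mind: the paper does not spell out a proof at all, it simply introduces the corollary with ``According to Definition~\ref{d:Green-Nonhomo} we also have,'' so your check that $K=G+R$ satisfies $K(\cdot,\vy)\in\Hil^m(\Omega)$, $LK(\cdot,\vy)=\delta_{\vy}$ (using $LR(\cdot,\vy)=0$ since $\psi_k\in\Null(L)$), and $\vB K(\cdot,\vy)=\vB R(\cdot,\vy)$ is precisely the intended argument.
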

%
\begin{remark}\label{r:Green-non}
To see that not every Green kernel is a reproducing kernel, assume that $\Phi$ is a Green kernel of the differential operator $L$. Then, according to Corollary~\ref{c:HpbOmega},
$\Phi$ can be uniquely written in the form
\[
\Phi(\vx,\vy)=\Phi_P(\vx,\vy)+\Phi_B(\vx,\vy),\quad{}
\Phi_P(\cdot,\vy)\in\Hil_0^m(\Omega),~\Phi_B(\cdot,\vy)\in\Null(L),\quad{}\vx,\vy\in\Omega.
\]
Therefore we have
\[
\begin{cases}
L\Phi_P(\cdot,\vy)=\delta_{\vy},&\text{in }\Omega,\\
\vB \Phi_P(\cdot,\vy)=\v0,&\text{on }\partial\Omega,
\end{cases}
\quad{}\text{and}\quad{}
\begin{cases}
L\Phi_B(\cdot,\vy)=0,&\text{in }\Omega,\\
\vB \Phi_B(\cdot,\vy)=\vB\Phi(\cdot,\vy),&\text{on }\partial\Omega.
\end{cases}
\]
This means that $\Phi_P$ is a Green kernel of $L$ with homogeneous
boundary conditions given by $\vB$. However, there may be no pair $\Aset$
such that $R=\Phi_B$ even though $\Aset$ is extended to a countable pair set. This shows that $\Phi$ may not be a reproducing kernel of a
reproducing-kernel Hilbert space. For example,
$\Phi(x,y):=-\frac{1}{2}\abs{x-y}$ is the Green kernel of
$L:=-\frac{d^2}{dx^2}$. However, $\phi(x):=\Phi(x,0)$ is only a conditionally
positive definite function of order one and therefore cannot be a reproducing kernel.
\end{remark}

We are now ready to address nonhomogeneous boundary conditions.
Consider a kernel
$\Gamma\in\Leb_2(\partial\Omega\times\Omega)$. Then we can define an
\emph{integral operator}
$\Integral_{\Gamma,\Omega}:\Leb_2(\Omega)\rightarrow\Leb_2(\partial\Omega)$
via the form
\[
(\Integral_{\Gamma,\Omega}f)(\vx):=(\Gamma(\vx,\cdot),f)_{\Omega},\quad{}f\in\Leb_2(\Omega)\text{
and }\vx\in\partial\Omega.
\]

Let $\vGamma$ denote the vector function
$\vGamma(\cdot,\vy)=(\Gamma_1(\cdot,\vy),\cdots,\Gamma_{\nb}(\cdot,\vy))^T:=\vB
K(\cdot,\vy)$ for any $\vy\in\Omega$, i.e., $\Gamma_j(\cdot,\vy)=B_jK(\cdot,\vy)$, $j=1,\cdots,\nb$.
Since $B_jG(\cdot,\vy)=0$, $\vy\in\Omega$, we have
\[
\Gamma_j(\cdot,\vy)=B_jK(\cdot,\vy)=B_jG(\cdot,\vy)+B_jR(\cdot,\vy)
=B_jR(\cdot,\vy)=\sum_{k=1}^{\na}a_k(B_j\psi_k)\psi_k(\vy).
\]
As a consequence we have $\Gamma_j\in\Leb_2(\partial\Omega\times\Omega)$.
\begin{Proposition}\label{p:W-eig-L-B-eig-non}
If $\{\lambda_p\}_{p=1}^{\infty}\subset\RR^+$ and $\{e_p\}_{p=1}^{\infty}$ are the eigenvalues and eigenfunctions of $K$, then
$\{\lambda_p^{-1}\}_{p=1}^{\infty}$ and $\{e_p\}_{p=1}^{\infty}$ are
the eigenvalues and eigenfunctions of $L$ with boundary conditions given by $\vB$ and
\[
\Eset:=\{\veta_p:=(\lambda_p^{-1}\Integral_{\Gamma_1,\Omega}e_p,\cdots,
\lambda_p^{-1}\Integral_{\Gamma_{\nb},\Omega}e_p)^T\}_{p=1}^{\infty},
\]
i.e., $\eta_{p,j}(\vx)=\lambda_p^{-1}(\Gamma_j(\vx,\cdot),e_p)_{\Omega}$, $\vx\in\partial\Omega$. 
Moreover,
$\{\sqrt{\lambda_{p}}e_p\}_{p=1}^{\infty}$ is an orthonormal basis
of $\HpbAOmega$ whenever $\{e_p\}_{p=1}^{\infty}$
is an orthonormal basis of $\Leb_2(\Omega)$.
\end{Proposition}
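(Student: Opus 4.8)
The plan is to mimic the structure of the proof of Proposition~\ref{p:W-eig-L-B-eig}, working first on the differential-equation relationship and then on the orthonormal basis claim. Since $K$ is the reproducing kernel of $\HpbAOmega$ (Theorem~\ref{t:RKHS-HpbAOmega}) and $K\in\Leb_2(\Omega\times\Omega)$ (because $K=G+R$ with $G$ uniformly continuous on $\overline{\Omega}$ and $R$ a finite sum of products of $\Hil^m(\Omega)\subseteq\Cont(\overline{\Omega})$ functions), Proposition~\ref{p:RKHS-Ltwo} applies: the integral operator $\Integral_{K,\Omega}$ maps $\Leb_2(\Omega)$ continuously into $\HpbAOmega$, and its range is dense there. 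Given eigenpairs $\lambda_p,e_p$ of $K$, the identity $\lambda_p e_p(\vy)=(\Integral_{K,\Omega}e_p)(\vy)=(K(\cdot,\vy),e_p)_\Omega$ shows $e_p=\lambda_p^{-1}\Integral_{K,\Omega}e_p\in\HpbAOmega\subseteq\Hil^m(\Omega)$, so each $e_p$ is a legitimate candidate eigenfunction living in the right space.

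The first substantive step is to show $Le_p=\lambda_p^{-1}e_p$ in the distributional sense. I would repeat verbatim the Fubini computation from Proposition~\ref{p:W-eig-L-B-eig}: for $\gamma\in\DOmega$, write $\langle Le_p,\gamma\rangle_\Omega=(e_p,L^\ast\gamma)_\Omega$, insert $e_p=\lambda_p^{-1}(K(\cdot,\cdot),e_p)_\Omega$, swap the order of integration, use $\langle LK(\cdot,\vx),\gamma\rangle_\Omega=\langle\delta_{\vx},\gamma\rangle_\Omega=\gamma(\vx)$, and conclude $\langle Le_p,\gamma\rangle_\Omega=\langle\lambda_p^{-1}e_p,\gamma\rangle_\Omega$. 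This uses only $LK(\cdot,\vx)=\delta_{\vx}$ from Definition~\ref{d:Green-Nonhomo} (via Corollary~\ref{c:Green-HpbAOmega}), exactly as before; the homogeneity of the boundary condition played no role in that part of the earlier argument.

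The second step is to identify the boundary data $\vB e_p$. Here, instead of concluding $\vB e_p=\v0$, I apply the boundary operator $B_j$ (a bounded linear operator $\Hil^m(\Omega)\to\Leb_2(\partial\Omega)$) to $e_p=\lambda_p^{-1}\Integral_{K,\Omega}e_p$. Since $B_j$ commutes with the $\Leb_2(\Omega)$-integration defining $\Integral_{K,\Omega}$ (this is where I would need a short interchange-of-$B_j$-with-the-integral justification, using boundedness of $B_j$ and approximation of $e_p$ in $\Leb_2$, or a Fubini-type argument on $\partial\Omega\times\Omega$), one gets $B_je_p(\vx)=\lambda_p^{-1}(B_j K(\cdot,\vx),\,\text{paired with }e_p)=\lambda_p^{-1}(\Gamma_j(\vx,\cdot),e_p)_\Omega=\lambda_p^{-1}(\Integral_{\Gamma_j,\Omega}e_p)(\vx)=\eta_{p,j}(\vx)$, where $\Gamma_j=B_jK(\cdot,\vy)\in\Leb_2(\partial\Omega\times\Omega)$ as established in the text preceding the Proposition. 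Thus $\vB e_p=\veta_p$, giving precisely the stated set $\Eset$. I expect this interchange step to be the main obstacle: one must be careful that $B_j$, defined by continuous extension from $\Cont^m(\overline{\Omega})$, really can be moved inside the Bochner-type integral $\int_\Omega K(\vx,\cdot)e_p(\vx)\,\ud\vx$; the cleanest route is probably to note $\Integral_{K,\Omega}e_p\in\Hil^m(\Omega)$ with $\Hil^m$-convergence of the partial sums when $e_p$ is expanded in an $\Leb_2(\Omega)$-orthonormal basis, then pass $B_j$ through using its $\Hil^m(\Omega)\to\Leb_2(\partial\Omega)$ continuity, and finally recognize the resulting boundary integral as $\Integral_{\Gamma_j,\Omega}e_p$.

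For the final claim, assume $\{e_p\}_{p=1}^\infty$ is an orthonormal basis of $\Leb_2(\Omega)$. Following the argument sketched for $\HpOmega$ after Corollary~\ref{c:PDK-HpOmega} (the technique of \cite[Proposition~10.29]{Wen05}), I would show $\{\sqrt{\lambda_p}e_p\}$ is orthonormal in $\HpbAOmega$ and complete. Orthonormality: using Proposition~\ref{p:RKHS-Ltwo}'s adjoint identity $(f,\Integral_{K,\Omega}g)_{\HpbAOmega}=(f,g)_\Omega$ for $f\in\HpbAOmega$, $g\in\Leb_2(\Omega)$, compute $(\sqrt{\lambda_p}e_p,\sqrt{\lambda_q}e_q)_{\HpbAOmega}=\sqrt{\lambda_p\lambda_q}\,\lambda_q^{-1}(e_p,\Integral_{K,\Omega}e_q)_{\HpbAOmega}=\sqrt{\lambda_p/\lambda_q}\,(e_p,e_q)_\Omega=\delta_{pq}$. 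Completeness: if $f\in\HpbAOmega$ is $\HpbAOmega$-orthogonal to every $\sqrt{\lambda_p}e_p$, then $(f,e_p)_\Omega=\lambda_p^{-1}(f,\Integral_{K,\Omega}e_p)_{\HpbAOmega}=0$ for all $p$, so $f=0$ in $\Leb_2(\Omega)$, hence $f=0$ in $\HpbAOmega$ since $\HpbAOmega$ embeds in $\Hil^m(\Omega)\subseteq\Leb_2(\Omega)$ (Theorem~\ref{t:HpbA}). This completes the proof.
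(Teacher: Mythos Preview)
Your proposal is correct and follows essentially the same architecture as the paper: the distributional identity $Le_p=\lambda_p^{-1}e_p$ is obtained by the identical Fubini argument, and the orthonormal-basis claim is handled via the adjoint identity of Proposition~\ref{p:RKHS-Ltwo} (which the paper invokes only implicitly by referring back to the post--Corollary~\ref{c:PDK-HpOmega} discussion).

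The only notable difference is in how the interchange $B_j\bigl(\Integral_{K,\Omega}e_p\bigr)=\Integral_{\Gamma_j,\Omega}e_p$ is justified. You correctly flag this as the delicate point and propose to argue via $\Hil^m$-continuity of $B_j$ together with an approximation of $e_p$; the paper instead expands $K$ itself via Mercer's theorem as $K(\vx,\vy)=\sum_k\nu_k\varphi_k(\vx)\varphi_k(\vy)$, notes that the truncations $K_n(\cdot,\vy)$ converge to $K(\cdot,\vy)$ in the $\HpbAOmega$-norm (hence in the $\Hil^m(\Omega)$-norm by the embedding of Theorem~\ref{t:HpbA}), and then pushes $B_j$ through the resulting termwise-convergent series. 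This Mercer route makes the interchange completely explicit---for each $K_n$ the swap is a finite sum of separable terms---whereas your Bochner/approximation sketch would need one more line to be airtight (e.g., continuity of $\vy\mapsto K(\cdot,\vy)$ into $\Hil^m(\Omega)$, which is exactly what the Mercer argument supplies). Both routes are equivalent in content; the paper's is simply the concrete instantiation of yours.
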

\begin{proof}
Using the same method as in the proof of
Proposition~\ref{p:W-eig-L-B-eig}, we can verify that $\langle
Le_p,\gamma \rangle_{\Omega}=\langle \lambda_p^{-1}e_p,\gamma
\rangle_{\Omega}$ for each $\gamma\in\DOmega$. This implies that
$Le_p=\lambda_p^{-1}e_p$, $p\in\NN$.

Next we compute their boundary conditions. Fix any boundary operator $B_j$, $j=1,\cdots,\nb$ and any eigenfunction $e_p$ and eigenvalue $\lambda_p$ of $K$, $p\in\NN$.
Because $K\in\Cont(\overline{\Omega}\times\overline{\Omega})$ is positive definite. According to Mercer's Theorem, there exist an orthonormal basis $\{\varphi_k\}_{k=1}^{\infty}$ of $\Leb_2(\Omega)$ and
a positive sequence $\{\nu_k\}_{k=1}^{\infty}$ such that $K(\vx,\vy)=\sum_{k=1}^{\infty}\nu_k\varphi_k(\vx)\varphi_k(\vy)$, $\vx,\vy\in\Omega$.
We can also check that
$\{\sqrt{\nu_k}\varphi_k\}_{k=1}^{\infty}$ is an orthonormal basis of $\HpbAOmega$. 
Let $K_n(\vx,\vy):=\sum_{k=1}^{n}\nu_k\varphi_k(\vx)\varphi_k(\vy)$, $n\in\NN$. Thus
$\norm{K(\cdot,\vy)-K_n(\cdot,\vy)}_{\HpbAOmega}^2=\sum_{k=n+1}^{\infty}\nu_k\abs{\varphi_k(\vy)}^2\rightarrow0$ when $n\rightarrow\infty$.
According to Theorem~\ref{t:HpbA}, $\HpbAOmega$ is embedded into $\Hil^m(\Omega)$, which implies that $\norm{K(\cdot,\vy)-K_n(\cdot,\vy)}_{m,\Omega}\rightarrow0$ when $n\rightarrow\infty$.
So $B_jK(\cdot,\vy)=\sum_{k=1}^{\infty}\nu_k(B_j\varphi_k)\varphi_k(\vy)$ and 
$(B_{j,\vx}K(\vx,\cdot),e_p)_{\Omega}=\sum_{k=1}^{\infty}\nu_k(B_j\varphi_k)(\vx)(\varphi_k,e_p)_{\Omega}$. It implies that
\[
\lambda_p(B_{j}e_p)(\vx)
=B_{j,\vx}(K(\vx,\cdot),e_p)_{\Omega}
=(B_{j,\vx}K(\vx,\cdot),e_p)_{\Omega}=(\Gamma_j(\vx,\cdot),e_p)_{\Omega},
\quad \vx\in\partial\Omega.
\]

It follows that the boundary conditions have the form $\vB e_p=\veta_p$ for all $p\in\NN$.

\qed
\end{proof}

\begin{Proposition}\label{p:L-B-eig-W-eig-non}
If $\{\mu_p\}_{p=1}^{\infty}\subset\RR^+$ and $\{e_p\}_{p=1}^{\infty}$ of $\Leb_2(\Omega)$ are the eigenvalues and eigenfunctions of
$L$ with boundary conditions given by $\vB$ and
\[
\Eset:=\{\veta_p:=(\mu_p\Integral_{\Gamma_1,\Omega}e_p,\cdots,\mu_p\Integral_{\Gamma_{\nb},\Omega}e_p)^T\}_{p=1}^{\infty},
\]
i.e., $\eta_{pj}(\vx)=\mu_p(\Gamma_j(\vx,\cdot),e_p)_{\Omega}$, $\vx\in\partial\Omega$, then $\{\mu_p^{-1}\}_{p=1}^{\infty}$ and $\{e_p\}_{p=1}^{\infty}$
are the eigenvalues and eigenfunctions
of $K$. Moreover, if $\{e_p\}_{p=1}^{\infty}$ is
an orthonormal basis of $\Leb_2(\Omega)$, then
\[
K(\vx,\vy)=\sum_{p=1}^{\infty}\mu_p^{-1}e_p(\vx)e_p(\vy),
\quad{}\vx,\vy\in\Omega.
\]
\end{Proposition}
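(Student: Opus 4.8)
The plan is to imitate the proof of Proposition~\ref{p:L-B-eig-W-eig}, the only genuinely new ingredient being the recovery of the prescribed boundary data $\veta_p$ out of the splitting $K=G+R$. Put $w_p:=\Integral_{K,\Omega}e_p$. Since $G$ is the reproducing kernel of $\HpOmega$ and lies in $\Leb_2(\Omega\times\Omega)$, Proposition~\ref{p:RKHS-Ltwo} gives $\Integral_{G,\Omega}e_p\in\HpOmega$, while the direct computation $\Integral_{R,\Omega}e_p=\sum_{k=1}^{\na}a_k(\psi_k,e_p)_\Omega\,\psi_k$ shows $\Integral_{R,\Omega}e_p\in\HbAOmega$; as $\Integral_{K,\Omega}=\Integral_{G,\Omega}+\Integral_{R,\Omega}$, uniqueness of the decomposition in $\HpbAOmega=\HpOmega\oplus\HbAOmega$ forces $(w_p)_P=\Integral_{G,\Omega}e_p$ and $(w_p)_B=\Integral_{R,\Omega}e_p$, and in particular $w_p\in\HpbAOmega\subseteq\Hil^m(\Omega)$.

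The first step is to show $Lw_p=e_p$ in $\Omega$. For $\gamma\in\DOmega$ one has $\langle Lw_p,\gamma\rangle_\Omega=(w_p,\gamma)_{\vP,\Omega}$ exactly as in the proof of Theorem~\ref{t:RKHS-HpOmega}; since $(w_p)_B\in\HbAOmega\subseteq\Null(L)$, the orthogonality relation established inside the proof of Theorem~\ref{t:HpbA} removes the $B$-part, so this equals $((w_p)_P,\gamma)_{\HpOmega}=(\Integral_{G,\Omega}e_p,\gamma)_{\HpOmega}$, and the adjoint property of $\Integral_{G,\Omega}$ from Proposition~\ref{p:RKHS-Ltwo} turns it into $(e_p,\gamma)_\Omega=\langle e_p,\gamma\rangle_\Omega$. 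Hence $Lw_p=e_p$. The second step is to compute $\vB w_p$: since $\Integral_{G,\Omega}e_p\in\HpOmega$ we have $\vB\,\Integral_{G,\Omega}e_p=\v0$, so $\vB w_p=\vB\,\Integral_{R,\Omega}e_p$; with $\Integral_{R,\Omega}e_p=\sum_k a_k(\psi_k,e_p)_\Omega\,\psi_k$ and the identity $\Gamma_j(\vx,\vy)=\sum_k a_k(B_j\psi_k)(\vx)\psi_k(\vy)$ derived just before Proposition~\ref{p:W-eig-L-B-eig-non}, linearity of $B_j$ gives $B_j\,\Integral_{R,\Omega}e_p=\sum_k a_k(\psi_k,e_p)_\Omega\,B_j\psi_k=\mu_p^{-1}\eta_{pj}$ by the very definition of $\Eset$, i.e. $\vB w_p=\mu_p^{-1}\veta_p$.

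Combining the two steps, $L(w_p-\mu_p^{-1}e_p)=e_p-\mu_p^{-1}(\mu_p e_p)=0$ in $\Omega$ and $\vB(w_p-\mu_p^{-1}e_p)=\mu_p^{-1}\veta_p-\mu_p^{-1}\veta_p=\v0$ on $\partial\Omega$, so Lemma~\ref{l:P-B-Eqn} forces $w_p=\mu_p^{-1}e_p$. Therefore $e_p=\mu_p w_p\in\HpbAOmega$ and, for every $\vy\in\Omega$, $(K(\cdot,\vy),e_p)_\Omega=(\Integral_{K,\Omega}e_p)(\vy)=w_p(\vy)=\mu_p^{-1}e_p(\vy)$, which is exactly the statement that $\mu_p^{-1}$ and $e_p$ are an eigenvalue and eigenfunction of $K$. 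If moreover $\{e_p\}_{p=1}^\infty$ is an orthonormal basis of $\Leb_2(\Omega)$, then the spectral expansion of the integral operator of the symmetric positive definite continuous kernel $K$ (Mercer's theorem, already invoked for $K$ in the proof of Proposition~\ref{p:W-eig-L-B-eig-non}) yields $K(\vx,\vy)=\sum_{p=1}^\infty\mu_p^{-1}e_p(\vx)e_p(\vy)$, just as in the final line of Proposition~\ref{p:L-B-eig-W-eig}.

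I expect the second step --- matching $\vB w_p$ to the prescribed data $\veta_p$ --- to be the main obstacle, since it is the only place where the exact form of the admissible boundary data $\Eset$ in the hypothesis is used; it relies on the vanishing of the $G$-contribution to $\vB w_p$ (because $\Integral_{G,\Omega}e_p\in\HpOmega$), on the finite rank of $R$ so that $\vB$ passes termwise through the integral, and on being careful about which spatial argument each $B_j$ acts on inside $\Gamma_j$. Everything else is a transcription of the homogeneous case combined with the direct-sum structure of $\HpbAOmega$.
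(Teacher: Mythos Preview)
Your argument is correct. The route, however, differs from the paper's. You work with $w_p:=\Integral_{K,\Omega}e_p$, split it as $\Integral_{G,\Omega}e_p+\Integral_{R,\Omega}e_p\in\HpOmega\oplus\HbAOmega$, verify $Lw_p=e_p$ (via Proposition~\ref{p:RKHS-Ltwo}) and $\vB w_p=\mu_p^{-1}\veta_p$, and then invoke the uniqueness Lemma~\ref{l:P-B-Eqn} on $w_p-\mu_p^{-1}e_p$. The paper instead splits $e_p$ itself: it sets $v_p:=\mu_p\,\Integral_{R,\Omega}e_p\in\HbAOmega$, checks directly that $u_p:=e_p-v_p$ satisfies $Lu_p=\mu_p e_p$ and $\vB u_p=\v0$ (so $u_p\in\HpOmega$), and then computes $(K(\cdot,\vy),e_p)_\Omega=(G(\cdot,\vy),e_p)_\Omega+(R(\cdot,\vy),e_p)_\Omega=\mu_p^{-1}u_p(\vy)+\mu_p^{-1}v_p(\vy)$ using the reproducing property of $G$ on $u_p$. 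Both arguments rely on the same structural facts (the finite rank of $R$, the identity $\Gamma_j(\vx,\vy)=\sum_k a_k(B_j\psi_k)(\vx)\psi_k(\vy)$, and the homogeneous boundary data of $G$), but the paper's version is a bit more direct since it never needs to compute $Lw_p$ or appeal to Lemma~\ref{l:P-B-Eqn}; your version has the virtue of making the role of well-posedness of the boundary value problem explicit.
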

\begin{proof}
We fix any $p\in\NN$. Let $v_p(\vy):=\mu_p(R(\cdot,\vy),e_p)_{\Omega}=\mu_p\sum_{k=1}^{\na}a_k(\psi_k,e_p)_{\Omega}\psi_k(\vy)$,
$\vy\in\Omega$. Then $Lv_p=0$ and $\vB v_p=\veta_p$ because $\vB K(\cdot,\vy)=\vB R(\cdot,\vy)$ for each $\vy\in\Omega$.

Define $u_p:=e_p-v_p$, so that $Lu_p=Le_p=\mu_pe_p$ and $\vB u_p=\vB e_p-\vB u_p=0$ which implies that $u_p\in\HpOmega$.
As in Proposition~\ref{p:L-B-eig-W-eig}, we can obtain that
\[
(G(\cdot,\vy),\mu_p
e_{p})_{\Omega}=(G(\cdot,\vy),Lu_p)_{\Omega}=(G(\cdot,\vy),u_p)_{\HpOmega}=
u_p(\vy),\quad{}\vy\in\Omega.
\]
It follows from the above discussion that
\[
(K(\cdot,\vy),e_p)_{\Omega}=(G(\cdot,\vy),e_p)_{\Omega}+(R(\cdot,\vy),e_p)_{\Omega}
=\mu_p^{-1}u_p(\vy)+\mu_p^{-1}v_p(\vy)=\mu_p^{-1}e_p(\vy),\quad{}\vy\in\Omega.
\]

$\qed$
\end{proof}

Given a function $f\in\Hil^m(\Omega)$, we also want to know whether
$f$ belongs to the reproducing kernel Hilbert space $\HpbAOmega$
as used in Theorem~\ref{t:RKHS-HpbAOmega}. According to
Corollary~\ref{c:HpbOmega}, $f$ can be uniquely decomposed into
$f=f_P+f_B$, where $f_P\in\HpOmega$ and $f_B\in\Null(L)$.
Theorem~\ref{t:HpbA} shows that $f\in\HpbAOmega$ if and only if
$f_B\in\HbAOmega$. Moreover, $f_B\in\HbAOmega$ if and only if
$\sum_{k=1}^{\na}a_k^{-1}\abs{\hat{f}_k}^2<\infty$, where
$\hat{f}_k:=(f,\psi_k)_{\vB,\partial\Omega}$ for each $k\in\NN$.

Because $\sum_{k=1}^{\na}a_k\norm{\psi_k}_{m,\Omega}^2<\infty$. We can set
$\Psi_j(\vx,\vy):=B_{j,\vx}B_{j,\vy}R(\vx,\vy)$, $\vx,\vy\in\partial\Omega$ and $j=1,\cdots,\nb$. Then
$\Psi_j(\vx,\vy)=\sum_{k=1}^{\na}a_k(B_j\psi_k)(\vx)(B_j\psi_k)(\vy)$
which implies that each $\Psi_j$ is symmetric
positive semi-definite on $\partial\Omega$. So $\Psi_j$ is the reproducing
kernel of a reproducing-kernel Hilbert space
$\Hilbert_j(\partial\Omega)$ by \cite[Theorem~1.3.3]{BerThAg04}.
According to \cite[Theorem~10.29]{Wen05}, we have
$\sum_{k=1}^{\na}a_k^{-1}\abs{\hat{f}_k}^2<\infty$ if and only if
$B_jf\in\Hilbert_j(\partial\Omega)$, $j=1,\cdots,\nb$.

\begin{theorem}\label{t:f-in-HpbAOmega}
Let $\Psi_j(\vx,\vy):=B_{j,\vx}B_{j,\vy}R(\vx,\vy)$, $\vx,\vy\in\partial\Omega$ and $j=1,\cdots,\nb$.
Use
$\Hilbert_j(\partial\Omega)$ to denote the reproducing-kernel Hilbert space
whose reproducing kernel is $\Psi_j$.
Then a function $f\in\Hil^m(\Omega)$ belongs to $\HpbAOmega$ if and
only if $B_jf\in\Hilbert_j(\partial\Omega)$ for each
$j=1,\cdots,\nb$.
\end{theorem}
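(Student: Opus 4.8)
The plan is to reduce the claim to the one-space-per-boundary-operator statement already packaged in Proposition~\ref{p:RKHS-Ltwo} (and its consequence \cite[Theorem~10.29]{Wen05}), exactly as the paragraph immediately preceding the theorem sketches. Concretely, I would start from the decomposition $f=f_P+f_B$ with $f_P\in\HpOmega$ and $f_B\in\Null(L)$ guaranteed by Corollary~\ref{c:HpbOmega}, and record that $\vB f_P=\v0$, so $B_jf=B_jf_B$ for each $j$. Then by Theorem~\ref{t:HpbA} one has $f\in\HpbAOmega$ if and only if $f_B\in\HbAOmega$, and since $f_B\in\Null(L)$ is characterized by its $\vB$-Fourier coefficients $\hat f_k:=(f,\psi_k)_{\vB,\partial\Omega}=(f_B,\psi_k)_{\vB,\partial\Omega}$, Definition~\ref{d:HbAOmega} gives $f_B\in\HbAOmega$ if and only if $\sum_{k=1}^{\na}a_k^{-1}\abs{\hat f_k}^2<\infty$. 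So the whole theorem comes down to the single equivalence
\[
\sum_{k=1}^{\na}a_k^{-1}\abs{\hat f_k}^2<\infty
\iff B_jf\in\Hilbert_j(\partial\Omega)\text{ for each }j=1,\cdots,\nb.
\]

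\textbf{Next} I would verify the hypotheses needed to invoke the $\Leb_2$-theory of reproducing kernels on each $\Psi_j$. First, $\sum_{k=1}^{\na}a_k\norm{\psi_k}_{m,\Omega}^2<\infty$ (trivial in the finite-$\na$ case of this paper, but stated so the argument survives the countable extension of Remark~\ref{r:countableAset}), which lets us expand $\Psi_j(\vx,\vy)=\sum_{k=1}^{\na}a_k(B_j\psi_k)(\vx)(B_j\psi_k)(\vy)$ as a convergent series of bounded functions on the compact set $\partial\Omega$, so $\Psi_j$ is a well-defined symmetric positive semi-definite kernel; by \cite[Theorem~1.3.3]{BerThAg04} it has a reproducing-kernel Hilbert space $\Hilbert_j(\partial\Omega)$. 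Then I would apply \cite[Theorem~10.29]{Wen05} (the Mercer-type membership criterion): a function $h$ on $\partial\Omega$ lies in $\Hilbert_j(\partial\Omega)$ precisely when the sum of squared generalized Fourier coefficients of $h$ against the spectral data of $\Psi_j$, weighted by reciprocal eigenvalues, is finite. For $h=B_jf=B_jf_B=\sum_{k}\hat f_k(B_j\psi_k)$ that weighted sum is, up to the bookkeeping of how the $\{B_j\psi_k\}_k$ decompose into $\Psi_j$'s orthonormal eigensystem, controlled by $\sum_k a_k^{-1}\abs{\hat f_k}^2$.

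\textbf{The delicate step} — and the one I would spell out most carefully — is handling the fact that the vectors $\{B_j\psi_k\}_{k=1}^{\na}$ need not be orthogonal in $\Leb_2(\partial\Omega)$, so $\Psi_j$'s Mercer eigenfunctions are not simply the $B_j\psi_k$ normalized. One clean way around this: since $\{\psi_k\}_{k=1}^{\na}$ is orthonormal in the \emph{full} $\vB$-semi-inner product, $\sum_{j=1}^{\nb}(B_j\psi_k,B_j\psi_l)_{\partial\Omega}=\delta_{kl}$, so summing the finiteness condition over $j$ recovers exactly $\sum_k a_k^{-1}\abs{\hat f_k}^2$. Thus I would prove the two implications by summing over $j$: if $B_jf\in\Hilbert_j(\partial\Omega)$ for every $j$, add the corresponding (finite) norm-squared quantities and use the $\vB$-orthonormality to bound $\sum_k a_k^{-1}\abs{\hat f_k}^2$ from above; conversely, if $\sum_k a_k^{-1}\abs{\hat f_k}^2<\infty$ then each individual $j$-term is dominated by it, and one checks $B_jf$ satisfies the \cite[Theorem~10.29]{Wen05} criterion for $\Psi_j$. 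The main obstacle is thus purely this orthogonality bookkeeping — matching the abstract spectral membership test applied kernel-by-kernel against the globally-coupled orthonormality of $\Aset$ — rather than any analytic difficulty; everything else (continuity/boundedness of $B_j$, embedding of $\HpbAOmega$ into $\Hil^m(\Omega)$, convergence of the series for $\Psi_j$) is supplied verbatim by Theorems~\ref{t:HpbA}, \ref{t:RKHS-HpbAOmega} and the boundary-operator estimates of Section~\ref{s:Bound}.
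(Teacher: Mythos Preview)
Your proposal is correct and follows essentially the same route as the paper: the paragraph immediately preceding the theorem \emph{is} the paper's proof, and you have reproduced it step for step (decompose $f=f_P+f_B$ via Corollary~\ref{c:HpbOmega}, reduce to $f_B\in\HbAOmega$ via Theorem~\ref{t:HpbA}, rewrite this as the weighted-coefficient condition, then invoke \cite[Theorem~1.3.3]{BerThAg04} and \cite[Theorem~10.29]{Wen05}). Your explicit treatment of the ``delicate step'' --- summing over $j$ to exploit the global $\vB$-orthonormality $\sum_{j}(B_j\psi_k,B_j\psi_l)_{\partial\Omega}=\delta_{kl}$ when the individual families $\{B_j\psi_k\}_k$ are not $\Leb_2(\partial\Omega)$-orthogonal --- actually fills in a point the paper's sketch leaves implicit in its bare citation of \cite[Theorem~10.29]{Wen05}.
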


\begin{remark}
In Remark~\ref{r:countableAset} we mentioned that the nonhomogeneous boundary conditions discussed in the present paper can be generalized to such that are generated by a countable set $\Aset$. One will also want to know which Green kernels associated with such nonhomogeneous boundary conditions are reproducing kernels. In the thesis~\cite{Ye12} it is shown that,
e.g., a Green kernel $\Phi\in\Hil^{m,m}(\Omega\times\Omega)$ is a reproducing kernel if and only if $B_{j,\vx}B_{j,\vy}\Phi$ is positive semi-definite on $\partial\Omega$ for each $j=1,\cdots,\nb$.
This Green kernel can then be expanded as the sum of eigenvalues and eigenfunctions analogous to Propositions~\ref{p:W-eig-L-B-eig-non} and~\ref{p:L-B-eig-W-eig-non}.
This allows us to approximate the interpolant $s_{f,X}$ by a truncated expansion of the
Green kernel.
\end{remark}


\section{Examples}\label{s:exa}


\begin{example}[Modifications of the Min Kernel]\label{exa_Min}
Let
\[
\Omega:=(0,1),\quad{}\vP:=\frac{d}{dx},\quad{}L:=P^{\ast}_1P_1=-\frac{d^2}{dx^2},\quad{}\vB:=I|_{\partial\Omega}=I|_{\{0,1\}}.
\]
It is easy to check that $\vP\in\mathscr{P}_{\Omega}^{1}$ and $\vB\in\mathscr{B}_{\Omega}^{1}$, where $\Order(\vP)=\Order(\vB)+1=1>1/2$.
We can calculate the Green kernel $G$ of $L$ with homogeneous boundary
conditions given by $\vB$, i.e.,
\[
G(x,y):= \min\{x,y\}-xy,\quad{}x,y\in\Omega.
\]
This Green kernel $G$ is also known to be the covariance kernel of the Brownian bridge.
According to Theorem~\ref{t:RKHS-HpOmega}, $G$ is the reproducing
kernel of the reproducing-kernel Hilbert space
\[
\HpOmega=\left\{f\in\Hil^1(\Omega):f(0)=f(1)=0\right\}\cong\Hil_0^1(\Omega),
\]
with the inner product
\[
(f,g)_{\HpOmega}=(f,g)_{\vP,\Omega}=(f',g')_{\Omega}=\int_0^1f'(x)g'(x)\ud
x,\quad{}f,g\in\HpOmega.
\]

In order to obtain a second, related, kernel we consider the same differential operator
with a different set of \emph{nonhomogeneous} boundary conditions.
One of the obvious orthonormal subsets of $\Null(L)=\Span\{\psi_1,\psi_2\}$ with respect to the $\vB$-semi-inner product is given by
\[
\psi_1(x):=x,\quad{}\psi_2(x):=1-x,\quad{}x\in\Omega,
\]
and we can further obtain that
\[
\hat{f}_1:=(f,\psi_1)_{\vB,\partial\Omega}=f(1),\quad{}
\hat{f}_2:=(f,\psi_2)_{\vB,\partial\Omega}=f(0),\quad{}f\in\Hil^1(\Omega).
\]
We will choose the nonnegative coefficients
\[
a_1:=1,\quad{}a_2:=0,
\]
to set up the pair $\Aset:=\{\psi_k;a_k\}_{k=1}^2$. According to
Theorems~\ref{t:RKHS-HbAOmega} and~\ref{t:RKHS-HpbAOmega}, the
covariance kernel of the standard Brownian motion
\[
K(x,y)=G(x,y)+R(x,y)=G(x,y)+a_1\psi_1(x)\psi_1(y)
=\min\{x,y\},\quad{}x,y\in\Omega,
\]
is the reproducing kernel of the reproducing-kernel Hilbert space
\[
\HpbAOmega=\HpOmega\oplus\HbAOmega=\HpOmega\oplus
\Span\{\psi_1\}=\{f\in\Hil^1(\Omega):f(0)=0\},
\]
with the inner product
\[
(f,g)_{\HpbAOmega}=(f,g)_{\vP,\Omega}
+\frac{\hat{f}_1\hat{g}_1}{a_1}
-\hat{f}_1\hat{g}_1(\psi_1,\psi_1)_{\vP,\Omega}=\int_0^1f'(x)g'(x)\ud
x,\quad{}f,g\in\HpbAOmega.
\]

If we select another pair $\Aset$, i.e.,
\[
\psi_1(x):=\frac{\sqrt{2}}{2},\quad{}\psi_2(x):=\sqrt{2}x-\frac{\sqrt{2}}{2},\quad{}
a_1:=1,\quad{}a_2:=0,
\]
then we can deal with \emph{periodic} boundary conditions.
Thus we obtain the reproducing-kernel Hilbert space
\[
\HpbAOmega=\HpOmega\oplus\Null(\vP)=\HpOmega\oplus\Span\{\psi_1\}=\{f\in\Hil^1(\Omega):f(0)=f(1)\}
\]
equipped with the inner product
\[
(f,g)_{\HpbAOmega}=(f,g)_{\vP,\Omega}+(f,g)_{\vB,\partial\Omega}=\int_0^1f'(x)g'(x)\ud
x+f(0)g(0)+f(1)g(1),
\]
whose reproducing kernel has the form
\[
K(x,y):=G(x,y)+a_1\psi_1(x)\psi_1(y)=
\min\{x,y\}-xy+\frac{1}{2},\quad{}x,y\in\Omega.
\]

\end{example}

\begin{example}[Univariate Sobolev Splines]\label{exa_Sobolev}
Let $\sigma$ be a positive scaling parameter and
\[
\Omega:=(0,1),\quad{}\vP:=(\frac{d}{dx},\sigma I)^T,
\quad{}L_{\sigma}:=\sum_{j=1}^2P^{\ast}_jP_j=-\frac{d^2}{dx^2}+\sigma^2
I,\quad{}\vB:=I|_{\partial\Omega}.
\]
Then $\vP\in\mathscr{P}_{\Omega}^{1}$ and $\vB\in\mathscr{B}_{\Omega}^{1}$. So the Green kernel $G_{\sigma}$ of $L_\sigma$ with
homogeneous boundary conditions given by $\vB$ has the form
\[
G_{\sigma}(x,y):=
\begin{cases}
\frac{1}{\sigma\sinh(\sigma)}\sinh(\sigma x)\sinh(\sigma-\sigma y),&0<x\leq y<1,\\
\frac{1}{\sigma\sinh(\sigma)}\sinh(\sigma-\sigma x)\sinh(\sigma
y),&0<y\leq x<1.
\end{cases}
\]

Using the same approach as in Example~\ref{exa_Min} we can pick an orthonormal bases of $\Null(L)$ with respect to the $\vB$-semi-inner product as
\[
\begin{split}
&\psi_1(x):=\frac{\exp(\sigma-\sigma
x)}{\sqrt{2}\left(\exp(\sigma)-1\right)}-\frac{\exp(\sigma
x)}{\sqrt{2}\left(\exp(\sigma)-1\right)},\\
&\psi_2(x):=\frac{\exp(\sigma-\sigma
x)}{\sqrt{2}\left(\exp(\sigma)+1\right)}+\frac{\exp(\sigma
x)}{\sqrt{2}\left(\exp(\sigma)+1\right)},
\end{split}
\]
and then compute
\[
\hat{f}_1:=\left(f,\psi_1\right)_{\vB,\partial\Omega}=\frac{1}{\sqrt{2}}\left(f(0)-f(1)\right),\quad
\hat{f}_2:=\left(f,\psi_2\right)_{\vB,\partial\Omega}=\frac{1}{\sqrt{2}}\left(f(0)+f(1)\right).
\]
We further choose the positive sequence
\[
a_1:=\frac{\exp(\sigma)-1}{2\sigma\exp(\sigma)},\quad{}a_2:=\frac{\exp(\sigma)+1}{2\sigma\exp(\sigma)}.
\]
According to Theorem~\ref{t:RKHS-HpbAOmega},
\[
K(x,y)=G_{\sigma}(x,y)+R(x,y)=G_{\sigma}(x,y)+\sum_{k=1}^2a_k\psi_k(x)\psi_k(y)
=\frac{1}{2\sigma}\exp\left(-\sigma\abs{x-y}\right)
\]
is the reproducing kernel of the reproducing-kernel Hilbert space
$\HpbAOmega\cong\Hil^1(\Omega)$ with the inner-product
\[
\left(f,g\right)_{\HpbAOmega}=\int_0^1f'(x)g'(x)\ud
x+\sigma^2\int_0^1f(x)g(x)\ud x+2\sigma f(0)g(0)+2\sigma f(1)g(1).
\]

\end{example}
%
\begin{remark}
Roughly speaking, the differential operator $L_{\sigma}=-\frac{d^2}{dx^2}+\sigma^2
I$ converges to the operator $L=-\frac{d^2}{dx^2}$ from Example~\ref{exa_Min} when $\sigma\rightarrow0$. We also
observe that the homogeneous Green kernel $G_{\sigma}$ of
$L_{\sigma}$ converges uniformly to the homogeneous Green kernel $G$
of $L$ when $\sigma\rightarrow0$. This matter is discussed in detail for radial kernels of even smoothness orders in the paper~\cite{FasHicRidSong10}. One might hope to exploit this limiting behavior to stabilize the positive definite interpolation matrix corresponding to $G_\sigma$ when $\sigma$ is small by augmenting the matrix with polynomial blocks that correspond to the better-conditioned limiting kernel $G$.
\end{remark}


\begin{example}[Modifications of Thin Plate Splines]\label{exa:ThinPlate-2D}
Let $\Omega:=(0,1)^2\subset\RR^2$ and
\[
\vP:=(\frac{\partial^2}{\partial
x_1^2},~\sqrt{2}\frac{\partial^2}{\partial x_1\partial
x_2},~\frac{\partial^2}{\partial x_2^2})^T,\quad
\vB:=(\frac{\partial}{\partial
x_1}|_{\partial\Omega},~\frac{\partial}{\partial
x_2}|_{\partial\Omega},
~I|_{\partial\Omega})^T.
\]
which shows that $\vP\in\mathscr{P}_{\Omega}^2$ and
$\vB\in\mathscr{B}_{\Omega}^2$. Thus we can compute that
\[
L:=\sum_{j=1}^3P_j^{\ast}P_j=\Delta^2.
\]

We know that the fundamental solution of $L$ is given by
\[
\phi(\vx):=\frac{1}{8\pi}\norm{\vx}_2^2\log\norm{\vx}_2,\quad{}\vx\in\RR^2,
\]
i.e., $L\phi=\delta_0$ in $\RR^2$. Applying Green's formulas, we can
find a corrector function $\phi^{\vy}\in\Hil^2(\Omega)$ for each
fixed $\vy\in\Omega$ by solving
\[
\begin{cases}
L\phi^{\vy}=\Delta^2\phi^{\vy}=0,&\text{in }\Omega,\\
\vB\phi^{\vy}=\vGamma(\cdot,\vy),&\text{on }\partial\Omega,
\end{cases}
\]
where $\Gamma_1(\vx,\vy):=\frac{1}{8\pi}(2\log\norm{\vx-\vy}_2+1)(x_1-y_1)$, $\Gamma_2(\vx,\vy):=\frac{1}{8\pi}(2\log\norm{\vx-\vy}_2+1)(x_2-y_2)$
and $\Gamma_3(\vx,\vy):=\frac{1}{8\pi}\norm{\vx-\vy}_2^2\log\norm{\vx-\vy}_2$.
Since $\vGamma(\vx,\vy)=\vB_{\vx}\phi(\vx-\vy)$ for each $\vx\in\partial\Omega$ and $\vy\in\Omega$, the kernel
$G(\vx,\vy):=\phi(\vx-\vy)-\phi^{\vy}(\vx)$ defined in $\Omega\times\Omega$ is a
Green kernel of $L$ with homogeneous boundary conditions given by $\vB$.

Since $\Null(\vP)=\pi_1(\Omega)$, the space of linear polynomials on $\Omega$, we can obtain an
orthonormal basis of $\pi_1(\Omega)$ with respect to the $\vB$-semi-inner product as
\[
\psi_1(\vx):=\frac{1}{2},\quad{}\psi_2(\vx):=\sqrt{\frac{3}{29}}(x_1-2),
\quad{}\psi_3(\vx):=\sqrt{\frac{3}{29}}(x_2-2),\quad{}\vx:=(x_1,x_2)\in\Omega.
\]
We choose positive coefficients $\{a_k\}_{k=1}^{3}$ as $a_1=a_2=a_3:=1$.
Thus $R(\vx,\vy):=\sum_{k=1}^3a_k\psi_k(\vx)\psi_k(\vy)$.
According to Theorems~\ref{t:HpbA} and \ref{t:RKHS-HpbAOmega}, the
Green kernel
\[
K(\vx,\vy):=G(\vx,\vy)+R(\vx,\vy),\quad{}\vx,\vy\in\Omega,
\]
is the reproducing kernel of the reproducing-kernel Hilbert space
$\HpbAOmega=\Hil_0^m(\Omega)\oplus\pi_1(\Omega)$ and its inner-product has the form
\[
(f,g)_{\HpbAOmega}:=(f,g)_{\vP,\Omega}+(f,g)_{\vB,\partial\Omega},\quad{}
f,g\in\HpbAOmega.
\]

\cite[Chapters~10 and 11]{Wen05} state that the native space $\mathcal{N}_{\phi}(\Omega)$ of the thin plate spline $\phi$ covers the Sobolev space $\Hil^2(\Omega)$. Therefore $\HpbAOmega\varsubsetneqq\Hil^2(\Omega)\subseteq\mathcal{N}_{\phi}(\Omega)$.

\end{example}
%
\begin{remark}
We can also introduce other $d$-dimensional examples that connect Green kernels with, e.g., pdLg splines~\cite{FigChen90} or Sobolev splines~\cite{FasYe10}. A pdLg spline is given by a linear combination of the homogeneous Green kernel centered at the data sites from $X$. Thus it provides the $\vP$-semi-norm-optimal solution of the scattered data interpolation problem.
According to Example~5.7 of \cite{FasYe10}, the Mat\'ern function (or Sobolev spline) $\phi_{m,\sigma}$ of order $m>d/2$ with shape parameter $\sigma>0$ can be identified with the
kernel $\Phi_{m,\sigma}(\vx,\vy)=\phi_{m,\sigma}(\vx-\vy)$ which is a \emph{full-space} Green kernel of the differential operator $L:=(\Delta-\sigma I)^m$. If we add nonhomogeneous boundary conditions to $L$ then the finite set $\Aset$ used in the present paper does not allow us to discuss the resulting Green kernel $\Phi_{m,\sigma}$ and to check whether it is a reproducing kernel in a regular bounded open domain $\Omega$. This is done in the thesis~\cite{Ye12} where it is shown that for each $\sigma$ the reproducing-kernel Hilbert space associated with $\Phi_{m,\sigma}$ is equivalent to the Sobolev space $\Hil^m(\Omega)$. However, different shape parameters $\sigma$ allow us to choose a specific norm for $\Hil^m(\Omega)$ that reflects the relative influence of various derivatives in the data.
\end{remark}


\section{Acknowledgements}

The second author would like to express his gratitude to Dr.~K.~E.~Atkinson, who hosted Q. Ye at the University of
Iowa and provided valuable suggestions that allowed us to make significant improvements to this paper.




\end{document}